\newcommand{\mylabel}[2]{#2\def\@currentlabel{#2}\label{#1}}
\newcommand{\bsm}{\left(\begin{smallmatrix}}
	\newcommand{\esm}{\end{smallmatrix}\right)}
\newenvironment{customthm}[1]
{\innercustomthm}
{\endinnercustomthm}
\numberwithin{equation}{section}
\newtheorem{theorem}[equation]{Theorem}
\newtheorem{corollary}[equation]{Corollary}
\newtheorem{lemma}[equation]{Lemma}
\newtheorem{proposition}[equation]{Proposition}
\theoremstyle{definition}
\newtheorem{definition}[equation]{Definition}
\newtheorem{remark}[equation]{Remark}
\newtheorem{construction}[equation]{Construction}
\newtheorem{notation}[equation]{Notation}
\newtheorem*{claim}{Claim}
\newtheorem*{claim*}{Claim}
\crefname{notation}{Notation}{Notation}
\crefname{convention}{Convention}{Conventions}
\crefname{theorem}{Theorem}{Theorems}
\crefname{proposition}{Proposition}{Propositions}
\crefname{corollary}{Corollary}{Corollaries}
\crefname{definition}{Definition}{Definitions}
\crefname{lemma}{Lemma}{Lemmas}
\crefname{question}{Question}{Questions}
\crefname{example}{Example}{Examples}
\crefname{conjecture}{Conjecture}{Conjectures}
\crefname{remark}{Remark}{Remarks}
\crefname{const}{Construction}{Constructions}
\newcommand{\Z}{\mathbb{Z}}
\newcommand{\Hom}{\operatorname{Hom}}
\newcommand{\proj}{\operatorname{proj}}
\newcommand{\aug}{\operatorname{aug}}
\newcommand{\incl}{\operatorname{incl}}
\newcommand{\id}{\operatorname{id}}
\newcommand{\ev}{\operatorname{ev}}
\newcommand{\im}{\operatorname{im}}
\newcommand{\wt}{\widetilde}
\newcommand{\wh}{\widehat}
\DeclareMathOperator{\Res}{Res}
\definecolor{bettergreen}{rgb}{0.0, 0.5 0.0}
\begin{document}
	\title{The first relative~$k$-invariant}
	
	\author[A.~Conway]{Anthony Conway}
	\address{The University of Texas at Austin, Austin TX}
	%%%
	\email{anthony.conway@austin.utexas.edu}
	\author[D.~Kasprowski]{Daniel Kasprowski}
	\address{University of Southampton, United Kingdom}
	\email{d.kasprowski@soton.ac.uk }
	
	\maketitle
	
\begin{abstract}
Motivated by work on the homotopy classification of~$4$-manifolds with boundary,  we define a relative~$k$-invariant for pairs of spaces that are homotopy equivalent to CW pairs.
We show that for such a pair $(X,Y)$ with Postnikov $2$-type $X \to P_2(X)$, the relative $k$-invariant is the obstruction to the existence of a section~$B\pi_1(X)\to P_2(X)$ extending~$Y \hookrightarrow X \to P_2(X)$.
Given CW pairs~$(X_0,Y_0)$ and~$(X_1,Y_1)$,  as well as a map~$h \colon Y_0 \to Y_1$, we also prove that relative~$k$-invariants provide a complete obstruction to constructing a map~$X_0^{(3)} \cup Y_0 \to X_1$ that extends~$h$ and induces given isomorphisms on~$\pi_1$ and~$\pi_2$.
\end{abstract}	
	
\section{Introduction}

Given a space~$X$ with Postnikov tower~$\ldots \to P_2(X) \to B\pi_1(X)$,  the (first) \emph{$k$-invariant of~$X$} is the obstruction~$k_X \in H^3(B\pi_1(X);\pi_2(X))$ to finding a section of~$P_2(X) \to B\pi_1(X)$.
This invariant,  first introduced in~\cite{EilenbergMacLane-operators}, is a mainstay in algebraic topology, but 
also plays a role in geometric topology and, more specifically,  in the (homotopy) classification of manifolds.
For example,  the homotopy type of a closed oriented~$4$-manifold~$X$ with finite fundamental group is often determined by its quadratic~$2$-type~$(\pi_1(X),\pi_2(X),k_X,\lambda_X)$, where~$\lambda_X$ denotes the equivariant intersection form of~$X$; see e.g.~\cite{HambletonKreck}.
Motivated by upcoming work on the homotopy classification of~$4$-manifolds with nonempty boundary~\cite{ConwayKasprowski4Manifolds}, the present article is concerned with relative~$k$-invariants and collects several results on the topic that might be known to experts but that we have not been able to find in the literature.

\subsection*{Statement of results}

Let~$(X,Y)$ be a pair of spaces that is homotopy equivalent to a~CW pair,  let $c \colon X \to P_2(X)$ be the Postnikov $2$-type of $X$, and given a map~$\nu \colon X \to B\pi_1(X)$,  we write~$M(\nu|_Y)$ for the mapping cylinder of~$\nu|_Y$.
The relative~$k$-invariant refers to a cohomology~class
$$k_{X,Y}^\nu\in H^3(M(\nu|_Y),Y;\pi_2(X)).$$
The definition of this class is described precisely in Section~\ref{sec:kInvariantNotCW}, but for the moment we note that~$k_{X,Y}^\nu$ is the obstruction to the existence of a dashed map
making the following diagram commute:
\[\begin{tikzcd}
	Y\ar[r,"c|_Y"]\ar[d,"\nu|_Y"']&P_2(X)\ar[d]\\
	B\pi_1(X)\ar[r,"\id"]\ar[ur,dashed]&B\pi_1(X).
\end{tikzcd}\]
Up to isomorphism, $k_{X,Y}^\nu$ is independent of the choice of the map~$\nu$; see Remark~\ref{rem:Indepj}.
The reason for which we write $H^3(M(\nu|_Y),Y;\pi_2(X))$ instead of $H^3(B\pi_1(X),Y;\pi_2(X))$ is to make it apparent that we choose a model of~$B\pi_1(X)$ that contains $Y$ as a subspace.
Also, $k_{X,\emptyset}^\nu$ agrees with~$k_X$.

The following result 
(the absolute version of which can be found in~\cite{EilenbergMacLane-operators,MacLaneWhitehead})
plays a crucial role in upcoming work on the homotopy classification of~$4$-manifolds with boundary~\cite{ConwayKasprowski4Manifolds}. 

\begin{theorem}
	\label{thm:RealiseAlgebraic3Type-intro}
	Let~$(X_0,Y_0)$ and~$(X_1,Y_1)$ be pairs of spaces that are homotopy equivalent to~CW pairs, and let~$c_1\colon X_1\to P_2(X_1)$ be the Postnikov~$2$-type of~$X_1$.
For a map~$h \colon Y_0 \to Y_1$, an isomorphism~$u\colon \pi_1(X_0)\to \pi_1(X_1)$ with~$u\circ(\iota_0)_*= (\iota_1)_* \circ h$, and an~$u$-equivariant homomorphism~$F \colon \pi_2(X_0) \to~\pi_2(X_1)$, the following assertions are equivalent:
	\begin{itemize}
		\item there is a map~$c_0 \colon X_0\to P_2(X_1)$ such that 
		\[(c_0)_*=u \colon \pi_1(X_0)\to \pi_1(P_2(X_1)) \cong \pi_1(X_1),\quad c_1 \circ h\simeq c_0|_{Y_0},  \quad \text{and}\]
		\[ 
		(c_0)_*=F\colon \pi_2(X_0)\to \pi_2(P_2(X_1)) \xleftarrow{(c_1)_*,\cong}\pi_2(X_1);
		\]
		\item the relative~$k$-invariants satisfy
		$$(u,h)^*(k_{X_1,Y_1}^{\nu_1})=F_*(k_{X_0,Y_0}^{\nu_0}) \in H^3(M(\nu_0|_{Y_0}),Y_0;\Res_u \pi_2(X_1))$$
for every~$\nu_i \colon X_i \to B\pi_1(X_i)$ that induces the identity on $\pi_1$ for~$i=0,1$. 
The map~$(u,h)^*$ is defined in \cref{lem:InducedMapkInvariantsNotCW}.
	\end{itemize}
	\end{theorem}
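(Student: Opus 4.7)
The plan is to reduce both implications to the characterisation, built into the definition of $k_{X,Y}^\nu$, of the relative $k$-invariant as the Eilenberg obstruction to extending $c|_Y$ to a section of $P_2(X) \to B\pi_1(X)$.

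For $(1) \Rightarrow (2)$, I would start from a given $c_0$ with the prescribed properties. By the universal property of the Postnikov $2$-section, $c_0$ factors up to homotopy through a map $P_2(X_0) \to P_2(X_1)$ realising $u$ on $\pi_1$ and $F$ on $\pi_2$; together with $\nu_i$, $c_i$ and $h$ this assembles a morphism of the Postnikov-tower data from $(X_0,Y_0)$ to $(X_1,Y_1)$. Naturality of the obstruction class, as packaged in \cref{lem:InducedMapkInvariantsNotCW}, would then yield $(u,h)^*k_{X_1, Y_1}^{\nu_1} = F_*k_{X_0, Y_0}^{\nu_0}$.

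For $(2) \Rightarrow (1)$, my plan is an obstruction-theoretic construction on a CW model of $(X_0,Y_0)$, starting from the prescribed map $c_1\circ h$ on $Y_0$. Using the hypothesis $u \circ (\iota_0)_* = (\iota_1)_*\circ h$, I first extend over $X_0^{(1)}$ to a map $X_0^{(1)} \cup Y_0 \to P_2(X_1)$ realising $u$ on $\pi_1$; extension over $2$-cells presents no further obstruction because $\pi_1$ has been matched, and by exploiting the $\pi_2(P_2(X_1)) = \pi_2(X_1)$-action on the set of $2$-cell extensions I can arrange the induced map on $\pi_2(X_0)$ to coincide with $F$. The crucial step is extension over $3$-cells: the Eilenberg obstruction lies in $H^3(M(\nu_0|_{Y_0}), Y_0; \pi_2(X_1))$, and unwinding the definitions it equals exactly $(u,h)^*(k_{X_1,Y_1}^{\nu_1}) - F_*(k_{X_0,Y_0}^{\nu_0})$. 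The hypothesis makes this class vanish, yielding the extension to $X_0^{(3)}\cup Y_0$; all further obstructions are automatic since $\pi_n(P_2(X_1)) = 0$ for $n\geq 3$, so the construction extends to all of $X_0$.

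The main technical difficulty will be making the $3$-skeleton identification rigorous. The two relative $k$-invariants live in cohomology groups of mapping cylinders that depend on the choices of $\nu_i$, and to bring them into a common group and compare them at the cocycle level requires careful bookkeeping through $(u,h)^*$ and $F_*$, in particular verifying that the partial lift on $Y_0\cup X_0^{(2)}$ pulls back the cocycle defining $k_{X_1,Y_1}^{\nu_1}$ to the one defining $k_{X_0,Y_0}^{\nu_0}$ after the coefficient change by $F$. This identification is essentially the preliminary obstruction-theoretic statement announced in the abstract for maps $X_0^{(3)} \cup Y_0 \to X_1$; I would establish that first and then deduce the present theorem by composing with $c_1$ and using the vanishing of higher homotopy of $P_2(X_1)$ to extend over the remaining cells.
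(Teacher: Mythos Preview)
Your high-level architecture matches the paper's: both directions reduce to an intermediate statement about maps between (CW models of) the pairs that realise $u$, $F$ and extend $h$, and the paper likewise derives Theorem~\ref{thm:RealiseAlgebraic3Type-intro} from that intermediate statement (their Theorem~\ref{thm:GoalRelativek-spaces}, which in turn rests on the CW version Theorem~\ref{thm:GoalRelativekCW} and the chain-level Proposition~\ref{prop:kConjChain}) by exactly the factoring-through-$P_2(X_0)$ argument you sketch for $(1)\Rightarrow(2)$ and the ``extend over the remaining cells using $\pi_{\geq 3}(P_2(X_1))=0$'' step you sketch for $(2)\Rightarrow(1)$.

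Where your plan diverges, and where there is a genuine gap, is in the proposed proof of the intermediate CW statement by direct cell-by-cell obstruction theory. Two specific problems:
\begin{itemize}
\item The Eilenberg obstruction to extending a map $X_0^{(2)}\cup Y_0\to P_2(X_1)$ over the $3$-cells lies in $H^3(X_0,Y_0;\pi_2(X_1))$, not in $H^3(M(\nu_0|_{Y_0}),Y_0;\pi_2(X_1))$; the latter is where the relative $k$-invariants live, and there is only a comparison map between these groups, not an identification. The paper sidesteps this by defining the $k$-invariant chain-theoretically via a map to a free resolution $C_*^\pi$ (Construction~\ref{cons:Relativek}), so that the cone $\operatorname{Cone}(t_L)$ computes $H^*(M(\nu|_Y),Y;-)$ on the nose.
\item The step ``exploit the $\pi_2$-action on $2$-cell extensions to arrange the induced map on $\pi_2(X_0)$ to coincide with $F$'' is ill-posed: a map defined only on $X_0^{(2)}\cup Y_0$ does not induce a well-defined homomorphism on $\pi_2(X_0)\cong H_2(\widetilde X_0)$, because boundaries of $3$-cells need not go to boundaries until the map is extended. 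This is precisely the subtlety the paper handles in the long ``if'' direction of Proposition~\ref{prop:kConjChain}, where the chain map $f_2$ is carefully built (using the auxiliary lifts $g'$, $F_2$, $E_2$) so that it simultaneously induces $F$ on $H_2$ \emph{and} has $f_2\circ d_3$ land in boundaries; only then is $f_3$ produced.
\end{itemize}
The paper's remedy is to work entirely at the chain level to produce $f\colon C_*(\widetilde X_0)|_{[0,3]}\to C_*(\widetilde X_1')|_{[0,3]}$, and then invoke Whitehead's realisation theorems \cite{WhiteheadCombinatorial2} to obtain the topological map. Your obstruction-theory route could in principle be made to work, but it would require rebuilding much of that chain-level argument in topological disguise; as written, the ``unwinding the definitions'' step hides exactly the content of Proposition~\ref{prop:kConjChain}.
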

Here,  a homomorphism~$F \colon \pi_2(X_0) \to \pi_2(X_1)$ is called \emph{$u$-equivariant} if~$F(\gamma x)=u(\gamma)F(x)$ for every~$x \in \pi_2(X_0)$ and~$\gamma \in \pi_1(X_0)$.
Equivalently,~$F$ is a~$\Z[\pi_1(X_0)]$-linear map~$\pi_2(X_0) \to \Res_u \pi_2(X_1)$, where given a~$\Z[\pi_1(X_1)]$-module~$H$, we write~$\Res_u H$ for the~$\Z[\pi_1(X_0)]$-module whose underlying abelian group is~$H$,  but with~$\pi_1(X_0)$-action given by~$\gamma \cdot x=u(\gamma)x$.

\begin{remark}
The reason we work with spaces that are merely homotopy equivalent to CW pairs (instead of working with CW pairs as is common in the literature) is to ensure that our results apply to~$4$-dimensional topological manifolds.
\end{remark}	
	
If~$(X_0,Y_0)$ and~$(X_1,Y_1)$ are CW pairs,  Theorem~\ref{thm:RealiseAlgebraic3Type-intro} admits  the following reformulation.

\begin{corollary}
	Let~$(X_0,Y_0)$ and~$(X_1,Y_1)$ be CW pairs,  and let~$\iota_j \colon Y_j \to X_j$ be the inclusion for~$j=0,1$.
For a group isomorphism~$u\colon \pi_1(X_0)\to \pi_1(X_1)$, a map~$h \colon Y_0 \to Y_1$ satisfying~$u\circ(\iota_0)_*= (\iota_1)_* \circ h$, and an~$u$-equivariant homomorphism~$F \colon \pi_2(X_0) \to \pi_2(X_1)$, the following assertions are equivalent:	
	\begin{itemize}
		\item there is a map~$f\colon X_0^{(3)}\cup Y_0\to X_1$  that extends~$h$ and induces~$u$ and~$F$;
		\item the relative~$k$-invariants satisfy
		$$(u,h)^*(k_{X_1,Y_1}^{\nu_1})=F_*(k_{X_0,Y_0}^{\nu_0}) \in H^3(M(\nu_0|_{Y_0}),Y_0;\pi_2(X_1))$$
for every~$\nu_i \colon X_i \to B\pi_1(X_i)$ that induces the identity on $\pi_1$ for~$i=0,1$.
	\end{itemize}
\end{corollary}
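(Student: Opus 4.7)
The plan is to deduce the corollary directly from~\cref{thm:RealiseAlgebraic3Type-intro} by establishing that, in the CW setting, the first bullet of the corollary is equivalent to the first bullet of the theorem: that is, the existence of a map $f\colon X_0^{(3)}\cup Y_0\to X_1$ extending~$h$ and inducing $u$ and $F$ is equivalent to the existence of a map $c_0\colon X_0\to P_2(X_1)$ with the three properties listed in the theorem. Chained with the theorem, this yields the corollary.

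First I would handle the implication "$f\Rightarrow c_0$". Given $f$, set $g:=c_1\circ f\colon X_0^{(3)}\cup Y_0\to P_2(X_1)$. Since $\pi_n(P_2(X_1))=0$ for all $n\geq 3$, cellular obstruction theory extends $g$ across all cells of $X_0$ of dimension $\geq 4$ without obstruction, producing the desired $c_0\colon X_0\to P_2(X_1)$. This $c_0$ satisfies $c_0|_{Y_0}=c_1\circ h$ on the nose, induces $u$ on $\pi_1$ (which is detected by the $2$-skeleton), and, because $\pi_2(X_0)\cong\pi_2(X_0^{(3)})$ and $c_0=c_1\circ f$ holds on that subcomplex, satisfies $(c_0)_*=(c_1)_*\circ F$ on $\pi_2$, which is exactly the condition required by~\cref{thm:RealiseAlgebraic3Type-intro}.

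For the reverse direction "$c_0\Rightarrow f$", I would take $c_0$ as provided by~\cref{thm:RealiseAlgebraic3Type-intro}. Using the homotopy $c_1\circ h\simeq c_0|_{Y_0}$ and the homotopy extension property for the CW pair $(X_0,Y_0)$, I would replace $c_0$ by a homotopic map, still called $c_0$, satisfying $c_0|_{Y_0}=c_1\circ h$ strictly. After replacing $c_1\colon X_1\to P_2(X_1)$ by a Serre fibration so that $h$ becomes an honest lift over $Y_0$, I would lift $c_0|_{X_0^{(3)}\cup Y_0}$ cellwise relative to $Y_0$. The homotopy fiber of $c_1$ is $2$-connected (its only nonvanishing homotopy groups are $\pi_n(X_1)$ for $n\geq 3$), so the cellular obstructions in $H^n(X_0^{(3)}\cup Y_0,Y_0;\pi_{n-1}(\text{fiber}))$ vanish for all $n\leq 3$. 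The resulting $f\colon X_0^{(3)}\cup Y_0\to X_1$ extends $h$ and induces the prescribed $u$ and $F$ since $c_0$ does and $c_1$ induces isomorphisms on $\pi_1$ and $\pi_2$.

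The main obstacle I anticipate is the bookkeeping around $Y_0$: ensuring that the HEP replacement does not disturb the induced maps on homotopy groups (this is automatic from homotopy invariance) and that the cellular lift restricts to $h$ on $Y_0$ on the nose rather than merely up to homotopy. Both points are settled by choosing the fibration model of $c_1$ so that $h$ is a genuine lift and performing the cellular extension rel~$Y_0$; with this set-up the argument reduces to routine obstruction-theoretic verifications.
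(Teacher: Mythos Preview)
Your proposal is correct and follows the same overall strategy as the paper: reduce to \cref{thm:RealiseAlgebraic3Type-intro} by proving that the existence of $f$ is equivalent to the existence of $c_0$.

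For the direction $f\Rightarrow c_0$, your argument (extend $c_1\circ f$ over the remaining cells of $X_0$, which all have dimension $\geq 4$, using $\pi_{\geq 3}(P_2(X_1))=0$) is essentially the same as the paper's, which phrases it as $X_0\to P_2(X_0)\simeq P_2(X_0^{(3)}\cup Y_0)\xrightarrow{P_2(f)} P_2(X_1)$.

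For $c_0\Rightarrow f$ the two arguments genuinely diverge. The paper chooses the explicit model in which $P_2(X_i)$ is obtained from $X_i$ by attaching cells of dimension $\geq 4$, makes $c_0$ cellular, and then simply restricts to $X_0^{(3)}$ to obtain a map into $X_1^{(3)}\subset X_1$; the homotopy extension property on $Y_0^{(3)}\hookrightarrow X_0^{(3)}$ then fixes the restriction to equal $\iota_1\circ h$ on $Y_0^{(3)}$, and one extends by $h$ over the rest of $Y_0$. Your route instead replaces $c_1$ by a fibration and lifts $c_0|_{X_0^{(3)}\cup Y_0}$ relative to $Y_0$, using that the fibre is $2$-connected so that the obstructions in $H^n(X_0^{(3)}\cup Y_0,Y_0;\pi_{n-1}(\text{fibre}))$ vanish for $n\leq 3$. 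Both are standard; the paper's version avoids the fibration replacement and is slightly more elementary, whereas yours makes the obstruction-theoretic mechanism explicit and gives $f|_{Y_0}=h$ directly without the separate extension step over higher cells of~$Y_0$.
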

\begin{proof}
By \cref{thm:RealiseAlgebraic3Type-intro}, it suffices to show that there exists a map~$f \colon X_0^{(3)}\cup Y_0\to X_1$ as in this corollary if and only if there exists a map~$c_0 \colon X_0 \to P_2(X_1)$ as in \cref{thm:RealiseAlgebraic3Type-intro}. 
In one direction,   starting from~$f$,  since the inclusion~$X_0^{(3)}\cup Y_0\to X_0$ induces
a homotopy equivalence on Postnikov~$2$-types,  the required $c_0$ arises as the composition
	$$ X_0 \to P_2(X_0) \simeq P_2(X_0^{(3)}\cup Y_0) \xrightarrow{P_2(f)} P_2(X_1).$$
	 It remains to construct~$f$ from~$c_0$. 
	 For this, choose a model for~$P_2(X_i)$ that is obtained from~$X_i$ by attaching cells of dimension~$\geq 4$. 
	 Homotoping~$c_0$ if necessary (using cellular approximation), we assume that it is cellular. 
Restricting this map to~$X_0^{(3)}$ leads to a map~$f'\colon X_0^{(3)}\to X_1^{(3)}\hookrightarrow X_1$ that induces~$u$ and~$F$ and such that~$f'|_{Y_0^{(3)}}\simeq \iota_{Y_1}\circ h|_{Y_0^{(3)}}$. 
Since~$Y_0^{(3)}\hookrightarrow X_0^{(3)}$,  the homotopy extension property shows that $f'$ is homotopic to a map~$f''\colon X_0^{(3)}\to X_1^{(3)}\hookrightarrow X_1$ that induces~$u$ and~$F$ and such that~$f''|_{Y_0^{(3)}}=\iota_{Y_1}\circ h|_{Y_0^{(3)}}$. 
Using $h$ to extend~$f''$ over the rest of~$Y_0$ yields the required~$f$.
\end{proof}
	
\subsection*{Organisation}
Section~\ref{sub:PrepKInvariant} collects some conventions and lemmas on chain complexes.
Section~\ref{sub:kinvariantChain} introduces the relative~$k$-invariant of a chain map.
Section~\ref{sub:kinvariantCW} focuses on the relative~$k$-invariant of~CW pairs, whereas Section~\ref{sec:kInvariantNotCW} defines the relative~$k$-invariant in general and proves our main results.
Section~\ref{sec:AdditionalProperties} collects some additional properties of relative $k$-invariants.

\subsection*{Acknowledgments}
AC was partially supported by the NSF grant DMS~2303674.

\subsection*{Conventions}
Unless specified otherwise, spaces are assumed to path-connected and equipped with a basepoint.
Maps are assumed to be basepoint preserving and if $(X,Y)$ is a pair, the basepoint is assumed to lie in $Y$; if~$Y$ is empty, we assume that~$x \in X$.
Throughout this paper,  for every space $X$, we choose a model $B\pi_1(X)$ and assume that all maps $X \to B\pi_1(X)$ that we consider induce the identity on fundamental groups.
The mapping cylinder of a map $f \colon X \to Y$ is denoted $M(f)$, whereas the mapping cone of a chain map $g \colon C \to D$ is denoted $\operatorname{Cone}(g)$.
	
\section{Preparatory lemmas}
\label{sub:PrepKInvariant}

This section sets up our conventions on chain complexes and proves some preliminary lemmas needed to define and study relative $k$-invariants.
\medbreak

In what follows,  chain complexes are assumed to be free and have no chain modules in negative dimensions. 
Given a chain complex $K_*$, we write~$B_i(K)\subseteq K_i$ for the boundaries,  and~$Z_i(K)\subseteq K_i$ for the cycles of degree~$i$.
As in~\cite[page 54]{EilenbergMacLane-operators}, an~\emph{augmented}~$\Z[\pi]$-chain complex~$(K_*,\aug)$ is a chain complex $K_*$ together with a surjective map~$\aug  \colon K_0 \to \Z$, such that
$$ \ldots \to K_1 \xrightarrow{d_1} K_0 \xrightarrow{\aug} \Z \to 0$$
is again a~$\Z[\pi]$-chain complex.
In other words, one requires~$\aug(\gamma x)=\aug(x)$ for every~$x \in K_0$ and every~$\gamma \in \pi$ as well as~$\aug \circ d_1=0$. 
We frequently omit the map~$\aug$ from the notation and just say that~$K_*$ is an augmented chain complex.
Additionally, we set~$\widetilde{H}_0(K):=\ker(\aug)/B_0(K)$ and say that~$K_*$ is \emph{acyclic in dimensions~$<q$} if~$\widetilde{H}_0(K)=0$ and~$H_i(K)=0$ for~$0<i<q$.
Finally,  our notation for truncations is 
$$ K_*|_{[0,n]}:=\left( 0 \to K_n \to K_{n-1} \ldots \to K_1 \to K_0 \to 0\right).$$
The next lemma is a relative version of the uniquness portion of~\cite[Proposition 5.1]{EilenbergMacLane-operators} which, roughly speaking, states that if $K_*'$ is a chain complex that is acyclic in dimensions~$<q$,  then for any chain complex $K_*$ there is, up to homotopy, a unique 
chain map $K_*|_{[0,q-1]} \to K'$.

\begin{lemma}
	\label{lem:EilenbergMaclane51}
	Let~$K_*,K_*'$ be augmentable~$\Z[\pi]$-chain complexes with~$K_*'$ acyclic in dimensions~$<q$,  let~$L_* \leq K_*$ and~$L_*' \leq K_*'$ be subcomplexes such that~$L_i \leq K_i$ and~$L_i' \leq K_i'$ are summands for each~$i$.
	If there exists two chain maps~$\alpha,\beta \colon K_* \to K_*'$ 
	\begin{enumerate}
		\item that induce chain maps between the augmented chain complexes,i.e.\ 
		$$\aug \circ \beta_0=\aug=\aug \circ \alpha_0 \colon K_0 \to \Z,$$
		\item whose restrictions~$\alpha|_L,\beta|_L \colon L_* \to L_*'$ are chain homotopic via a chain homotopy
$$\psi:=\{ \psi_i \colon L_i \to L_{i+1}'\}_i  \colon \alpha|_L \simeq \beta|_L,$$
	\end{enumerate}
	then~$\alpha,\beta$ are chain homotopic in dimensions~$\leq q-1$ via a chain homotopy
	$$D:=\{ D_i \colon K_i \to K_{i+1}'\}_{i \leq q-1}$$
	that agrees with~$\psi$ on~$L|_{[0,q-1]}.$
\end{lemma}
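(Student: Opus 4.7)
The plan is to construct $D_i$ inductively for $i = 0, 1, \ldots, q-1$, maintaining at each stage both the chain homotopy relation $d_{i+1} D_i + D_{i-1} d_i = \alpha_i - \beta_i$ (with the convention $D_{-1} = 0$) and the compatibility $D_i|_{L_i} = \psi_i$. Since $L_i$ is a $\Z[\pi]$-summand of $K_i$, I first choose a complement $C_i$ so that $K_i = L_i \oplus C_i$; being a summand of the free $\Z[\pi]$-module $K_i$, each $C_i$ is projective. This reduces the problem to defining $D_i|_{C_i}$ so that it glues with $\psi_i$ on $L_i$ to give a chain homotopy in the required range.

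For the base case $i = 0$, I want $d_1 D_0 = \alpha_0 - \beta_0$. Hypothesis~(1) gives $\aug \circ (\alpha_0 - \beta_0) = 0$, so the image lies in $\ker(\aug_{K'_0}) = B_0(K') = \im(d_1)$, where the first equality uses $\widetilde H_0(K') = 0$. On $L_0$, the formula $d_1 \psi_0 = (\alpha_0 - \beta_0)|_{L_0}$ holds by the chain homotopy property of $\psi$ (with $\psi_{-1} = 0$), so the choice $D_0|_{L_0} := \psi_0$ is compatible with the requirement. On $C_0$, projectivity furnishes a lift of $(\alpha_0 - \beta_0)|_{C_0}$ through the surjection $d_1 \colon K'_1 \twoheadrightarrow B_0(K')$.

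For the inductive step, suppose $D_0, \ldots, D_{i-1}$ have been constructed with the stated properties, where $1 \leq i \leq q-1$. I set $\Delta_i := \alpha_i - \beta_i - D_{i-1} d \colon K_i \to K_i'$. A routine calculation combining the inductive chain homotopy relation with $d \circ d = 0$ gives $d \Delta_i = 0$, hence $\Delta_i$ takes values in $Z_i(K') = B_i(K') = \im(d_{i+1})$ by the acyclicity of $K_*'$ in dimensions below $q$. On $L_i$, the identity $\Delta_i|_{L_i} = d \psi_i$ follows from the chain homotopy property of $\psi$ together with the inductive compatibility $D_{i-1}|_{L_{i-1}} = \psi_{i-1}$, so $D_i|_{L_i} := \psi_i$ is the correct choice. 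On $C_i$, projectivity again yields a lift of $\Delta_i|_{C_i}$ through $d_{i+1}$, and setting $D_i := \psi_i \oplus D_i|_{C_i}$ completes the induction.

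The main obstacle is essentially bookkeeping: at each stage one must verify that the tautological assignment $D_i|_{L_i} := \psi_i$ is consistent with the cycle condition $d \Delta_i = 0$ and with the previously built $D_{i-1}$, and this is precisely where the assumption that $\psi$ is a chain homotopy on $L_*$ (in the augmented sense) enters. The acyclicity of $K'_*$ up to dimension $q-1$ guarantees that the obstruction to extending $D$ one more step lives in $H_i(K') = 0$ at each stage, and it is also what prevents the construction from being pushed beyond degree $q-1$ without further hypotheses.
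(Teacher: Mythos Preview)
Your proof is correct and follows essentially the same approach as the paper's: both argue by induction on $i$, using acyclicity of $K'_*$ in degrees $<q$ to ensure that $\alpha_i-\beta_i-D_{i-1}d_i$ lands in $B_i(K')$, setting $D_i|_{L_i}=\psi_i$, and lifting arbitrarily on a complement of $L_i$ in $K_i$. Your write-up is somewhat more explicit about the role of projectivity of the complement and the verification that $\Delta_i|_{L_i}=d\psi_i$, but the argument is the same.
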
 
\begin{proof}
	The proof is a small modification of the proof of~\cite[Theorem 5.1]{EilenbergMacLane-operators} and proceeds by induction on~$i$.
	We begin by defining~$D_0 \colon K_0 \to K_1'$ that extends~$\psi_0$.
	Since~$K_*'$ is acyclic, we have~$\widetilde{H}_0(K')=\ker(\aug)/B_0(K')=0$.
	Since~$\aug \circ (\alpha_0-\beta_0)=0$, the map~$\alpha_0-\beta_0 \colon K_0 \to K_0'$ takes values in~$\ker(\aug)=B_0(K')$.
	Lift this map to~$D_0 \colon K_0 \to K_1'$ by defining it on~$L_0$ to be~$\psi_0$ and extending it arbitrarily over the rest of~$C_0$.
	
	The proof of the induction step is now entirely analogous to the argument from~\cite{EilenbergMacLane-operators}.
	The only difference is that when lifting~$E_i:=\alpha_i-\beta_i-D_{i-1}\circ d_i^K \colon C_i \to B_i(K')$ to~$D_i \colon C_i \to B_{i+1}(K)$,  we define~$D_i$ to be~$\psi_i$ on~$L_i \leq K_i$ but pick an arbitrary lift on the remainder of~$K_i$.
\end{proof}

Applying~\cite[Proposition 5.1]{EilenbergMacLane-operators} to the case where~$K_*'$ is acyclic (i.e. $q=\infty)$ shows that,  up to chain homotopy, there is a unique map $K_* \to K_*'$ that preserves augmentations~\cite[Corollary~5.2]{EilenbergMacLane-operators}.
In particular, this applies if $K_*'$ is a free resolution of $\Z$.
\begin{notation}
	For the rest of the section, we fix a free~$\Z[\pi]$-resolution~$C_*^\pi$ of~$\Z$.
	
Additionally,  given a chain map $f_* \colon A_* \to B_*$ and a map $\varrho \colon A_n \to  Z_n(B) \subset B_n$, we write $f + \varrho \colon A_*|_{[0,n]} \to B_*$ for the chain map given by $f_i$ in degree $i \neq n$ and $f_n+\varrho$ in degree $n$.
\end{notation}	

Since~$C_*^\pi$ is an acyclic augmentable~$\Z[\pi]$-chain complex,  up to chain homotopy,  there is a unique map $t \colon K_* \to C_*^\pi$.
The next lemma is a (relative) partial converse, as it builds a map from a subcomplex of~$C_*^\pi$ into $K_*$; this result will be used to define the relative $k$-invariant.

\begin{lemma}
	\label{lem:existence-alpha}
	Let~$K_*$ be an augmented~$\Z[\pi]$-chain complex that is acyclic in dimensions~$<2$, let~$t \colon K_* \to C_*^\pi$ be a chain map that preserves augmentations, let~$i_L\colon L_*\to K_*$ be a chain map,
	and set~$t_L:=t \circ i_L$. 
	There is an augmentation-preserving chain map 
	$$\alpha\colon C_*^\pi|_{[0,2]}\to K_*|_{[0,2]}$$ 
and a map $\phi\colon L_2\to Z_2(K)$ such that~$\alpha\circ t_L$ and~$i_L+\phi$ are chain homotopic as maps~$L_*|_{[0,2]}\to K_*$.
	
	Given an augmentation-preserving chain map~$\alpha\colon C_*^\pi|_{[0,2]}\to K_*|_{[0,2]}$,  such a $\phi$ exists.
\end{lemma}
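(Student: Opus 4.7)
The plan is to construct $\alpha$ first by a standard induction on degree, and then, given any augmentation-preserving $\alpha$, to build a chain homotopy $(D_0, D_1)$ together with the map $\phi$. This proves the second assertion, and combining the two steps gives the first.

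To produce $\alpha$, I would proceed inductively using freeness of $C_*^\pi$. In degree $0$, for each basis element $c$ of $C_0^\pi$ pick a preimage $\alpha_0(c) \in K_0$ of $\aug_{C^\pi}(c) \in \Z$ under the surjection $\aug_K$. In degree $1$, for each basis element $c$ the element $\alpha_0(dc)$ lies in $\ker(\aug_K) = B_0(K)$ by $\wt H_0(K) = 0$, hence lifts through $d \colon K_1 \to B_0(K)$. In degree $2$, similarly, $\alpha_1(dc)$ is a $1$-cycle, hence a boundary by $H_1(K) = 0$, and lifts to some $\alpha_2(c)$. Extending $\Z[\pi]$-linearly in each degree yields the desired augmentation-preserving $\alpha$.

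Given any augmentation-preserving $\alpha$, I would then construct $D_0$, $D_1$, and $\phi$ in sequence, mimicking the argument of \cref{lem:EilenbergMaclane51}. Since both $t$ and $\alpha$ preserve augmentations,
$$\aug_K \circ i_L|_{L_0} = \aug_{C^\pi} \circ t_L|_{L_0} = \aug_K \circ (\alpha \circ t_L)|_{L_0},$$
so $(i_L - \alpha \circ t_L)|_{L_0}$ takes values in $\ker(\aug_K) = B_0(K)$, and freeness of $L_0$ yields a lift $D_0 \colon L_0 \to K_1$ with $d D_0 = i_L - \alpha \circ t_L$ on $L_0$. A short computation then shows that $(i_L - \alpha \circ t_L - D_0 \circ d)|_{L_1}$ is a $1$-cycle in $K$, which by $H_1(K) = 0$ and freeness of $L_1$ lifts to $D_1 \colon L_1 \to K_2$. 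Finally, define
$$\phi := D_1 \circ d - i_L|_{L_2} + \alpha_2 \circ t_L|_{L_2}.$$
A direct calculation using the degree-$1$ chain-homotopy relation gives $d \phi = 0$, so $\phi$ takes values in $Z_2(K)$, and by construction $(D_0, D_1)$ is the required chain homotopy $\alpha \circ t_L \simeq i_L + \phi$ on $L_*|_{[0,2]}$.

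Nothing in this argument is deep. The main subtlety is that degree $2$ plays an asymmetric role: there is no $D_2$, so the top chain-homotopy equation is used to \emph{define} $\phi$ rather than to produce a lift, and the essential consistency check is that the resulting $\phi$ automatically lands in cycles.
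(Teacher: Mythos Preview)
Your proof is correct and slightly more elementary than the paper's. The paper does not build $\alpha$ and the homotopy directly; instead it embeds $K_*|_{[0,2]}$ into a full free resolution $F_*$ of $\Z$ (by resolving $\ker d_2^K$), chooses an augmentation-preserving chain homotopy equivalence $\wh\alpha\colon C_*^\pi\to F_*$, and sets $\alpha=\wh\alpha|_{[0,2]}$. The homotopy $\alpha\circ t_L\simeq \iota\circ i_L$ of maps into $F_*$ then comes from uniqueness of maps into an acyclic complex, and $\phi$ is defined as $d_3^F\circ D_2$ using the degree-$2$ component of that homotopy. For the final sentence, the paper extends a given $\alpha$ to some $\wh\alpha$ and reruns the argument.

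Your approach avoids the auxiliary resolution $F_*$ entirely: you build $\alpha$ degree by degree from acyclicity in dimensions $<2$, and then construct $(D_0,D_1,\phi)$ directly, using the degree-$2$ equation to \emph{define} $\phi$ rather than to produce a lift. This is cleaner and makes the role of $\phi$ as a ``defect'' term more transparent. The paper's route has the mild advantage that the existence of $\alpha$ and the construction of $\phi$ are handled uniformly via the single equivalence $\wh\alpha$, which fits the Eilenberg--MacLane framework used elsewhere in the paper; your route is shorter but more ad hoc. Both are perfectly valid.
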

\begin{proof}
	Since~$K_*$ is acyclic in dimensions~$<2$, there is a free resolution~$F$ of~$\Z$ containing~$K_*$ as a subcomplex with~$F_*|_{[0,2]}=K_*|_{[0,2]}$ (pick a free resolution $F'_*$ of $\ker d_2^K$ and then take $F_i=K_i$ for~$i <3$ and~$F_i=F'_{i-3}$ for~$ i>2$).
	Pick an augmentation-preserving chain homotopy equivalence~$\wh\alpha\colon C_*^\pi\to F$ and define 
	$$\alpha:=\wh\alpha|_{[0,2]} \colon C_*^{\pi}|_{[0,2]} \to F_*|_{[0,2]}=K_*|_{[0,2]}.$$
	It remains to define $\phi$ and verify that~$\alpha\circ t_L$ and~$i_L+\phi$ are chain homotopic as maps~$L_*|_{[0,2]}\to K_*$.
	Let~$\wh\alpha^{-1}$ be a chain homotopy inverse of~$\wh\alpha$ and let~$\iota\colon K_* \to F$ be the inclusion. 
	By uniqueness of~$t$,~$t\simeq \wh\alpha^{-1}\circ\iota$ up to degree~$2$.
	It follows that
	\[\alpha\circ t_L=\wh \alpha\circ t \circ i_L\simeq \wh\alpha \circ \wh\alpha^{-1}\circ\iota\circ i_L\simeq \iota\circ i_L\colon L|_{[0,2]}\to F_*.\]
Let $\{D_i\colon L_i\to F_{i+1}\}_{i=0,1,2}$ be a chain homotopy and define 
	$$\phi:=d_3^F\circ D_2\colon L_2\to\ker(d_2^F)=\ker(d_2^K).$$ 
Since~$F|_{[0,2]}=K|_{[0,2]}$, a chain homotopy~$\alpha\circ t_L\simeq i_L+\phi\colon L_*|_{[0,2]}\to K_*$ can be obtained by taking~$D_i \colon L_i \to F_{i+1}=K_{i+1}$ in degrees $0,1$ and the trivial map $L_2\xrightarrow{0}K_3$ in degree $2$.
	
It remains to prove the last statement.
First, observe that given an augmentation-preserving chain map $\alpha\colon C_*^\pi|_{[0,2]}\to K_*|_{[0,2]}=F_*|_{[0,2]}$,  there is a chain homotopy equivalence~$\wh\alpha\colon C_*^\pi\to F$ with~$\alpha=\wh\alpha|_{[0,2]}$: since $F_*$ is acyclic,  $\alpha$ extends to~an augmentation preserving~$\wh\alpha\colon C_*^\pi\to F$
and since $C_*^\pi$ and $F_*$ are both acylic augmented chain complexes,  this must be a chain homotopy equivalence; see e.g.~\cite[proof of Theorem 5.1]{EilenbergMacLane-operators}.
Following through the remainder of the proof above then leads to the final sentence of the lemma.
\end{proof}

We conclude with a result concerning mapping cones.
Here,  recall that the mapping cone of a chain map $i \colon L_* \to K_*$ is the chain complex with $n$-th chain module~$\operatorname{Cone}(i)_n=K_n \oplus L_{n-1}$ and~$n$-th differential $\bsm d_n^K & i_{n-1} \\ 0 &-d_{n-1}^L\esm$.

\begin{lemma}
\label{lem:Cofibre}
	If a chain complex $K_*'$ is acyclic in degrees $>0$,  then any homotopy commutative diagram of chain maps
	\[\begin{tikzcd}
		L_*\ar[r,"i"]\ar[d,"h"]&K_*\ar[d,"v"]\\
			L'_*\ar[r,"i'"]&K'_*
	\end{tikzcd}
	\]
	induces, up to homotopy, a unique chain map $(h,v)\colon \operatorname{Cone}(i)_*\to \operatorname{Cone}(i')_*$ such that the diagram
		\[\begin{tikzcd}
	K_*\ar[d,"v"]\ar[r]&\operatorname{Cone}(i)_*\ar[d,"{(h,v)}"]\ar[r,"\proj_2"]&L_{*-1}\ar[d,"h"]\\
		K'_*\ar[r]&\operatorname{Cone}(i')_*\ar[r,"\proj_2"]&L'_{*-1}
	\end{tikzcd}
	\]
	is commutative. Moreover,
\begin{enumerate}
	\item if $h\simeq h'$ and $v\simeq v'$, then $(h,v)\simeq (h',v')$,
	\item if $h$ and $v$ are homotopy equivalences, then so is $(h,v)$.
\end{enumerate}
\end{lemma}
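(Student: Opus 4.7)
My plan is to first construct $(h,v)$ and establish its uniqueness up to homotopy, then deduce the additional properties (1) and (2) from these.

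For existence, I would fix a chain homotopy $\Phi_*\colon L_*\to K'_{*+1}$ implementing the square's homotopy commutativity, i.e., $d^{K'}\Phi + \Phi d^L = v\circ i - i'\circ h$, and set
\[(h,v)_n := \bsm v_n & \Phi_{n-1} \\ 0 & h_{n-1}\esm\colon K_n\oplus L_{n-1}\to K'_n\oplus L'_{n-1}.\]
A direct matrix computation verifies this is a chain map and makes both squares of the displayed diagram strictly commute. For uniqueness up to homotopy, commutativity of the two squares forces any candidate $(h,v)_n$ to have $(1,1)$-entry $v_n$, $(2,1)$-entry zero, and $(2,2)$-entry $h_{n-1}$; two such chain maps can therefore differ only in their $(1,2)$-entries, which must themselves be chain homotopies $\Phi,\Phi'$ implementing $v\circ i\simeq i'\circ h$. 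Exhibiting a chain homotopy between the two candidates reduces to finding maps $\delta_n\colon L_{n-1}\to K'_{n+1}$ with $d^{K'}_{n+1}\delta_n - \delta_{n-1}d^L_{n-1} = \Phi_{n-1} - \Phi'_{n-1}$, and these are built by induction on $n$: at each stage the right-hand side lies in $\ker d^{K'}_n$ by a routine calculation, the acyclicity hypothesis provides $\ker d^{K'}_n = \operatorname{im} d^{K'}_{n+1}$ for $n\geq 1$, and the freeness of $L_{n-1}$ then yields the required lift. This is the crux of the proof and the only place the acyclicity hypothesis on $K'_*$ enters.

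For property (1), given chain homotopies $J\colon h\simeq h'$ and $K\colon v\simeq v'$, a brief direct calculation using $i\circ d^L = d^K\circ i$ and $d^{K'}\circ i' = i'\circ d^{L'}$ shows that $\Phi' := \Phi - K\circ i + i'\circ J$ is a valid chain homotopy for $v'\circ i\simeq i'\circ h'$. With this specific choice of implementing chain homotopies for $(h,v)$ and $(h',v')$, the block-diagonal map with entries $K_n$ and $-J_{n-1}$ is readily verified to be a chain homotopy $(h,v)\simeq (h',v')$. The uniqueness statement above then upgrades this conclusion to the case of arbitrary implementing chain homotopies.

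For property (2), I would use the standard short exact sequences $0\to K_*\to \operatorname{Cone}(i)_*\to L_{*-1}\to 0$ and its primed counterpart; the map $(h,v)$ induces a morphism between the associated long exact sequences in homology whose outer vertical arrows are $v_*$ and $h_*$. When $h$ and $v$ are homotopy equivalences these outer maps are isomorphisms, so the five-lemma yields that $(h,v)_*$ is an isomorphism in every degree. Since $K_*, L_*, K'_*, L'_*$ are assumed free and non-negatively graded, so are the two mapping cones, and a quasi-isomorphism between bounded-below free chain complexes is a chain homotopy equivalence. I expect the main obstacle to be the inductive construction of the null-homotopy in the uniqueness argument, where the positive-degree acyclicity of $K'_*$ plays the decisive role and must be combined carefully with the freeness of $L_*$.
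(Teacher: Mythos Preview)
Your proposal is correct and follows essentially the same approach as the paper: both construct $(h,v)$ as the block matrix $\bsm v & \Phi \\ 0 & h\esm$, observe that any candidate must have this form, and then argue that two choices of $\Phi$ yield homotopic maps because $K'_*$ is acyclic in positive degrees. The paper simply cites \cite[Corollary~5.2]{EilenbergMacLane-operators} for this last step and leaves properties (1) and (2) to the reader, whereas you carry out the inductive lift explicitly and supply complete arguments for (1) and (2); your five-lemma route to (2) is a perfectly standard alternative to directly building a homotopy inverse via $(\bar h,\bar v)$.
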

\begin{proof}
The map given by~$\bsm v&\psi\\ 0 &h\esm$, where~$\psi$ is any homotopy~$v\circ i\simeq i' \circ h$, makes the lower diagram commute.
 Conversely, any map that makes this diagram commute must have the form~$\bsm v&\psi\\ 0 &h\esm$ with~$\psi$ a homotopy~$v\circ i\simeq i' \circ h$.  Since~$K'_*$ is acyclic in degrees~$>0$, any two homotopies~$v\circ i\simeq i' \circ h$ are homotopic~\cite[Corollary 5.2]{EilenbergMacLane-operators}, 
and hence the homotopy commutative square defines a map~$(h,v):=\bsm v&\psi\\ 0 &h\esm$ unique up to homotopy. 
The remaining verifications are left to the reader.
		\end{proof}

\section{Relative $k$-invariants of chain complexes.}
\label{sub:kinvariantChain}

This section defines the $k$-invariant associated to an augmented chain complex $K_*$ and a chain map $i_L \colon L_* \to K_*$.
Our approach follows that of Eilenberg-Maclane~\cite{EilenbergMacLane-operators} which treats the absolute case. 
For the remainder of the section,  we continue to fix a free resolution~$C_*^\pi$ of~$\Z$.

\begin{construction}
	\label{cons:Relativek}
	Let~$K_*$ be an augmented chain complex that is acyclic in dimensions~$<2$,  let~$i_L\colon L_*\to K_*$ be a chain map,  let~$t \colon K_* \to C_*^\pi$ be a chain map 
	that preserves augmentations, set~$t_L:=t \circ i_L$,  let~$\operatorname{Cone}(t_L)_*$ be the mapping cone of~$t_L$, and let~$\alpha\colon C_*^\pi|_{[0,2]}\to K_*|_{[0,2]}$ and let~$\phi\colon L_2\to Z_2(K)$ be as in Lemma~\ref{lem:existence-alpha}. 
	Consider the cochain
	\[\theta_{K,L}^t \colon \operatorname{Cone}(t_L)_3=C_3^\pi\oplus L_2\xrightarrow{(\alpha_2\circ d_3^\pi,\phi)}
	Z_2(K)\to H_2(K).\]
	To see that this map is a cocycle~$\operatorname{Cone}(t_L)_3 \to H_2(K)$ note that since~$\alpha\circ t_L\simeq i_L+\phi\colon L_*|_{[0,2]}\to K_*$,  we have 
	\[\alpha_2\circ d_3^\pi\circ (t_L)_3
	=\alpha_2\circ (t_L)_2\circ d_3^L\simeq (i_L)_2\circ d_3^L+\phi\circ d_3^L
	=d_3^K\circ (i_L)_3 +\phi\circ d_3^L\equiv \phi\circ d_3^L\]
	in $H_2(K)$. 
	It follows that~$ \bsm \alpha_2 \circ d_3^\pi & \phi\esm \bsm d_4^\pi & (t_L)_3 \\ 0 & -d_3^L\esm=0.$
\end{construction}

We show in Lemma~\ref{lem:kWellDef} that the homology class of $\theta_{K,L}^t$ does not depend on the choice of~$(\alpha,\phi)$, thus leading to the following definition.	 

\begin{definition}
	\label{def:k-inv}
	Let~$K_*$ be an augmented chain complex that is acyclic in dimensions~$<2$,  let~$i_L\colon L_*\to K_*$ be a chain map,  let~$t \colon K_* \to C_*^\pi$ be a chain map 
	that preserves augmentations, set~$t_L:=t \circ i_L$,  and let~$\alpha\colon C_*^\pi|_{[0,2]}\to K_*|_{[0,2]}$ and $\phi\colon L_2\to Z_2(K)$ be as in \cref{lem:existence-alpha}. 
	Define the \emph{relative~$k$-invariant} of $(K,L)$ as
	$$k_{K,L}^t:=[\theta_{K,L}^t]\in H^3(\operatorname{Cone}(t_L);H_2(K)).$$
\end{definition}

The next lemma shows that $k_{K,L}^t$ is well defined.

\begin{lemma}
	\label{lem:kWellDef}
	The definition of~$k_{K,L}^t$ does not depend on the choice of~$(\alpha,\phi)$ as in \cref{lem:existence-alpha}.
\end{lemma}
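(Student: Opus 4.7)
The plan is to exhibit an explicit cochain $\eta \in C^2(\operatorname{Cone}(t_L); H_2(K))$ with $\delta\eta = \theta_{K,L}^t - (\theta_{K,L}^t)'$, where the primes refer to the cocycle built from $(\alpha',\phi')$.  The idea is to extend the low-degree data to the full free resolutions $C_*^\pi$ and $F_*$, compare the resulting extensions via chain homotopies, and use a secondary null-homotopy to supply~$\eta$.

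I would first extend $\alpha,\alpha'$ to augmentation-preserving chain maps $\widehat\alpha,\widehat\alpha'\colon C_*^\pi\to F_*$, where $F_*$ is the free resolution of $\Z$ extending $K_*|_{[0,2]}$ described in the proof of \cref{lem:existence-alpha}.  By \cite[Corollary~5.2]{EilenbergMacLane-operators} I pick a chain homotopy $\Delta=\{\Delta_i\colon C_i^\pi\to F_{i+1}\}$ with $\widehat\alpha\simeq \widehat\alpha'$, together with chain homotopies $D, D'\colon L_*\to F_{*+1}$ witnessing $\widehat\alpha\circ t_L\simeq \iota\circ i_L\simeq \widehat\alpha'\circ t_L$ for which $\phi = d_3^F D_2$ and $\phi' = d_3^F D'_2$.

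The crucial observation is that both $D$ and $D'+\Delta\circ t_L$ are chain homotopies $\widehat\alpha\circ t_L\simeq \iota\circ i_L$, so their difference $E := D - D' - \Delta\circ t_L$ satisfies $d^F\circ E + E\circ d^L = 0$; in other words, $E$ is a chain map $L_*\to F[1]_*$.  Since $F[1]$ is acyclic in non-negative degrees and $L_*$ is a free chain complex bounded below, a standard inductive projective-lifting argument produces a null-homotopy $\{S_i\colon L_i\to F_{i+2}\}$ with $E_i = -d^F S_i + S_{i-1}\circ d^L$.  Applying $d_3^F$ in degree~$2$ and combining with $\alpha_2 - \alpha'_2 = d_3^F\Delta_2 + \Delta_1 d_2^\pi$ yields
$$\theta_{K,L}^t(x,y) - (\theta_{K,L}^t)'(x,y) = \bigl[d_3^F\Delta_2\bigl(d_3^\pi x + (t_L)_2 y\bigr) + d_3^F S_1\bigl(d_2^L y\bigr)\bigr].$$

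Finally, I would define $\eta(a,b) := [d_3^F\Delta_2(a) - d_3^F S_1(b)]\in H_2(K)$; this is well defined because $d_3^F\bigl(\Delta_2(a) - S_1(b)\bigr)\in \im d_3^F = B_2(F) = Z_2(K)$.  A direct computation of $\delta\eta(x,y) = \eta(d_3^\pi x + (t_L)_2 y,\, -d_2^L y)$ then matches the displayed formula, completing the argument.  The only real obstacle is recognising the secondary null-homotopy $S$: once one sees that the difference of two chain homotopies between the same pair of chain maps is a chain map into a shifted acyclic complex, the existence of $S$ is routine, and the remaining verifications amount to homological bookkeeping.
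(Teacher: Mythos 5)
Your argument is correct, and it produces what is in the end the same explicit cobounding $2$-cochain as the paper, but the two proofs decompose the work differently. The paper stays inside the truncated complexes: it takes a homotopy $D\colon\alpha\simeq\beta$ only in degrees $0,1$ (using that $K_*$ is acyclic in dimensions $<2$), records the failure of $D$ to extend to degree $2$ as the difference cocycle $E_2=\beta_2-\alpha_2-D_1\circ d_2^\pi$, compares the two homotopies on $L$ directly, and corrects their degree-$0$ discrepancy by a map $G\colon L_0\to K_2$ with $d_2^K\circ G=D_0\circ (t_L)_0-D_0'$; the primitive $\gamma=(E_2,-(D_1\circ(t_L)_1-D_1'-G\circ d_1^L))$ is then written out by hand. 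You instead pass to the ambient acyclic resolution $F_*$, extend $\alpha,\alpha'$ to $\widehat\alpha,\widehat\alpha'$, and exploit the observation that the difference of two chain homotopies between the same pair of maps is a chain map into the shifted acyclic complex $F[1]$, hence admits a null-homotopy $S$; your $d_3^F\Delta_2$ and $S_0,S_1$ are exactly the counterparts of the paper's $E_2$ and $G$, $D_1\circ(t_L)_1-D_1'$. Your packaging is more conceptual (it explains uniformly where the secondary correction comes from), at the cost of one normalisation that you assert rather than justify: from the defining property $\alpha\circ t_L\simeq i_L+\phi$ as maps $L_*|_{[0,2]}\to K_*$, the degree-$2$ part of the homotopy lands in $K_3$, not $F_3$, so one only gets $\phi\equiv d_3^F D_2$ modulo $B_2(K)$ after lifting $\phi+d_3^K D_2'$ through $d_3^F$. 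This is harmless because $\theta_{K,L}^t$ is valued in $H_2(K)$, so $\phi$ may be replaced by $d_3^F D_2$ without changing the cocycle, but the sentence ``for which $\phi=d_3^FD_2$'' should be argued or weakened accordingly.
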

\begin{proof}
The proof adapts~\cite[Proposition 7.1]{EilenbergMacLane-operators} from the absolute case to the relative case.
	Assume that~$\beta \colon C_*^\pi|_{[0,2]} \to K_*|_{[0,2]}$ and $\phi'\colon L_2\to Z_2(K)$ is another pair with the same properties as~$(\alpha,\phi)$.
	We will show that~$(\alpha_2 \circ d_3^\pi,\phi)$ and~$(\beta_2 \circ d_3^\pi,\phi')$ are homologous.
	Constructing a cochain~$\gamma \in C^2(\operatorname{Cone}(t_L);H_2(K))$ with $\delta^{\operatorname{Cone}(t_L)}(\gamma)=(\alpha_2 \circ d_3^\pi,\phi)-(\beta_2 \circ d_3^\pi,\phi')$ requires intermediate maps $E_2,D,D'$ and $G$ that we now introduce.

	Since~$\alpha$ and~$\beta$ preserve the augmentations and $K_*$ is acylic in dimensions $<2$,~\cite[Theorem~5.1]{EilenbergMacLane-operators} 
	ensures that there is a chain homotopy~$D \colon \alpha \simeq \beta$ in degrees~$0$ and~$1$.
	As explained in~\cite[Equation 3.6]{EilenbergMacLane-operators}, the map 
	$$E_2:=\beta_2-\alpha_2-D_1 \circ d_2^\pi\colon C_2^\pi \to K_2$$
	takes values in~$Z_2(K)$ and therefore defines a map~$E_2 \colon C_2^\pi \to H_2(K)$.
	Since~$d_2^\pi \circ d_3^\pi=0$,
	\[E_2 \circ d_3^\pi=\beta_2 \circ d_3^\pi-\alpha_2 \circ d_3^\pi\colon C_3^\pi\to H_2(K).\]
	Next, since~$\beta \circ t_L-\phi'$ and~$\alpha \circ t_L-\phi$ are chain homotopic, there exists maps~$D'_i\colon L_i\to K_{i+1}$ for~$i\leq 2$
	with~$(\beta_i-\alpha_i) \circ t_L=D'_{i-1} \circ d_i^L+d_{i+1}^K \circ D_i'$ for~$i\leq 1$, where~$D'_{-1}=0$,  and $(\beta_2-\alpha_2) \circ t_L+(\phi-\phi')=D'_{1} \circ d_2^L+d_{3}^K \circ D_2'$.
	In particular, note the equation~$d_1^K\circ D_0'=(\beta_0-\alpha_0) \circ (t_L)_0=d_1^K\circ D_0\circ (t_L)_0$. 
	Since~$K_*$ is acyclic in dimensions~$<2$,  we have~$\ker(d_1^K)=\im(d_2^K)$,  and there thus exists a map~$G\colon L_0\to K_2$ with~$d_2^K\circ G=D_0\circ (t_L)_0-D_0'$.
	
	Using the definition of~$E_2$ and noting that~$(\beta_2-\alpha_2) \circ (t_L)_2+(\phi-\phi') \equiv D_1' \circ d_2^L$ mod~$B_2(K)$, we deduce 
	\[ E_2 \circ (t_L)_2=(\beta_2-\alpha_2-D_1 \circ d_2^\pi)\circ (t_L)_2=(\phi'-\phi)+D_1' \circ d_2^L-D_1 \circ (t_L)_1 \circ d_2^L
	\colon C_2(L)\to H_2(K).\]
	Consider the map
	\[\gamma:=(E_2,-(D_1 \circ (t_L)_1-D_1'-G\circ d_1^L ))\colon C_2^\pi\oplus L_1\to K_2/B_2(K).\]
	Since $D_i \circ (t_L)_i$ and $D_i'$ both are chain homotopies between $\alpha\circ t_L$ and $\beta\circ t_L$ in degree $\leq 1$,  we have the equalities~$d_2^K\circ (D_1 \circ (t_L)_1-D_1'-G\circ d_1^L)=(D_0 \circ (t_L)_0-D_0') \circ d_1^L-d_2^K \circ G\circ d_1^L
	=0.$
	Since we already know that $\im(E_2) \subset Z_2(K)$, it follows that~$\gamma$ can be considered as a map to~$H_2(K)$.
	Finally,  we verify that~$\delta^{\operatorname{Cone}(t_L)}(\gamma)=(\beta_2 \circ d_3^\pi,\phi')-(\alpha_2 \circ d_3^\pi,\phi)$:
	\begin{align*}
		\delta^{\operatorname{Cone}(t_L)}(\gamma)
		&=(E_2,-(D_1 \circ (t_L)_1-D_1'-G \circ d_1^L)) \circ \begin{pmatrix}
			d_3^\pi & (t_L)_2 \\
			0 & -d_2^L
		\end{pmatrix} \\
		&=
		\left (\beta_2 \circ d_3^\pi-\alpha_2 \circ d_3^\pi,
		E_2 \circ (t_L)_2+(D_1 \circ (t_L)_1-D_1'))\circ d_2^L  \right) \\
		&=(\beta_2 \circ d_3^\pi,\phi')-(\alpha_2 \circ d_3^\pi,\phi).
	\end{align*}
	Thus~$(\alpha_2 \circ d_3^\pi,\phi)$ and~$(\beta_2 \circ d_3^\pi,\phi')$ represent the same cohomology class~$k_{K,L}^t\in H^3(\operatorname{Cone}(t_L);H_2(K))$.
\end{proof}

Next, we discuss the dependency on the choice of $t$ and on the free resolution $C_*^\pi$.
\begin{lemma}
\label{lem:Indepc}
Let $C_*^\pi,(C')^\pi $ be free resolutions of $\Z$, let~$K_*,K_*'$ be $\Z[\pi]$-chain complexes that are acyclic in dimensions~$<2$, let~$t \colon K_* \to C_*^\pi$ and~$t' \colon K_*' \to (C')^\pi $ be 
chain maps,  and let~$i_L\colon L_*\to~K_*$ and~$i_{L'}\colon L_*'\to K_*'$ be inclusions of subcomplexes.
Set~$t_L:=t \circ i_L$ and~$t_{L'}':=t'\circ i_{L'}$.

Given chain maps~$h\colon L_*\to L'_*$ and $g \colon C_*^\pi \to (C')^\pi_*$ such that~$g \circ t_L\simeq t_{L'}'\circ h$, there is, up to homotopy, a unique map~$ (g,h) \colon \operatorname{Cone}(t_L)_* \to \operatorname{Cone}(t_{L'}')_*$ that makes the following diagram commute:
$$
\xymatrix{
C_*^\pi \ar[d]^g\ar[r] & \operatorname{Cone}(t_L)_* \ar@{-->}[d]^-{(g,h)}\ar[r]& L_{*-1} \ar[d]^h\\
(C'_*)^\pi \ar[r]& \operatorname{Cone}(t'_{L'})_* \ar[r]& L_{*-1}' .
}
$$
Additionally,  $(g,h)$ satisfies the two following additional properties:
\begin{itemize}
\item if both $h$ and $g$ are chain homotopy equivalences, then so is $(g,h)$;
\item if $K_*=K'_*$ is an augmentable chain complex, $C_*^\pi=(C'_*)^\pi,g=\id_{C^\pi}$, and $t$ is augmentation-preserving,  then 
$$ (\id_{C_*^\pi},h)^*(k_{K,L'}^{t'})=k_{K,L}^t \in H^3(\operatorname{Cone}(t_L);H_2(K)).$$
\end{itemize}
\end{lemma}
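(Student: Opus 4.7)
Parts~(1)--(2) of the lemma follow directly from \cref{lem:Cofibre} applied to the homotopy-commutative square with vertical arrows $h,g$ and horizontal arrows $t_L, t'_{L'}$: the bottom-right chain complex $(C')^\pi_*$ is acyclic in positive degrees as a free resolution of $\Z$, so both the homotopy-uniqueness of the induced map on mapping cones and its being a homotopy equivalence when $h$ and $g$ are both equivalences are immediate consequences of \cref{lem:Cofibre}.

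For part~(3), the plan is to make a coherent, simultaneous choice of the auxiliary data on the $(K,L)$ and $(K,L')$ sides so that the two representing cocycles agree already in $C^3(\operatorname{Cone}(t_L);H_2(K))$. First observe that for $k_{K,L'}^{t'}$ to be defined, $t'$ must also be augmentation-preserving, so by~\cite[Corollary~5.2]{EilenbergMacLane-operators} we have $t\simeq t'$ and may work as if $t=t'$. I also use the implicit subcomplex compatibility $i_L = i_{L'}\circ h$, which is the natural setting in which the lemma is applied. Under these identifications $t_L = t\circ i_L = t'_{L'}\circ h$ on the nose, so the chain homotopy $\psi$ entering the construction of $(\id,h)$ via \cref{lem:Cofibre} can be taken to be $\psi = 0$, giving $(\id,h)_n(c,\ell) = (c, h_{n-1}\ell)$.

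Next, I apply \cref{lem:existence-alpha} coherently on both sides: take a single acyclic free resolution $F$ extending $K_*|_{[0,2]}$ and a single augmentation-preserving chain homotopy equivalence $\wh\alpha\colon C^\pi_*\to F$, so that $\alpha = \alpha'$. Let $D' = \{D'_i\colon L'_i\to F_{i+1}\}$ be a chain homotopy $\wh\alpha\circ t'_{L'}\simeq i_{L'}$ in $F$, producing $\phi' := d_3^F\circ D'_2$ as in the proof of \cref{lem:existence-alpha}. Pre-composition with $h$ turns $D'$ into a chain homotopy $\wh\alpha\circ t_L\simeq i_L$, which yields the compatible choice
\[\phi := \phi'\circ h_2 = d_3^F\circ D'_2\circ h_2\colon L_2\to Z_2(K),\qquad D_i := D'_i\circ h_i \text{ for } i = 0,1,\; D_2 := 0.\]

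With these coherent choices in place, a direct cochain-level computation yields
\[(\id,h)^*\theta_{K,L'}^{t'}(c,\ell) = \theta_{K,L'}^{t'}(c,h_2\ell) = \alpha_2 d_3^\pi c + \phi'(h_2\ell) = \alpha_2 d_3^\pi c + \phi(\ell) = \theta_{K,L}^t(c,\ell),\]
so the cocycles agree on the nose and therefore represent the same cohomology class. The main obstacle here is purely organisational: making the many layers of choice (the free resolution $C^\pi_*$, the extension $F$, the chain homotopy equivalence $\wh\alpha$, the chain homotopy $D'$, and the choice of $\psi$) compatible on both sides so that no correction is needed. Once this simultaneous setup is arranged, the required identity is immediate from the formula for $(\id,h)$ given in \cref{lem:Cofibre} and from the definition of the cocycles in \cref{cons:Relativek}.
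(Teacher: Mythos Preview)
Your treatment of the first two assertions is identical to the paper's: both invoke \cref{lem:Cofibre}. For the final assertion the paper writes only that the equality ``follows either from a rapid verification or from the next proposition'' (namely \cref{prop:kConjChain} with $f=\id$ and $F=\id$); your argument is exactly such a rapid verification, and you are right to flag the compatibility $i_L=i_{L'}\circ h$ as an implicit hypothesis---the paper's forward reference to \cref{prop:kConjChain} needs it too.

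There is one small circularity worth cleaning up. You pass from $t\simeq t'$ to ``work as if $t=t'$'', but this changes the cone $\operatorname{Cone}(t'_{L'})$, and knowing that $k_{K,L'}^{t'}$ transports correctly to $k_{K,L'}^{t}$ along the induced $(\id,\id)$ is precisely the $h=\id$, $L=L'$ instance of the statement you are proving. The fix is painless: do not reduce to $t=t'$; instead keep the chain homotopy $\psi\colon t_L\simeq t'_{L'}\circ h$ from the hypothesis and set $D_i:=\wh\alpha_{i+1}\psi_i+D'_ih_i$ on the $L$-side. This is a homotopy $\wh\alpha\circ t_L\simeq i_L$ in $F$, and it gives
\[
\phi=d_3^F D_2=\alpha_2\,d_3^\pi\,\psi_2+\phi'\circ h_2.
\]
Since $(\id,h)_3(c,\ell)=(c+\psi_2\ell,\,h_2\ell)$, one computes
\[
(\id,h)^*\theta_{K,L'}^{t'}(c,\ell)=\alpha_2 d_3^\pi c+\alpha_2 d_3^\pi\psi_2(\ell)+\phi'(h_2\ell)=\alpha_2 d_3^\pi c+\phi(\ell)=\theta_{K,L}^{t}(c,\ell),
\]
so the cocycles agree on the nose without any reduction. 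With this adjustment your argument is complete and is more explicit than the paper's, which simply points to the strictly more general \cref{prop:kConjChain}.
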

\begin{proof}
The existence of the map $(g,h)$ follows from Lemma~\ref{lem:Cofibre}, as does the assertion on homotopy equivalences.
We therefore focus on the last statement: when $K=K'$ and $g=\id_{C^\pi}$,   a representative for~$(g,h)=(\id_{C^\pi},h)$ is $\bsm \id_{C^\pi} & \psi \\ 0& h \esm$ (where~$\psi \colon t_L\simeq t_{L'} \circ h$ is a homotopy) and the equality~$(\id_{C^\pi},h)^*(k_{K,L'}^{t'})=k_{K,L}^t$ follows either from a rapid verification of from the next proposition.
\end{proof}

Given subcomplexes $L_* \leq K_*$ and $L_*' \leq K_*'$, the next proposition shows that the relative $k$-invariant serves as an obstruction to extending a given chain map $h \colon L_* \to L_*'$ to~$K_*|_{[0,3]}\to K_*'|_{[0,3]}$.

\begin{proposition}
\label{prop:kConjChain}
Let~$K_*$ and~$K_*'$ be augmented $\Z[\pi]$-chain complexes which are acyclic in dimensions~$<2$, let~$t \colon K_* \to C_*^\pi$ and~$t'\colon K_*' \to C_*^\pi$ be augmentation-preserving chain maps,
let~$i_L\colon L_*\to K_*$ and~$i_{L'}\colon L_*'\to K_*'$ be inclusions of subcomplexes such that~$L_i$ (resp. $L_i'$) is a summand of~$K_i$ (resp. $K_i'$) for each~$i$, and set~$t_L:=t \circ i_L$ and~$t'_{L'}:=t'\circ i_{L'}$.

For a chain map~$h\colon L_*\to L'_*$ with~$t_L\simeq t'_{L'}\circ h$, and a homomorphism~$F\colon H_2(K)\to H_2(K')$,  the following assertions are equivalent:
\begin{itemize}
\item There exists an augmentation-preserving chain map~$f\colon K_*|_{[0,3]}\to K_*'|_{[0,3]}$ with
\[f\circ i_L|_{[0,2]}\simeq i_{L'}|_{[0,2]}\circ h|_{[0,2]}\colon L_*|_{[0,2]}\to K_*',\]
and~$f_*=F\colon H_2(K)\to H_2(K')$,
\item The relative $k$-invariants satisfy
$$h^*(k_{K',L'}^{t'})=F_*(k_{K,L}^t),$$
where $h^*:=(\id_{C^\pi},h)^*$ is the map from Lemma~\ref{lem:Indepc}.
\end{itemize}
\end{proposition}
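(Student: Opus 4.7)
The plan is to prove both directions by working directly at the cocycle level. The forward direction produces an explicit coboundary witness from the chain map $f$ via chain homotopies coming from \cref{lem:existence-alpha} and \cref{lem:Cofibre}, while the reverse direction is an obstruction-theoretic construction of $f$ degree by degree in which the coboundary witness for the cohomological identity supplies the modifications needed to kill the obstruction that arises in degree~$3$.

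\textbf{Forward direction.} Using \cref{lem:existence-alpha}, I pick augmentation-preserving $\alpha\colon C_*^\pi|_{[0,2]}\to K_*|_{[0,2]}$ and $\phi\colon L_2\to Z_2(K)$, so that $\theta_{K,L}^t=(\alpha_2\circ d_3^\pi,\phi)$. I then set $\alpha':=f|_{[0,2]}\circ\alpha$ and apply the last statement of \cref{lem:existence-alpha} to obtain $\phi'\colon L'_2\to Z_2(K')$ such that $\theta_{K',L'}^{t'}=(\alpha'_2\circ d_3^\pi,\phi')$. Fix chain homotopies $D\colon \alpha\circ t_L\simeq i_L+\phi$, $D'\colon \alpha'\circ t'_{L'}\simeq i_{L'}+\phi'$, $E\colon f\circ i_L\simeq i_{L'}\circ h$ (in degrees $\le 2$), and $\psi\colon t_L\simeq t'_{L'}\circ h$ (used to define the map $(\id_{C^\pi},h)$ via \cref{lem:Cofibre}). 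In the difference $(\id_{C^\pi},h)^*\theta_{K',L'}^{t'}-F_*\theta_{K,L}^t$, the $C_*^\pi$-summand vanishes because $F[\alpha_2 d_3^\pi c]=[f_2\alpha_2 d_3^\pi c]=[\alpha'_2 d_3^\pi c]$; expanding $d_3^\pi\psi_2=(t_L)_2-(t'_{L'}\circ h)_2-\psi_1\circ d_2^L$ and substituting the chain-homotopy equations reduces the $L$-summand modulo $B_2(K')$ to $[(E_1+f_2 D_1-D'_1 h_1-\alpha'_2\psi_1)\circ d_2^L]$. Acyclicity of $K_*'$ in degrees $<2$ provides a lift $G\colon L_0\to K_2'$ of $E_0+f_1 D_0-D'_0 h_0-\alpha'_1\psi_0\in Z_1(K')=B_1(K')$, and the cochain
\[\gamma:=\bigl(0,\,-(E_1+f_2 D_1-D'_1 h_1-\alpha'_2\psi_1+G\circ d_1^L)\bigr)\in C^2(\operatorname{Cone}(t_L);H_2(K'))\]
can be shown to take values in $H_2(K')$ and to satisfy $\delta\gamma=(\id_{C^\pi},h)^*\theta_{K',L'}^{t'}-F_*\theta_{K,L}^t$.

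\textbf{Reverse direction.} Assume the cohomological identity and construct $f$ inductively. In degrees $0$ and $1$, set $f_i|_{L_i}:=(i_{L'})_i\circ h_i$ and extend over a complement of $L_i\subset K_i$: augmentation-preservingly in degree $0$, then by lifting $f_0\circ d_1^K$ to $K'_1$ in degree $1$ (using augmentation preservation and acyclicity of $K_*'$ in degree $0$). For $f_2$, set $f_2|_{L_2}:=(i_{L'})_2\circ h_2$, lift $f_1\circ d_2^K$ on a complement of $L_2$ by acyclicity in degree $1$, and adjust $f_2$ by a map into $Z_2(K')$ to arrange $f_*=F$. With $\alpha':=f|_{[0,2]}\circ\alpha$ and a corresponding $\phi'$ from \cref{lem:existence-alpha}, the computation of the forward direction identifies the cocycle $(\id_{C^\pi},h)^*\theta_{K',L'}^{t'}-F_*\theta_{K,L}^t$ with the obstruction to simultaneously defining $f_3\colon K_3\to K_3'$ satisfying $d_3^{K'}\circ f_3=f_2\circ d_3^K$ and extending the chain homotopy $E$ to degree $2$. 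This cocycle is a coboundary $\delta(\gamma_C,\gamma_L)$ by hypothesis (invoking \cref{lem:kWellDef} to pass from cohomology classes to cocycles); interpreting $\gamma_C$ and $\gamma_L$ as modifications of $D'$ and $E$ (i.e., alterations by cycle-valued maps in the relevant degrees) reduces the obstruction to a boundary, after which $f_3$ and the extended chain homotopy $E_2$ can be obtained by lifting.

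\textbf{Main obstacle.} The key technical hurdle is the final step of the reverse direction, where the coboundary witness $(\gamma_C,\gamma_L)$ must be converted into concrete modifications of $D'$, $E$, and $\alpha'$ so that the degree-$3$ chain-map equation and the extension of $E$ to degree $2$ become simultaneously solvable. The subtlety is that a single cohomological equation in $H^3(\operatorname{Cone}(t_L);H_2(K'))$ must produce adjustments of several lower-degree chain homotopies at once, and matching $\gamma_C$ and $\gamma_L$ to these pieces requires careful tracking that combines the acyclicity of $K_*'$ in degrees $<2$ with the summand hypothesis on the $L_i$.
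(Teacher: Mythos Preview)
Your forward direction is essentially the paper's argument: set~$\alpha':=f\circ\alpha$, take~$\phi'$ from the last sentence of \cref{lem:existence-alpha}, and verify by direct expansion that the difference of cocycles is a coboundary. Fine.

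The reverse direction, however, takes a genuinely different route from the paper and contains a gap. The paper does \emph{not} build~$f$ by extending~$i_{L'}\circ h$ degree by degree. Instead it writes down~$f$ explicitly: $f_i:=\alpha'_i\circ t_i$ for~$i=0,1$ and $f_2:=\alpha'_2\circ t_2+g'_\pi\circ t_2+F_2$, where~$g'_\pi$ is a lift of the~$C_2^\pi$-component of the coboundary witness and~$F_2\colon K_2\to Z_2(K')$ is built from the map~$E_2:=\id-\alpha_2\circ t_2-D_1\circ d_2^K\colon K_2\to Z_2(K)$. The extension to degree~$3$ is then obtained by a separate direct computation showing $\operatorname{proj}'\circ f_2\circ d_3^K=0$, which uses this specific form of~$f_2$ in an essential way.

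Your claim that ``the computation of the forward direction identifies the cocycle $(\id_{C^\pi},h)^*\theta_{K',L'}^{t'}-F_*\theta_{K,L}^t$ with the obstruction to simultaneously defining~$f_3$ and extending~$E$'' does not type-check. The cocycle lives on~$\operatorname{Cone}(t_L)_3=C_3^\pi\oplus L_2$, whereas the obstruction to constructing~$f_3$ with $d_3^{K'}\circ f_3=f_2\circ d_3^K$ is the map $K_3\to H_2(K')$, $x\mapsto [f_2\circ d_3^K(x)]$. There is no~$K_3$-summand in~$\operatorname{Cone}(t_L)_3$, and the forward-direction calculation you carried out (which produced zero on the~$C_3^\pi$-summand) says nothing about~$K_3$. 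Your proposed remedy---using~$\gamma_C$ to modify~$D'$---does not address this: altering the homotopy $\alpha'\circ t'_{L'}\simeq i_{L'}+\phi'$ does not change~$f_2$ and hence cannot move~$f_2\circ d_3^K$ into~$B_2(K')$. What is actually needed is to use a lift~$g'_\pi$ of~$\gamma_C$ to modify~$f_2$ itself (by adding~$g'_\pi\circ t_2$), and then a nontrivial calculation---parallel to the paper's final displayed computation---is required to see that the resulting~$f_2\circ d_3^K$ lands in~$B_2(K')$. Without this, the existence of~$f_3$ is unproven.
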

\begin{proof}
We begin with the ``only if"-direction: we assume that $f$ exists and prove the equality involving relative $k$-invariants.
Let~$\alpha\colon C_*^\pi|_{[0,2]}\to K_*|_{[0,2]}$ and $\phi\colon L_2\to Z_2(K)$ be as in Lemma~\ref{lem:existence-alpha} (i.e. with $\alpha \circ t_L \simeq i_L+\phi$),  set~$\alpha':=f\circ \alpha\colon  C_*^\pi|_{[0,2]}\to K'_*|_{[0,2]}$ , choose a homotopy $\psi \colon t_L \simeq t'_{L'} \circ h$,  apply the last sentence of Lemma~\ref{lem:existence-alpha} to obtain a map~$\phi' \colon L_2\to Z_2(K)$ with $\alpha' \circ t'_{L'} \simeq i_{L'}+\phi'$ (this is possible because $f$ is augmentation-preserving) and consider the following diagram:
\begin{equation}
\label{eq:NecessaryCondition}
\xymatrix{
	C_3^\pi\oplus L_2\ar[r]^{(\alpha_2\circ d_3^\pi,\phi) }\ar[d]_{(\id_{C^\pi},h):=\bsm \id & \psi_2 \\ 0&h_2\esm}
	&Z_2(K)\ar[r]
	&H_2(K)\ar[d]^{f_*}\\
	C_3^\pi\oplus L'_2\ar[r]^{(\alpha_2'\circ d_3^\pi,\phi')}
	&Z_2(K')\ar[r]&H_2(K').
}
\end{equation}
A direct calculation shows that  this diagram commutes up to a coboundary:
\begin{align*}
\begin{pmatrix}
\alpha_2'\circ d_3^\pi&\phi'
\end{pmatrix}
\begin{pmatrix}
\id & \psi_2 \\ 0 & h_2
\end{pmatrix}
&=\begin{pmatrix}
\alpha_2' \circ d_3^\pi &  \alpha_2' \circ d_3^\pi \circ \psi_2+\phi'\circ h_2
\end{pmatrix} \\
&=\begin{pmatrix}
f \circ \alpha_2 \circ d_3^\pi &  \alpha_2' \circ t_L-\alpha_2'\circ t'_{L'} \circ h +\phi'\circ h_2
\end{pmatrix} 
+
\begin{pmatrix}
0 & \alpha_2'\circ \psi_1 \circ d_2^L
\end{pmatrix}  \\
&=\begin{pmatrix}
f \circ \alpha_2 \circ d_3^\pi & f \circ i_L+f\circ \phi -i_{L'} \circ h 
\end{pmatrix} 
+
\begin{pmatrix} 0& - \alpha_2'\circ \psi_1 \end{pmatrix} \begin{pmatrix} d_3^\pi &(t_L)_2 \\ 0&-d_2^L \end{pmatrix}
 \\
&=\begin{pmatrix}
f \circ \alpha_2 \circ d_3^\pi &f\circ \phi
\end{pmatrix}
-
\begin{pmatrix} 0&  \alpha_2'\circ \psi_1 \end{pmatrix} \begin{pmatrix} d_3^\pi &(t_L)_2 \\ 0&-d_2^L \end{pmatrix}.
\end{align*}
The third equality uses $\alpha \circ t_L \simeq i_{L}+\phi$ and $\alpha' \circ t'_{L'} \simeq i_{L'}+\phi'$,  the fourth uses $f\circ i_L|_{[0,2]}\simeq i_{L'}|_{[0,2]}\circ h|_{[0,2]}$.
In both cases, we are also relying on the fact that the equalities take place in~$H_2(K').$

Since the top composition in~\eqref{eq:NecessaryCondition} is~$\theta_{K,L}^t$, which represents $k_{K,L}^t$ and the bottom composition is~$\theta_{K',L'}^{t'}$, which represents $k_{K',L'}^{t'}$,  it follows that $h^*(k_{K',L'}^{t'})=F_*(k_{K,L}^t)$ as claimed.

We now show the ``if"-direction. 
The strategy of the proof, which is a relative version of~\cite[Theorem 7.1]{EilenbergMacLane-operators} and~\cite[Theorem 4]{MacLaneWhitehead} is as follows.
We define intermediate maps~$g$ and~$F_2$ that will allow us to construct~$f_i$ in degree~$0,1,2$.
We then argue that~$f_2$ extends over the~$3$-skeleton and then conclude by showing that the resulting chain map~$f$ satisfies the required properties.

We construct the map~$g.$
Let~$\theta_{K,L}^t$ and~$\theta_{K',L'}^{t'}$ respectively be the representatives of~$k_{K,L}^t$ and~$k_{K',L'}^{t'}$ from \cref{def:k-inv}.
Since we assumed~$F_*(k_{K,L}^t)-h^*(k_{K',L'}^{t'})=0 \in H^3(\operatorname{Cone}(t_L);H_2(K'))$, on the chain level,  the representative cocycle~$F \circ \theta_{K,L}^t-\theta_{K',L'}^{t'} \circ \bsm \id & \psi \\ 0 & h \esm$ (where $\psi \colon t_L \simeq t_L' \circ h$ is a homotopy) is in fact a coboundary, meaning that there exists a map 
\[m' \colon  \operatorname{Cone}(t_L)_2 \to H_2(K') \quad \text{with} \quad
m'\circ d^{\operatorname{Cone}(t_L)}_3=F \circ \theta_{K,L}^t-\theta_{K',L'}^{t'} \circ \bsm \id & \psi \\ 0 & h \esm \colon \operatorname{Cone}(t_L)_3\to H_2(K').\]
Since~$\operatorname{Cone}(t_L)_2$ is a free~$\Z[\pi]$-module,~$m'$ lifts to a map~$g' \colon  \operatorname{Cone}(t_L)_2 \to Z_2(K')$ and we set
$$g_\pi'\colon C^\pi_2\xrightarrow{\incl_L} \operatorname{Cone}(t_L)_2 \xrightarrow{g'}Z_2(K').$$
We construct the map~$F_2$.
Consider the
chain maps~$\alpha\colon C_*^\pi|_{[0,2]}\to K_*|_{[0,2]}$ and~$\alpha'\colon C_*^\pi|_{[0,2]}\to~K_*'|_{[0,2]}$ with~$\alpha\circ t_L\simeq i_{L}+\phi\colon L_*|_{[0,2]}\to K_*$ (say via a chain homotopy~$D_L$) and~$\alpha'\circ t'_{L'}\simeq i_{L'}+\phi'\colon L'|_{[0,2]}\to K'$
underlying the definition of $\theta_{K,L}^t$ and $\theta_{K',L'}^{t'}$.
By \cref{lem:EilenbergMaclane51},~$\alpha\circ t\simeq \id_{K}\colon K_*|_{[0,1]}\to K_*$ and there exists a chain homotopy~$D$ such that~$D\circ i_L$ is part of the chain homotopy~$D_L\colon \alpha\circ t_L\simeq i_{L}+\phi\colon L_*|_{[0,2]}\to K_*$.
Consider the map
$$ E_2 \colon K_2 \to  Z_2(K),  \quad E_2:=\id-\alpha_2 \circ t_2-D_1 \circ d^K_2.$$
Note that the composition~$L_2\xrightarrow{(i_L)_2}K_2\xrightarrow{E_2}Z_2(K)\to H_2(K)$ equals $-\phi$:
\[\phi+E_2\circ (i_L)_2
=\phi+(i_L)_2-\alpha_2 \circ (t_L)_2-D_1 \circ d^K_2 \circ (i_L)_2
=(i_L)_2+\phi-\alpha_2 \circ (t_L)_2-(D_L)_1 \circ d_2^L
=d_3^K \circ (D_L)_2.\]
Since~$Z_2(K') \to H_2(K')$ is surjective and~$K_2$ is a free~$\Z[\pi]$-module,  there is a map
$$F_2 \colon K_2 \to Z_2(K')$$
making the following diagram commute:
$$
\xymatrix{
	K_2\ar[r]^-{E_2}\ar@{-->}[rd]^{F_2}&Z_2(K)\ar[r]^{\proj}&H_2(K)\ar[d]^F \\
	&Z_2(K')\ar[r]^{\proj'}&H_2(K').
}
$$
Since~$L_2\xrightarrow{(i_L)_2}K_2\xrightarrow{E_2}Z_2(K)\to H_2(K)$ is $-\phi$, we can assume that~$F_2$ is $-F \circ \phi$ on the summand~$L_2$ of~$K_2$, i.e.~$\proj'\circ F_2\circ (i_L)_2=-F \circ \phi$.
For later use, it will be helpful to rewrite this expression.
Recall that $m' \colon  \operatorname{Cone}(t_L)_2 \to H_2(K')$ lifts to $g'=(g'_\pi,g_L') \colon  \operatorname{Cone}(t_L)_2 \to K_2'$.
Expand the equation~$m'\circ d^{\operatorname{Cone}(t_L)}_3=F \circ \theta_{K,L}^t-\theta_{K',L'}^{t'} \circ \bsm \id & \psi \\ 0 & h \esm \colon \operatorname{Cone}(t_L)_3\to H_2(K')$,  and look at the second coordinate of the outcome:
$$ \proj'\circ F_2\circ (i_L)_2
=-F \circ \phi
=-\proj'\circ (\phi' \circ h_2-\alpha_2' \circ d_3^\pi \circ \psi_2+g'_\pi \circ (t_L)_2 -g_L' \circ d^L_2) \colon L_2 \to H_2(K').$$
Now lift to $Z_2(K')$ to obtain, for some homomorphism $\varpi \colon L_2 \to K_3'$, the expression
$$ F_2\circ (i_L)_2
=-\phi' \circ h_2+\alpha_2' \circ d_3^\pi \circ \psi_2-g'_\pi \circ (t_L)_2 +g_L' \circ d^L_2 + d_3^K \circ \varpi \colon L_2 \to Z_2(K').$$
Now that we have defined~$g_\pi'$ and~$F_2$, we are ready to define~$f_0,f_1$ and~$f_2$.	
We proceed by slight modifying the argument from~\cite[Proof of Theorem~7.1]{EilenbergMacLane-operators} (see also \cite[Proof of Theorem~4]{MacLaneWhitehead}): on the~$2$-skeleton of our chain complexes,  the required chain map is defined by
\begin{align*}
	&f_i := \alpha_i' \circ t_i\colon K_i \to K_i' \quad \text{ for } i=0,1,\\
	&f_2 := \alpha_2' \circ t_2  +g_\pi' \circ t_2 +F_2 \colon K_2 \to K_2'.
\end{align*}
We verify that~$f\circ \iota_L \simeq \iota_{L'} \circ h$ as maps~$L_*|_{[0,2]}\to K_*$,  then check~$f_2$ induces~$F$ on~$H_2(K)$, and finally confirm that~$f_2$ extends to the~$3$-skeleta of the chain complexes.		

We begin by showing that~$f\circ \iota_L \simeq \iota_{L'} \circ h$.
Starting from the definition of $f$ and our calculation of $F_2\circ (i_L)_2$,  we consider the following sequence of chain homotopies of maps~$L_*|_{[0,2]}\to K_*$:
\begin{align*}
f\circ \iota_L
&=\alpha' \circ t_L +g_\pi' \circ (t_L)_2 +F_2 \circ (i_L)_2 \\
&= \alpha' \circ t_L +g_\pi' \circ (t_L)_2-(\phi' \circ h_2-\alpha_2' \circ d_3^\pi \circ \psi_2+g'_\pi \circ (t_L)_2-g_L' \circ d^L_2 -d_3^K \circ \varpi) \\
%%%
&= \alpha' \circ t_L +\alpha_2' \circ d_3^\pi \circ \psi_2-\phi' \circ h_2+g_L' \circ d^L_2 +d_3^K \circ \varpi \\
%%%%
&\simeq \alpha' \circ t_L +\alpha_2' \circ d_3^\pi \circ \psi_2-\phi' \circ h_2+g_L' \circ d^L_2  \\
%%%%
&\simeq \alpha' \circ t_L +\alpha_2' \circ d_3^\pi \circ \psi_2-\phi' \circ h_2 \\
&\simeq  \alpha' \circ t'_{L'} \circ h -\phi' \circ h_2 \\
&\simeq i_{L'} \circ h.
\end{align*}
The first chain homotopy, which removes $d_3^K \circ \varpi$,  is~$-\varpi \colon L_2 \to K_3'$ in degree $2$ and the zero map in all other degrees.
The second chain homotopy, which removes~$g_L' \circ d^L_2$,  is~$g_L' \colon L_1 \to K_2'$ in degree~$1$ and the zero map in all other degrees; this uses that $g_L'$ takes values in $Z_2(K')$ so that the map on~$L_1$ remains unchanged under the homotopy.
The third chain homotopy is~$-\alpha_i'\circ\psi_i$ in degrees~$i=0,1$ and the zero map in degree $2$; here recall that~$\psi \colon t_L \simeq t_L' \circ h$.
The final chain homotopy is a consequence of the chain homotopy~$\alpha' \circ t_{L'}' \simeq i_{L'}+\phi'$. 

\medskip

We verify that the chain map~$f_2$ induces~$F$ on~$H_2(K)$ i.e.\  that for every cycle~$z \in Z_2(K)$ we have~$[f_2(z)]=F([z])$.
Note that since~$[t_2(z)] \in H_2(C^\pi)=0$, there exists a~$c \in C_3^\pi$ with~$d^\pi_3 (c)= t_2(z)$.
Using the definitions of~$f_2,F_2,E_2$ and~$g_\pi'$  we obtain:
\begin{align*}
	[f_2(z)]
	&=[(\alpha_2' \circ t_2 +g_\pi' \circ t_2+F_2)(z)] \\
	&=[\alpha_2' \circ d^\pi_3(c)
	+g_\pi' \circ d^\pi_3(c)]
	+F([E_2(z)]) \\
	%%%
	&=[\theta_{K'}(c)
	+m' \circ \incl_L \circ d^\pi_3(c)]
	+F \circ \proj \circ (\id-\alpha_2 \circ t_2-D_1 \circ d^K_2)(z) \\
	%%%
	%%%
	&=[\theta_{K'}(c)
	+m' \circ d^{\operatorname{Cone}(t_L)}_3 \circ \incl_L(c)]
	+F([z])-F([\alpha_2 \circ d_3^\pi(c)]) \\
	%%%
	&=[\theta_{K'}(c)
	+(F \circ \theta_{K,L}^t-\theta_{K',L'}^{t'} \circ \bsm \id & \psi \\ 0 & h \esm)(\incl_L(c))]
	+F([z])-[F(\theta_K(c))] \\
	%%%%
	&=[\theta_{K'}(c)
	+(F \circ \theta_K-\theta_{K'})(c)]
	+F([z])-[F(\theta_K(c))] \\
	%%%
	&=F([z]).
\end{align*}
We now prove that~$f_2$ extends to a map~$K_3 \to K_3'$.
First, a near identical calculation to the above shows that~$[f_2 \circ d^K_3(c)]=0$ for all~$c\in K_3$. 
To see this, write~$\proj\colon Z_2(K)\to H_2(K)$ and~$\proj'\colon Z_2(K')\to H_2(K')$ for the projections and calculate
\begin{align*}
	\proj' \circ f_2 \circ d^K_3
	&=\proj' \circ ( \alpha_2' \circ t_2 +g'_\pi  \circ  \incl_L \circ t_2+F_2)\circ d^K_3 \\
	&=\proj'\circ \alpha_2'  \circ  d^\pi_3  \circ  t_3+
	m'  \circ  \incl_L \circ t_2 \circ d^K_3+
	F \circ \proj  \circ  E_2 \circ d^K_3 \\
	%%%
	&=[\theta_{K'} \circ t_3+
	m'  \circ \incl_L  \circ  d^\pi_3  \circ  t_3]+
	F \circ \proj  \circ  (\id-\alpha_2 \circ  t_2-D_1 \circ d_2^K ) \circ d^K_3 \\
	%%%
	&=[\theta_{K'} \circ t_3+
	m'  \circ  d^{\operatorname{Cone}(t_L)}_3 \circ \incl_L \circ t_3]-
	F  \circ  \proj  \circ  \alpha_2  \circ  t_2  \circ  d^K_3 \\
	%%%%
	&=[\theta_{K'} \circ t_3+
	\left(F \circ \theta_{K,L}-\theta_{K',L'} \circ \bsm \id & \psi \\ 0 & h \esm\right) \circ  \incl_L  \circ  t_3]-
	F([\theta_K \circ  t_3]) \\
	%%%%
	&=[\theta_{K'} \circ t_3+
	((F \circ \theta_K-\theta_{K'}) \circ t_3)]-
	F([\theta_K \circ t_3]) \\
	&=0.
\end{align*}
It follows that~$f_2 \circ d^K_3(K_3) \subset  B_2(K')$.
Since~$K_3$ is free and~$d^{K'}_3 \colon K_3' \to  B_2(K')$ is surjective,  there is a map~$f_3 \colon K_3 \to K_3'$ with~$d_3^{K'}  \circ f_3=f_2  \circ d^K_3$.
\end{proof}

We conclude this section with a technical result that we will require later on.
To do so, recall that given a homomorphism $u \colon \pi \to \pi'$ and a~$\Z[\pi']$-module~$H$, we write~$\Res_u H$ for the~$\Z[\pi]$-module whose underlying abelian group is~$H$, but with~$\pi$-action given by~$\gamma \cdot x=u(\gamma)x$.

\begin{lemma}
\label{lem:IgnoreRes}
Let~$u \colon \pi \to \pi'$ be a group isomorphism,  let~$K_*'$ be an augmented~$\Z[\pi']$-chain complex that is acyclic in dimensions~$<2$,  let~$i_{L'} \colon L_*'\to K_*'$ be the inclusion of a subcomplex,
let~$C^{\pi'}$ be a free~$\Z[\pi']$-resolution of~$\Z$,  let~$t'\colon K_*' \to C_*^{\pi'}$ be a~$\Z[\pi']$-chain map that preserves augmentations and set $t'_{L'}:=t' \circ i_{L'}.$

The~$\Z[\pi]$-chain complex~$\Res_u(K_*')$ is acyclic in dimensions~$<2$ and,  considering the~$\Z[\pi]$-chain map~$\Res_u(t')\colon \Res_u(K_*') \to \Res_u(C_*^{\pi'})$, there is a canonical group isomorphism
$$  H^3(\operatorname{Cone}(\Res_u(t'_{L'})); H_2(\Res_u K'))
 \xrightarrow{\cong}  H^3(\operatorname{Cone}(t'_{L'}); H_2(K'))~$$
 that takes~$k_{(\Res_u(K'),\Res_u(L'))}^{\Res_u(t')}$ to~$k_{(K',L')}^{t'}$.
\end{lemma}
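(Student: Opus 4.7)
The plan is to observe that everything is formal: the isomorphism $u \colon \pi \to \pi'$ extends to a ring isomorphism $\Z[\pi] \xrightarrow{\cong} \Z[\pi']$, which I will also denote by $u$, so $\Res_u$ is an equivalence of categories between $\Z[\pi']$-modules and $\Z[\pi]$-modules with quasi-inverse $\Res_{u^{-1}}$, preserving the underlying abelian group. In particular $\Res_u$ sends free modules to free modules, commutes with the formation of mapping cones and of (sub)chain complexes, and preserves exactness. This immediately gives the first claim, because $H_i(\Res_u K')$ and $H_i(K')$ have the same underlying abelian group, so $\Res_u K'_*$ is acyclic in dimensions $<2$. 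It also shows that $\Res_u C_*^{\pi'}$ is a free $\Z[\pi]$-resolution of $\Z$ and that $\Res_u(t')$ is an augmentation-preserving chain map, so the input data on the~$\Z[\pi]$-side is valid.

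Next I would construct the isomorphism on cohomology. Since $\Res_u$ is an equivalence of categories, for $\Z[\pi']$-modules $M,N$ the restriction map gives a natural equality
\[\Hom_{\Z[\pi']}(M,N)=\Hom_{\Z[\pi]}(\Res_u M,\Res_u N)\]
of abelian groups. Applied degreewise with $M=\operatorname{Cone}(t'_{L'})_*$ and $N=H_2(K')$, and using that $\Res_u$ commutes with mapping cones and with $H_2$, this yields an equality of cochain complexes
\[\Hom_{\Z[\pi']}\bigl(\operatorname{Cone}(t'_{L'})_*,H_2(K')\bigr)=\Hom_{\Z[\pi]}\bigl(\operatorname{Cone}(\Res_u t'_{L'})_*,H_2(\Res_u K')\bigr),\]
and hence a canonical identification of their degree-$3$ cohomology groups.

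Finally I would check that this identification takes $k_{(\Res_u K',\Res_u L')}^{\Res_u t'}$ to $k_{(K',L')}^{t'}$. Choose data $\alpha\colon C_*^{\pi'}|_{[0,2]}\to K_*'|_{[0,2]}$ and $\phi\colon L'_2\to Z_2(K')$ as in \cref{lem:existence-alpha} on the~$\Z[\pi']$-side. Restricting yields~$\Res_u\alpha$ and~$\Res_u\phi$, which one verifies to be valid data on the~$\Z[\pi]$-side: the requisite chain homotopy~$\Res_u\alpha\circ\Res_u t'_{L'}\simeq i_{\Res_u L'}+\Res_u\phi$ is the restriction of the homotopy on the~$\Z[\pi']$-side, since restriction preserves chain homotopies. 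By \cref{cons:Relativek}, the representing cocycle $\theta_{\Res_u K',\Res_u L'}^{\Res_u t'}$ is then literally the same map of underlying abelian groups as $\theta_{K',L'}^{t'}$, so the two $k$-invariants correspond under the identification above. I do not expect a significant obstacle here: everything reduces to the tautological observation that restriction along a ring isomorphism is an equivalence of categories that does nothing to underlying abelian groups; the only care needed is to consistently rewrite mapping cones, homologies, and chain homotopies via~$\Res_u$.
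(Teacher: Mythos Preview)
Your proposal is correct and follows essentially the same approach as the paper: the paper's proof is a terse three sentences observing that restriction of scalars leaves chain groups and differentials unchanged, so $\operatorname{Cone}(\Res_u(t'_{L'}))=\Res_u\operatorname{Cone}(t'_{L'})$ and the $\Hom$-complexes (hence cohomology groups and $k$-invariants) are literally identified. Your write-up spells out the same idea in more detail, explicitly verifying that the data $(\alpha,\phi)$ transport under $\Res_u$, which is a welcome elaboration but not a different argument.
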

\begin{proof}
Restriction of scalars does not change the chain groups and differentials of a chain complex.
Since~$\operatorname{Cone}(\Res_u(t'_{L'}))=\Res_u\operatorname{Cone}(t'_{L'})$,  one can identify~$\Hom_{\Z[\pi]}(\operatorname{Cone}(\Res_u(t'_{L'})),\Res_u H_2(K'))$ with~$\Hom_{\Z[\pi]}(\operatorname{Cone}(t'_{L'}),H_2(K'))$.
The claim about the~$k$-invariants also follows immediatly.
\end{proof}

\begin{remark}
Since restriction of scalars is manifestly an exact functor, we will often identify the~$\Z[\pi]$-modules~$\Res_u H_2(K')$ and~$H_2(\Res_u K')$, as well as~$H^3(\operatorname{Cone}(\Res_u(t')); H_2(\Res_u K'))$ and~$H^3(\operatorname{Cone}(\Res_u(t')); \Res_u H_2(K'))$, as abelian groups.
\end{remark}

\section{Relative $k$-invariants of CW pairs.}
\label{sub:kinvariantCW}

This section defines the $k$-invariant of a CW pair and describes some of its main properties.

\begin{notation}
In what follows, for a CW pair~$(X,Y)$ with basepoint $x \in Y$,  universal cover $\widetilde{X}$ and restricted cover~$\widetilde{Y}$, we fix a preimage $\wt x \in \wt Y$ of $x$.
The complex~$C_*(\widetilde{X})$ comes with a natural augmentation obtained by basing~$C_0(\widetilde{X})$ with any lifts~$\widetilde{v}_0,\ldots, \widetilde{v}_n$ of the~$0$-cells~$v_0,\ldots,v_n$ of~$X$ and setting~$\aug(\gamma\widetilde{v}_i)=1$ for every~$\gamma \in \pi.$
Using the basepoint~$\wt x$, the Hurewicz theorem leads to an isomorphism~$\pi_2(X) \cong  H_2(\widetilde{X})\cong H_2(C_*(\widetilde{X}))$.
	\end{notation}
	
	Section~\ref{sub:SetUpRelativeKCW} focuses on some lemmas that are required to establish properties of the $k$-invariant of a CW pair.
	Section~\ref{sub:kCWDef} defines the $k$-invariant of a CW pair.
	
	\subsection{Set-up and lemmas}
	\label{sub:SetUpRelativeKCW}

We prove several lemmas that will be of use to define the $k$-invariant of a CW pair.
First, we note that we are in the setting of Section~\ref{sub:PrepKInvariant}.

\begin{lemma}
Given a CW complex $X$ and a free $\Z[\pi_1(X)]$-resolution~$C^{\pi_1(X)}_*$ of~$\Z$, there is a chain map $t \colon C_*(\widetilde{X}) \to C^{\pi_1(X)}_*$ that preserves augmentations.
\end{lemma}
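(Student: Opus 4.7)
The plan is to invoke the standard comparison theorem for free resolutions, after verifying that $C_*(\widetilde X)$ with the stated augmentation fits into the framework of Section~\ref{sub:PrepKInvariant}. Concretely, I would first check that $(C_*(\widetilde X), \aug)$ is an augmented $\Z[\pi_1(X)]$-chain complex in the sense of that section. The augmentation is $\pi_1(X)$-invariant because $\aug(\gamma\wt v_i) = 1 = \aug(\wt v_i)$ by definition. Moreover $\aug \circ d_1 = 0$: the boundary of any lifted $1$-cell of $\wt X$ has the form $\wt v - \wt v'$ where $\wt v$ and $\wt v'$ both lie in the $\pi_1(X)$-orbit of the chosen lifts $\wt v_0,\ldots,\wt v_n$ of the $0$-cells, and therefore both augment to $1$.

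Next, I would observe that each $C_i(\widetilde X)$ is a free $\Z[\pi_1(X)]$-module, with basis obtained by choosing one lift to $\widetilde X$ of each $i$-cell of $X$, and that $C^{\pi_1(X)}_*$ is by hypothesis a free resolution of $\Z$, hence an acyclic augmentable $\Z[\pi_1(X)]$-chain complex in the sense of the paper. The comparison theorem for free resolutions, in the form of \cite[Proposition~5.1]{EilenbergMacLane-operators} invoked in the paragraph preceding \cref{lem:existence-alpha}, then produces an augmentation-preserving chain map $t \colon C_*(\widetilde X) \to C^{\pi_1(X)}_*$ by induction on degree: one starts from the identity $\Z\to\Z$ on augmentations and inductively lifts in degree $n$ using the freeness of $C_n(\widetilde X)$ together with the acyclicity of $C^{\pi_1(X)}_*$ in positive degrees.

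There is no real obstacle here; the lemma is essentially the statement that the general framework of Section~\ref{sub:PrepKInvariant} applies to the cellular chain complex of a universal cover. The only point that requires any thought at all is the verification that the chosen augmentation on $C_*(\widetilde X)$ is compatible with the boundary map, which reduces to the elementary remark about endpoints of lifted $1$-cells made above.
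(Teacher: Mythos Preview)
Your argument is correct and is in fact more direct than the paper's. You apply the comparison theorem \cite[Proposition~5.1]{EilenbergMacLane-operators} immediately to the pair $(C_*(\widetilde X),\aug)$ and the acyclic complex $C_*^{\pi_1(X)}$, after checking that the former is an augmented free $\Z[\pi_1(X)]$-complex. The paper instead takes a geometric detour: it first picks a model of $B\pi_1(X)$ containing $X$ as a subcomplex, uses the inclusion-induced chain map $C_*(\widetilde X)\hookrightarrow C_*(\widetilde{B}\pi_1(X))$, and then composes with an augmentation-preserving chain homotopy equivalence $C_*(\widetilde{B}\pi_1(X))\simeq C_*^{\pi_1(X)}$. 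Of course this last equivalence is itself obtained from the same comparison theorem, so the paper's route is not genuinely independent of yours---it just factors through a geometric intermediary. Your approach has the virtue of being self-contained and purely algebraic; the paper's has the minor advantage that the map $t$ visibly arises from a topological classifying map $X\to B\pi_1(X)$, which aligns with how $t$ is later compared to the maps $\nu_*$ induced by $\nu\colon X\to B\pi_1(X)$.
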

\begin{proof}
Pick a model~$B\pi_1(X)$ with the same~$0$-skeleton as~$X$ and containing $X$ as a subcomplex, choose a chain homotopy equivalence~$C_*(\widetilde{B}\pi_1(X)) \simeq C^{\pi_1(X)}_*$ that preserves augmentations, and consider the composition~$t \colon C_*(\widetilde{X}) \to  C_*(\widetilde{B}\pi_1(X)) \simeq C^{\pi_1(X)}_*.$
\end{proof}

The next two lemmas will be used to study the relative $k$-invariant and its properties.

\begin{lemma}
\label{lem:InducedMapkInvariants}
Let $\pi$ and $\pi'$ be groups, let~$Y, Y'$ be CW complexes, and let~$\nu \colon Y\to B\pi$ as well as~$\nu'\colon Y'\to B\pi'$ be cellular maps.
Given a map~$h\colon Y \to Y'$, an isomorphism $u \colon \pi \to \pi'$ such that~$u\circ \nu_*=\nu'_*\circ h_*$,  and a map~$v \colon B\pi \to B\pi'$ realising $u$, there is, up to homotopy, a unique chain map
$$(u,h)^v \colon C_*(\widetilde{M}(\nu),\widetilde{Y}) \to \Res_u C_*(\widetilde{M}(\nu'),\widetilde{Y}')$$
 that makes the following diagram commute:
\[
\xymatrix{
C_*(\widetilde{B}\pi) \ar[d]^{v_*}\ar[r] & C_*(\widetilde{M}(\nu),\widetilde{Y})  \ar@{-->}[d]^-{(u,h)^v}\ar[r]& C_{*-1}(\widetilde{Y}) \ar[d]^{h_*}
\\
\Res_u C_*(\widetilde{B}\pi') \ar[r]& \Res_u C_*(\widetilde{M}(\nu'),\widetilde{Y}')  \ar[r]& \Res_u C_{*-1}(\widetilde{Y}').
}
\]
A homotopy equivalence $v$ as above exists, is unique up to homotopy,  and if~$v \simeq v'$ are homotopic, then $(u,h)^v \simeq (u,h)^{v'}$.
In particular, for any $\Z[\pi']$-module $A$,  the pair $(u,h)$ induces a group homomorphism 
	$$(u,h)^*\colon H^3(M(\nu'),Y';A)\to H^3(M(\nu),Y;\Res_u A).$$
\end{lemma}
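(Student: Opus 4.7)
The plan is to reduce the construction of $(u,h)^v$ to an application of \cref{lem:Cofibre}, via the canonical identification of the relative chain complex $C_*(\widetilde{M}(\nu),\widetilde{Y})$ with the mapping cone of $\nu_* \colon C_*(\widetilde{Y}) \to C_*(\widetilde{B}\pi)$. This identification comes from the natural CW structure on the mapping cylinder, in which every cell of $M(\nu)$ not lying in $Y$ is either a cell of $B\pi$ or a product cell $e \times I$ for an $(n-1)$-cell $e$ of $Y$; the resulting splitting matches exactly the $K_n \oplus L_{n-1}$ bigrading of $\operatorname{Cone}(\nu_*)$, and a direct inspection of the boundary map (using the identification $(y,1)\sim\nu(y)$) recovers the cone differential.

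For the existence and uniqueness of $v$, I would appeal to the standard fact that pointed homotopy classes of maps $B\pi \to B\pi'$ are in bijection with $\Hom(\pi,\pi')$ whenever the target is aspherical. This provides a cellular $v$ realizing $u$, unique up to pointed homotopy; Whitehead's theorem then makes $v$ a homotopy equivalence since $u$ is a group isomorphism. Given such a (cellular) $v$, I would choose basepoint-preserving lifts $\widetilde{v}\colon\widetilde{B}\pi \to \widetilde{B}\pi'$ and $\widetilde{h}\colon \widetilde{Y}\to\widetilde{Y}'$, which are automatically $u$-equivariant, to obtain $\Z[\pi]$-chain maps $v_*\colon C_*(\widetilde{B}\pi) \to \Res_u C_*(\widetilde{B}\pi')$ and $h_*\colon C_*(\widetilde{Y}) \to \Res_u C_*(\widetilde{Y}')$.

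The hypothesis $u\circ\nu_* = \nu'_*\circ h_*$ combined with asphericity of $B\pi'$ shows that $v\circ\nu \simeq \nu'\circ h \colon Y \to B\pi'$, and lifting this homotopy $u$-equivariantly to universal covers makes the square
$$\begin{tikzcd}
C_*(\widetilde{Y}) \ar[r, "\nu_*"] \ar[d, "h_*"] & C_*(\widetilde{B}\pi) \ar[d, "v_*"] \\
\Res_u C_*(\widetilde{Y}') \ar[r, "\nu'_*"] & \Res_u C_*(\widetilde{B}\pi')
\end{tikzcd}$$
chain-homotopy commutative as a diagram of $\Z[\pi]$-chain complexes. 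Since $\widetilde{B}\pi'$ is contractible, $\Res_u C_*(\widetilde{B}\pi')$ is acyclic in positive degrees, so \cref{lem:Cofibre} produces a unique-up-to-homotopy chain map $\operatorname{Cone}(\nu_*) \to \Res_u \operatorname{Cone}(\nu'_*)$ compatible with $v_*$ and $h_*$. Transferring along the canonical isomorphisms $C_*(\widetilde{M}(\nu),\widetilde{Y}) \cong \operatorname{Cone}(\nu_*)$ and $C_*(\widetilde{M}(\nu'),\widetilde{Y}') \cong \operatorname{Cone}(\nu'_*)$ then defines $(u,h)^v$ and yields commutativity of the displayed diagram. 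The implication $v\simeq v' \Rightarrow (u,h)^v \simeq (u,h)^{v'}$ follows directly from the uniqueness clause of \cref{lem:Cofibre}, since chain-homotopic $v_*, v'_*$ produce chain-homotopic cone maps.

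For the final statement, I would dualize $(u,h)^v$ with coefficients in $\Res_u A$, using that restriction of scalars along the isomorphism $u$ is an equivalence of $\Z[\pi]$- and $\Z[\pi']$-module categories, so that $\Hom_{\Z[\pi]}(\Res_u(-), \Res_u A) \cong \Hom_{\Z[\pi']}(-, A)$ canonically. This produces a cochain map from $\Hom_{\Z[\pi']}(C_*(\widetilde{M}(\nu'),\widetilde{Y}'), A)$ to $\Hom_{\Z[\pi]}(C_*(\widetilde{M}(\nu),\widetilde{Y}), \Res_u A)$, and passing to cohomology yields $(u,h)^*$. The main point requiring care is consistent bookkeeping of the $\Res_u$ action throughout; once the identification of the relative chain complex with the cone is in place, the entire argument is essentially formal from \cref{lem:Cofibre}.
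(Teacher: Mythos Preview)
Your proposal is correct and follows essentially the same route as the paper: identify $C_*(\widetilde{M}(\nu),\widetilde{Y})$ with $\operatorname{Cone}(\widetilde{\nu}_*)$ (using that $\nu,\nu'$ are cellular), then apply \cref{lem:Cofibre} to the homotopy-commutative square with $h_*$ and $v_*$, and finally handle the $\Res_u$ bookkeeping (the paper cites \cref{lem:IgnoreRes} for this last step). You have simply spelled out in more detail what the paper compresses into three sentences.
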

\begin{proof}
Since~$\nu$ and~$\nu'$ are cellular,  we have~$C_*(\widetilde{M}(\nu),\widetilde{Y})=\operatorname{Cone}(\widetilde{\nu}_*)_*$ and~$C_*(\widetilde{M}(\nu'),\widetilde{Y})=\operatorname{Cone}(\widetilde{\nu}'_*)_*$; see e.g.~\cite[page 20]{Weibel}.
Since any two choices $v,v' \colon B\pi \to B\pi'$ realising $u$ are homotopic, the result follows directly from Lemma~\ref{lem:Cofibre}.
In this last step, we used Lemma~\ref{lem:IgnoreRes} to identify $H^*(\Res_u(C(M(\nu),Y));\Res_uA)$ with~$H^*(M(\nu),Y;A).$
\end{proof}

\begin{lemma}
		\label{lem:Identifying-targets}
Let $\pi$ be a group, let $Y$ be a CW complex and let~$C^{\pi}_*$ be a free $\Z[\pi]$-resolution of~$\Z$.
Given a cellular map~$\nu \colon Y\to B\pi$,  a chain map~$t \colon C_*(\widetilde{Y}) \to C^{\pi}_*$ that preserves augmentations,  and a chain homotopy equivalence $\varphi \colon C_*(\widetilde{B}\pi) \to C_*^\pi$ with $\varphi \circ \nu_* \simeq t_Y$,
there is up to homotopy a unique map 
$$ \phi_{t,\nu}^\varphi \colon C_*(\widetilde{M}(\nu),\widetilde{Y}) \to \operatorname{Cone}(t_Y)_*$$
 that makes the following diagram commute:
\[
\xymatrix{
C_*(\widetilde{B}\pi) \ar[d]^\varphi\ar[r] & C_*(\widetilde{M}(\nu),\widetilde{Y})  \ar@{-->}[d]^-{\phi_{t,\nu}^{\varphi}}\ar[r]& C_{*-1}(\widetilde{Y})  \ar[d]^=\\
C_*^\pi \ar[r]& \operatorname{Cone}(t)_*  \ar[r]& C_{*-1}(\widetilde{Y}) .
}
\]
A homotopy equivalence $\varphi$ as above exists, is unique up to homotopy, and if~$\varphi \simeq \varphi'$ are homotopic, then $\phi_{t,\nu}^{\varphi} \simeq \phi_{t,\nu}^{\varphi'}$.
In particular, for every~$\Z[\pi]$-module $A$,  there is an isomorphism 
	$$\phi_{t,\nu}^*\colon H^3(\operatorname{Cone}(t);A)\xrightarrow{\cong} H^3(M(\nu),Y;A).$$
	Furthermore,  this map satisfies the following properties:
	\begin{enumerate}
	\item for any chain homotopy $t\simeq t'\colon C_*(\widetilde{Y}) \to C^{\pi}_*$, we have 
	$$\phi_{t,\nu}^*\circ (\id_{C^\pi},\id_{C(\widetilde{Y})})^*=\phi_{t',\nu}^*$$ where $(\id_{C^\pi},\id_{C(\widetilde{Y})})^*$ is the isomorphism from \cref{lem:Indepc};
	\item for every cellular map $\nu'\colon Y\to B\pi$ that is homotopic to $\nu$,  we have 
	$$\phi_{t,\nu}^*= (\id_\pi,\id_Y)^*\circ\phi_{t,\nu'}^*$$ 	
	where $(\id_\pi,\id_Y)^*$ is the isomorphism from \cref{lem:InducedMapkInvariants}.
		\end{enumerate}
\end{lemma}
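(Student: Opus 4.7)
The plan is to prove everything via repeated applications of Lemma~\ref{lem:Cofibre}, together with the standard uniqueness of chain maps between free $\Z[\pi]$-resolutions of $\Z$.

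\textbf{Existence and uniqueness of $\phi_{t,\nu}^{\varphi}$ (given $\varphi$).} Since $\nu$ is cellular, $C_*(\widetilde{M}(\nu),\widetilde{Y})=\operatorname{Cone}(\nu_*)_*$ and the diagram displayed in the statement is exactly the one from Lemma~\ref{lem:Cofibre} applied to the homotopy-commutative square with $i=\nu_*$, $i'=t$, $v=\varphi$, $h=\id_{C(\widetilde{Y})}$. The acyclicity hypothesis of Lemma~\ref{lem:Cofibre} is satisfied because $C^\pi_*$ is a free $\Z[\pi]$-resolution of~$\Z$, hence acyclic in degrees $>0$. This yields a chain map $\phi_{t,\nu}^{\varphi}:=(\id,\varphi)$, unique up to homotopy, and the implication $\varphi\simeq\varphi'\Rightarrow \phi_{t,\nu}^{\varphi}\simeq \phi_{t,\nu}^{\varphi'}$ is part~(1) of that same lemma.

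\textbf{Existence and uniqueness of $\varphi$, and the cohomological isomorphism.} Since $\widetilde{B}\pi$ is contractible, $C_*(\widetilde{B}\pi)$ is itself a free $\Z[\pi]$-resolution of $\Z$. By~\cite[Prop.~5.1 and Cor.~5.2]{EilenbergMacLane-operators}, there is an augmentation-preserving chain map $\varphi\colon C_*(\widetilde{B}\pi)\to C^\pi_*$, unique up to homotopy, and every such map is automatically a chain homotopy equivalence. Since $\nu\colon Y\to B\pi$ is pointed and cellular, $\nu_*$ preserves augmentations, so $\varphi\circ\nu_*$ and $t$ are both augmentation-preserving chain maps $C_*(\widetilde{Y})\to C^\pi_*$; by the same uniqueness result, $\varphi\circ\nu_*\simeq t$, which is the required compatibility. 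Part~(2) of Lemma~\ref{lem:Cofibre} then shows that $\phi_{t,\nu}^{\varphi}$ is a chain homotopy equivalence (both $\varphi$ and $\id$ are), so it induces an isomorphism $\phi_{t,\nu}^*\colon H^3(\operatorname{Cone}(t);A)\xrightarrow{\cong} H^3(M(\nu),Y;A)$ which is independent of the choice of $\varphi$ by the previous paragraph.

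\textbf{Properties (1) and (2).} Both are uniqueness arguments: one shows that the two chain maps in question make the \emph{same} characterising diagram commute, and then invokes uniqueness up to homotopy (either from Lemma~\ref{lem:Cofibre} or from Lemma~\ref{lem:Indepc}). For (1), assume $t\simeq t'$; the same $\varphi$ witnesses the compatibility $\varphi\circ\nu_*\simeq t\simeq t'$, and the composition
\[\operatorname{Cone}(\nu_*)_*\xrightarrow{\phi_{t,\nu}^{\varphi}}\operatorname{Cone}(t)_*\xrightarrow{(\id_{C^\pi},\id_{C(\widetilde{Y})})}\operatorname{Cone}(t')_*\]
is $\varphi$ on the $C_*(\widetilde{B}\pi)$-summand and $\id$ on the $C_*(\widetilde{Y})$-summand, so it fits the defining diagram of $\phi_{t',\nu}^{\varphi}$; uniqueness then gives the desired homotopy, which descends to the claimed equality on cohomology. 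For (2), assume $\nu\simeq\nu'$; applying Lemma~\ref{lem:InducedMapkInvariants} with $u=\id_\pi$, $h=\id_Y$, and $v=\id_{B\pi}$ produces a map $(\id_\pi,\id_Y)^{\id}\colon C_*(\widetilde{M}(\nu),\widetilde{Y})\to C_*(\widetilde{M}(\nu'),\widetilde{Y})$ that is the identity on both the $C_*(\widetilde{B}\pi)$- and the $C_*(\widetilde{Y})$-pieces; composing with $\phi_{t,\nu'}^{\varphi}$ yields a chain map fitting the defining diagram of $\phi_{t,\nu}^{\varphi}$, and uniqueness again gives the claim on cohomology.

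\textbf{Main obstacle.} There is no deep difficulty; the only real subtlety is bookkeeping. One must verify carefully that the natural maps provided by Lemmas~\ref{lem:Cofibre}, \ref{lem:Indepc}, and~\ref{lem:InducedMapkInvariants} restrict correctly to $\varphi$ and $\id$ on the two pieces of the cones, so that the uniqueness clauses can be legitimately applied to identify the various compositions. Once this is checked, every assertion of the lemma reduces to a single invocation of one of these three uniqueness results.
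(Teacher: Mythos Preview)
Your proposal is correct and follows essentially the same approach as the paper. Both proofs identify $C_*(\widetilde{M}(\nu),\widetilde{Y})$ with $\operatorname{Cone}(\nu_*)_*$ via cellularity, obtain $\varphi$ and its uniqueness from the standard Eilenberg--MacLane result on augmentation-preserving maps into acyclic complexes, and then deduce the existence, uniqueness, and homotopy-equivalence properties of $\phi_{t,\nu}^\varphi$ from Lemma~\ref{lem:Cofibre} (the paper routes this through Lemma~\ref{lem:Indepc}, but that lemma's proof is itself just an invocation of Lemma~\ref{lem:Cofibre}). The only organisational difference is that the paper proves properties~(1) and~(2) simultaneously via a single $2\times 2$ homotopy-commutative square of cones, specialising to $t=t'$ or $\nu=\nu'$ at the end, whereas you handle the two cases separately; the underlying uniqueness argument is identical.
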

\begin{proof}
For the existence of $\varphi$, note that since $C_*^\pi$ is acyclic,~\cite[Theorem 5.1]{EilenbergMacLane-operators} ensures that any augmentation preserving chain map~$\varphi \colon C_*(\widetilde{B}\pi) \to C_*^\pi$ satisfies $\varphi \circ \nu_* \simeq t_Y$.
The uniqueness of~$\varphi$ up to homotopy equivalence follows from Lemma~\ref{lem:EilenbergMaclane51}.
Since~$\nu$ is cellular,  we use the identification~$C_*(\widetilde{M}(\nu),\widetilde{Y})=\operatorname{Cone}(\widetilde{\nu}_*)_*$.
The existence and uniqueness of~$\phi_{t,\nu}^\varphi$ follow from Lemma~\ref{lem:Indepc} applied to $K_*=C_*(\widetilde{B}\pi)=K'_*,L_*=C_*(\widetilde{Y})=L'_*$ and $h=\id_{C(\widetilde{Y})}$.
In order to prove the last two assertions, pick $\varphi,\varphi' \colon C_*(\widetilde{B}\pi) \to C_*^\pi$ as above.
Since these maps are necessarily homotopic, we obtain
the homotopy commutative diagram
\[ 
\xymatrix{
C_*(\widetilde{B}\pi) \ar[r]^=\ar[d]^{\varphi}&C_*(\widetilde{B}\pi) \ar[d]^{\varphi'}\\
C_*^\pi \ar[r]^{=}&C_*^\pi.
}
\]
The uniqueness portion of Lemma~\ref{lem:Cofibre} now yields the homotopy commutative diagram 
\[ 
\xymatrix{
C_*(\widetilde{M}(\nu),\widetilde{Y}) \ar[r]^{(\id_\pi,\id_Y)}\ar[d]^{\phi_{t,\nu}^\varphi}& C_*(\widetilde{M}(\nu'),\widetilde{Y})  \ar[d]^{\phi_{t',\nu'}^{\varphi'}}\\
\operatorname{Cone}_*(t|_Y) \ar[r]^{(\id_{C^\pi},\id_{C(\widetilde{Y})})} &\operatorname{Cone}(t'|_Y)_*.
}
\]
Here the notation for the horizontal maps is the one from Lemmas~\ref{lem:Indepc} and~\ref{lem:InducedMapkInvariants}.
When $t=t'$, the bottom map can be taken to be the identity, whereas when $\nu=\nu'$, the top map can be taken to be the identity.
The last two assertions now follow by passing to cohomology.
\end{proof}

\subsection{The $k$-invariant of a CW pair}
	\label{sub:kCWDef}

We define the $k$-invariant of a CW pair and establish some of its properties.

\begin{notation}
Given a CW pair~$(X,Y)$,  a cellular map~$\nu \colon X \to B\pi_1(X)$ 
and an augmentation-preserving chain map~$t \colon C_*(\widetilde{X}) \to C^{\pi_1(X)}_*$,  we write
$$ k_{X,Y}^\nu:=\phi_{t,\nu}^*(k_{C(\widetilde{X}),C(\widetilde{Y})}^t) \in H^3(M(\nu|_Y),Y;\pi_2(X)).$$
Here and in what follows,  for brevity, we write $\phi_{t,\nu}^*:=\phi_{t|_{C(\widetilde{Y})},\nu|_Y}^*$ for the map from Lemma~\ref{lem:Identifying-targets}.
\end{notation}

Lemmas~\ref{lem:Identifying-targets} and~\ref{lem:Indepc}
imply that $k_{X,Y}^\nu$ does not depend on the choice of the chain map~$t$.
Note also that for any two cellular maps~$\nu,\nu' \colon X \to B\pi_1(X)$,
 Lemma~\ref{lem:Identifying-targets} shows that the~$k$-invariants~$k_{X,Y}^\nu$ and~$k_{X,Y}^{\nu'}$ are related by a canonical isomorphism.

\begin{definition}
\label{def:RelativekinvariantCW}
The \emph{relative $k$-invariant} of a CW pair~$(X,Y)$ refers to 
$$k_{X,Y}^\nu \in  H^3(M(\nu|_Y),Y;\pi_2(X))$$
 for any choice a cellular~$\nu \colon X \to B\pi_1(X)$.
\end{definition}

In our conventions, we fixed for each space $X$ a model $B\pi_1(X)$ such that $X \to B\pi_1(X)$ is the identity on $\pi_1$.
We nevertheless note that the proof of Lemma~\ref{lem:InducedMapkInvariants} shows that if one uses another such model, say $(B\pi_1(X))'$, the resulting $k$-invariants are related by a canonical isomorphism.

\begin{remark}
The reader might have noted that the choice of a map~$\nu \colon X \to B\pi_1(X)$ induces an augmentation preserving chain map~$t=\nu_* \colon C_*(\widetilde{X}) \to C_*(\widetilde{B}\pi_1(X))$.
The reason for which we do not simply take~$C_*^\pi=C_*(\widetilde{B}\pi_1(X))$ and~$t=\nu_*$ will become apparent during the proof of the next proposition when we compare the relative~$k$-invariants of two different CW pairs.
\end{remark}

In what follows, $i \colon Y \to X$ denotes the inclusion of a subcomplex.
The next result constitutes one of the main technical steps needed to prove Theorem~\ref{thm:RealiseAlgebraic3Type-intro}.

\begin{theorem}
\label{thm:GoalRelativekCW}
Let~$(X,Y),(X',Y')$ be CW pairs,  let $\nu \colon X \to B\pi_1(X)$ and~$\nu' \colon X \to B\pi_1(X')$ be cellular maps,
let $u\colon \pi_1(X)\to \pi_1(X')$ be an isomorphism, let~$h \colon Y \to Y'$ be a map that satisfies~$u\circ i_* = i_*' \circ h_*$, and let~$F \colon \pi_2(X) \to \pi_2(X')$ be a $u$-equivariant homomorphism.
The following assertions are equivalent:
\begin{itemize}
\item There is a map~$g \colon P_2(X) \to P_2(X')$ that induces $u$ and $F$ on $\pi_1$ and~$\pi_2$, respectively, and such that the following are homotopic:
 \[Y\hookrightarrow X\to P_2(X)\xrightarrow{g}P_2(X')\qquad\text{and}\qquad Y\xrightarrow{h}Y'\hookrightarrow X'\to P_2(X').\] 
\item The relative $k$-invariants satisfy
$$(u,h)^*(k_{X',Y'}^{\nu'})=F_*(k_{X,Y}^\nu) \in H^3(M(\nu|_Y),Y;\Res_u\pi_2(X')),$$
for any $\nu \colon X \to B\pi_1(X)$ and $\nu' \colon X' \to B\pi_1(X')$, and where $(u,h)^*$ is the map from \cref{lem:InducedMapkInvariants}.
\end{itemize}
\end{theorem}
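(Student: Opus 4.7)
The strategy is to reduce the statement to the chain-level criterion \cref{prop:kConjChain} by passing to cellular chain complexes of universal covers, and then transport the resulting chain-level equality to the cohomology groups housing the topological $k$-invariants via the identifications of \cref{lem:Identifying-targets,lem:InducedMapkInvariants,lem:IgnoreRes}. After replacing $h$, $\nu$, and $\nu'$ by homotopic cellular maps, I set $K_*:=C_*(\wt X)$, $L_*:=C_*(\wt Y)$, $K_*':=\Res_u C_*(\wt{X'})$, and $L_*':=\Res_u C_*(\wt{Y'})$; these are augmented $\Z[\pi_1(X)]$-chain complexes acyclic in dimensions $<2$ (the universal covers being simply connected), and $h$ induces an augmentation-preserving chain map $h_*\colon L_*\to L_*'$. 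Choosing a cellular map $v\colon B\pi_1(X)\to B\pi_1(X')$ realising $u$, a free $\Z[\pi_1(X)]$-resolution $C^\pi_*$ of $\Z$, and augmentation-preserving chain maps $t\colon K_*\to C^\pi_*$ and $t'\colon K_*'\to C^\pi_*$ with $t'\circ h_*\simeq t_L$ (possible by uniqueness of augmentation-preserving maps into an acyclic complex, using $v_*$ as an intermediary), places us in the setting of \cref{prop:kConjChain}.

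The central topological step is to prove that a map $g\colon P_2(X)\to P_2(X')$ with the properties in the first bullet exists if and only if there is an augmentation-preserving chain map $f\colon K_*|_{[0,3]}\to K_*'|_{[0,3]}$ with $f\circ (i_L)|_{[0,2]}\simeq (i_{L'})|_{[0,2]}\circ h_*|_{[0,2]}$ and $f_*=F$ on $H_2$. For the forward direction, I would choose a model of $P_2(X)$ obtained from $X$ by attaching cells of dimension $\geq 4$, so that $P_2(X)^{(3)}=X^{(3)}$; cellular approximation of $g$ then produces the required $f$ after passing to $[0,3]$-truncations of cellular chain complexes of universal covers. Conversely, given $f$, I would realise it inductively as a cellular map $X^{(3)}\to (X')^{(3)}$: the values on $0$-, $1$-, and $2$-cells are determined by $f$ together with the Hurewicz identification $\pi_2(X')\cong H_2(K')$, the primary obstruction over $3$-cells vanishes because $f$ is a chain map, and the homotopy extension property along $Y^{(3)}\hookrightarrow X^{(3)}$ upgrades the chain-level compatibility of $f|_L$ and $h_*$ to an honest restriction equality. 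Composing with $X'\hookrightarrow P_2(X')$ and extending over the cells of dimension $\geq 4$ in $X$ and $P_2(X)$ is unobstructed because $\pi_n(P_2(X'))=0$ for $n\geq 3$, which also allows one to factor the map through $P_2(X)$.

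Combining this equivalence with \cref{prop:kConjChain} shows that the existence of $g$ is equivalent to the chain-level identity $(h_*)^*(k_{K',L'}^{t'})=F_*(k_{K,L}^{t})$, where the map on the left is the one from \cref{lem:Indepc}. It remains to translate this into the equality $(u,h)^*(k_{X',Y'}^{\nu'})=F_*(k_{X,Y}^\nu)$ in the statement. For this, I would verify that the square whose horizontal arrows are the isomorphisms $\phi_{t,\nu}^*$ and $\phi_{t',\nu'}^*$ of \cref{lem:Identifying-targets} and whose vertical arrows are $(h_*)^*$ on the left and $(u,h)^*$ on the right commutes. This is a uniqueness statement for maps between mapping cones: both composites realise the natural map induced by the homotopy commutative square at the level of classifying spaces associated to $v$, so \cref{lem:Cofibre} forces them to agree up to homotopy. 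The accompanying $\Res_u$ bookkeeping is handled by \cref{lem:IgnoreRes}.

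The hard part will be the backward direction of the topological-to-chain translation: promoting an abstract chain map $f$ on $[0,3]$-truncations to an actual map $g\colon P_2(X)\to P_2(X')$ while maintaining the required homotopy compatibility with $h$ on $Y$. This is the topological counterpart of the realisation argument carried out at the chain level in \cref{prop:kConjChain}, and its execution requires some care with obstruction theory relative to the subcomplex $Y$. Everything else is a naturality diagram chase through the lemmas of \cref{sub:PrepKInvariant,sub:SetUpRelativeKCW}.
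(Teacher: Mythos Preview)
Your proposal is correct and follows essentially the same route as the paper: reduce to \cref{prop:kConjChain} via cellular chains of universal covers, and transport the chain-level identity through the isomorphisms of \cref{lem:Identifying-targets,lem:InducedMapkInvariants,lem:IgnoreRes} using the uniqueness in \cref{lem:Cofibre}. For the step you flag as hard---promoting the chain map $f$ on $[0,3]$-truncations to a map $g$---the paper invokes Whitehead's realisation theorems \cite[Theorems~14 and~16]{WhiteheadCombinatorial2} rather than building the map by hand, and only obtains that the two compositions $Y\to P_2(X')$ are homotopic (first on $Y^{(2)}$ via Theorem~14, then on all of $Y$ since $\pi_k(P_2(X'))=0$ for $k\geq 3$), so your appeal to the HEP for an honest restriction equality is not needed.
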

\begin{proof}
We begin with some set-up.
Set~$\pi:=\pi_1(X)$ and~$\pi':=\pi_1(X')$ for brevity.
The assumption~$u\circ i_* = i_*' \circ h_*$ makes it possible to choose a homotopy equivalence~$v \colon B\pi \to B\pi'$ that induces~$u$ on~$\pi_1$, satisfies~$v\circ \nu|_Y \simeq \nu'|_Y \circ h$, and induces an augmentation-preserving $\Z[\pi]$-chain homotopy equivalence on the chain level.
Next, choose an augmentation preserving homotopy equivalence~$\varphi' \colon \Res_u C_*(\widetilde{B}\pi') \simeq C_*^\pi$,  leading to an augmentation preserving homotopy equivalence~$\varphi:=\varphi' \circ v_* \colon C_*(\widetilde{B}\pi) \simeq C_*^\pi.$
This set-up can be summarised by the following diagram of~$\Z[\pi]$-chain complexes in which the triangle commutes and the rectangle homotopy commutes:
\[
\xymatrix{
C_*(\widetilde{Y})\ar[rr]^-{h_*}\ar[d]&&\Res_u C_*(\widetilde{Y}')\ar[d]  \\
C_*(\widetilde{X})\ar[d]&&\Res_u C_*(\widetilde{X}') \ar[d] \\
C_*(\widetilde{B}\pi)\ar[rr]^-{v_*,\simeq}\ar[rd]_{\varphi,\simeq}&&\Res_u C_*(\widetilde{B}\pi')\ar[ld]^{\varphi',\simeq} \\
&C_*^\pi.&
 }
\]
We obtain $\Z[\pi]$-chain maps~$t \colon C_*(\widetilde{X}) \to C_*^\pi$ and~$t_u' \colon \Res_u C_*(\widetilde{X}') \to C_*^\pi$ with~$t_Y \simeq (t_u')_{Y'} \circ~h_*$, where we write $t_Y:=t|_{C_*(\widetilde{Y})}$ for brevity, and similarly for $(t_u')_{Y'}$.
Proposition~\ref{prop:kConjChain} ensures that~$h_* \colon C_*(\widetilde{Y})|_{[0,2]} \to \Res_u C_*(\widetilde{Y}')|_{[0,2]}$ extends to a chain map~$C_*(\widetilde{X})|_{[0,2]} \to \Res_u C_*(\widetilde{X}')|_{[0,2]}$ that induces $F$ if and only if~$F_*(k_{C(\widetilde{X}),C(\widetilde{Y})}^t)=(\id_{C^\pi},h)^*(k_{\Res_u C(\widetilde{X}'),\Res_u C(\widetilde{Y}')}^{t_u'})$.
\begin{claim}
\label{claim:key}
The following equivalence holds:
$$F_*(k_{C(\widetilde{X}),C(\widetilde{Y})}^t)=(\id_{C^\pi},h)^*(k_{\Res_u C(\widetilde{X}'),\Res_u C(\widetilde{Y}')}^{t_u'}) \quad \text{if and only if} \quad F_*(k_{X,Y}^\nu)=(u,h)^*(k_{X',Y'}^{\nu'}).$$
\end{claim}
\begin{proof}
Consider the following commutative diagram:
\[
\xymatrix{
C_*^\pi\ar[r]^=
&C_*^\pi \ar[r]^=
&C_*^\pi \\
%%%%%%
%%%%%%
C_*(\widetilde{B}\pi) \ar[r]^-{=}\ar[u]^{\varphi}
&C_*(\widetilde{B}\pi) \ar[r]^-{v_*}\ar[u]^{\varphi}
&\Res_u C_*(\widetilde{B}\pi). \ar[u]^{\varphi'}
}
\]
Applying Lemma~\ref{lem:Cofibre}, and recalling that $C_*(\widetilde{M}(\nu|_Y),\widetilde{Y})=\operatorname{Cone}((\widetilde{\nu}|_Y)_*)_*,C_*(\widetilde{M}(\nu'|_{Y'}),\widetilde{Y}')=\operatorname{Cone}((\widetilde{\nu}'|_{Y'})_*)_*$ as well as the notation from Lemmas~\ref{lem:Indepc},~\ref{lem:InducedMapkInvariants} and~\ref{lem:Identifying-targets} leads to the following homotopy commutative diagram:
\[
\xymatrix{
\operatorname{Cone}(t_Y)_*\ar[r]^=
&\operatorname{Cone}(t_Y)_* \ar[r]^-{(\id,h_*)}
&\operatorname{Cone}((t_u')_{Y'})_* \\
%%%%%%
%%%%%%
C_*(\widetilde{M}(\nu|_Y),\widetilde{Y}) \ar[r]^-{=}\ar[u]^-{\phi_{t,\nu}^\varphi}
&C_*(\widetilde{M}(\nu|_Y),\widetilde{Y})  \ar[r]^-{(v_*,h_*)} \ar[u]^-{\phi_{t,\nu}^\varphi}
&\Res_u C_*(\widetilde{M}(\nu'|_{Y'}),\widetilde{Y}').  \ar[u]^-{\phi_{t_u',\nu'}^{\varphi'}}
}
\]
Passing to cohomology now leads to the following commutative diagram:
\[
\xymatrix{
H^3(\operatorname{Cone}(t_Y);H_2(\widetilde{X}))\ar[r]^-{F_*}\ar[d]_\cong^-{\phi_{t,\nu}^*}
&H^3(\operatorname{Cone}(t_{Y});\Res_uH_2(\widetilde{X}'))\ar[d]_\cong^-{\phi_{t,\nu}^*}
&H^3(\operatorname{Cone}((t_u')_{Y'});\Res_uH_2(\widetilde{X}')) \ar[l]_-{(\id,h)^*}\ar[d]_\cong^-{\phi_{t,\nu}^*}\\
%%%%%%
%%%%%%
H^3(M(\nu|_Y),Y;H_2(\widetilde{X}))\ar[r]^-{F_*}
&H^3(M(\nu|_Y),Y;\Res_uH_2(\widetilde{X}'))
&H^3(M(\nu'|_{Y'}),Y';H_2(\widetilde{X}')).\ar[l]_-{(u,h)^*}
}
\]
Here, in the bottom right,  we used Lemma~\ref{lem:IgnoreRes} to make the $k$-invariant preserving identification
\begin{align*}
H^3(M(\nu'|_{Y'}),Y';H_2(\widetilde{X}'))
&=H^3(\operatorname{Cone}((\widetilde{\nu}'|_{Y'})_*);H_2(\widetilde{X}'))
=H^3(\Res_u \operatorname{Cone}((\widetilde{\nu}'|_{Y'})_*); \Res_u H_2(\widetilde{X}')) \\
&=H^3(\Res_u C(\widetilde{M}(\nu'|_{Y'}),Y'); \Res_u H_2(\widetilde{X}')).
\end{align*}
The claim now follows from the commutativity of this diagram
\end{proof}
At this point, we are ready to prove the equivalence asserted by the theorem.
To do so, we note that the existence of a map~$g \colon P_2(X) \to P_2(X')$ as in the theorem statement is independent of the choice of the models for~$P_2(X)$ and~$P_2(X')$.
We therefore assume without loss of generality that~$P_2(X)$ and~$P_2(X')$ are respectively obtained from~$X$ and~$X'$ by attaching cells of dimension~$\geq 4$.

We now prove that the existence of the map~$g$ implies the condition on the relative~$k$-invariants.
If~$g\colon P_2(X) \to P_2(X')$ induces~$u$ and~$F$ on~$\pi_1$ and~$\pi_2$, respectively, and is such that the compositions~$Y\hookrightarrow X\to P_2(X)\xrightarrow{g} P_2(X')$ and~$Y\xrightarrow{h}Y'\hookrightarrow X'\to P_2(X')$ are homotopic, then (thanks to our choice of models for $P_2(X)$ and $P_2(X')$), the chain map $g_* \colon C_*(P_2(X))|_{[0,3]} \to \Res_uC_*(P_2(X'))|_{[0,3]}$ induces a chain map~$f:=g_* \colon C_*(\widetilde{X})|_{[0,3]} \to \Res_uC_*(\widetilde{X}')|_{[0,3]}$ that satisfies the conditions of Proposition~\ref{prop:kConjChain},  whence by the claim
$$(u,h)^*(k_{X',Y'}^{\nu'})=F_*(k_{X,Y}^\nu).$$
For the converse, we assume that~$(u,h)^*(k_{X',Y'}^{\nu'})=F_*(k_{X,Y}^\nu)$ and construct the required map~$g$.
Thanks to the claim and Proposition~\ref{prop:kConjChain}, the condition on the $k$-invariants produces a chain map~$f$, with the aforementioned properties.
Apply~\cite[Theorem 16]{WhiteheadCombinatorial2} to realise this~$f$ by a map~$g' \colon X^{(3)} \to (X')^{(3)}$ that induces~$u$
and~$F$ and whose restriction to~$Y^{(2)}$ satisfies~$g'_* = h_*$ on~$C_*(\widetilde{Y})|_{[0,2]}.$
Postcompose~$g'$ with the map~$(X')^{(3)} \to P_2(X')$ to get a map~$X^{(3)} \to P_2(X')$.
Since~$\pi_k(P_2(X'))=0$ for~$k \geq 3$,  one can extend~$g'$ further to 
a map~$g\colon P_2(X) \to P_2(X')$.
Now
$$Y\hookrightarrow X\to P_2(X)\xrightarrow{g} P_2(X')
 \quad \text{and} \quad Y \xrightarrow{h} Y'\hookrightarrow X' \to P_2(X')$$
  induce chain homotopic maps on~$C_*(\widetilde{Y})|_{[0,2]}.$
By~\cite[Theorem 14]{WhiteheadCombinatorial2} this implies that the restrictions of the compositions to~$Y^{(2)}$ are homotopic.
But now, since~$\pi_k(P_2(X'))=0$ for~$k\geq 3$ this means that they are homotopic as maps~$Y \to P_2(X')$. Thus~$g$ is a map as required.
\end{proof}

\section{Relative~$k$-invariants of spaces}
\label{sec:kInvariantNotCW}
This section defines relative~$k$-invariants for pairs of spaces that are homotopy equivalent to a~CW pair and then proves Theorem~\ref{thm:RealiseAlgebraic3Type-intro} from the introduction.

\medbreak

We begin by generalising the definition of the map $(u,h)$ from Lemma~\ref{lem:InducedMapkInvariants} to the case where the target of $h$ is not a CW complex.

\begin{lemma}
\label{lem:InducedMapkInvariantsNotCW}
Let~$Y$ be a CW complex, let~$Y'$ be a space, and let~$\nu \colon Y\to B\pi$ and~$\nu'\colon Y'\to B\pi'$ be maps with $\nu$ cellular.
Then, up to homotopy rel.~$Y$,  a group isomorphism $u \colon \pi \to \pi'$ and a 
map~$h \colon Y \to Y'$ satisfying~$u\circ \nu_*=\nu'_*\circ h_*$ give rise to a unique map
$$ (u,h) \colon M(\nu) \to M(\nu')$$	
that extends $h$ and induces $u$ on fundamental groups.
	In particular, for any $\Z[\pi']$-module $A$,  the pair $(u,h)$ induces a group homomorphism 
	$$(u,h)^*\colon H^3(M(\nu'),Y';A)\to H^3(M(\nu),Y;\Res_u A).$$
	When $Y,Y'$ are CW complexes, and $\nu,\nu'$ are cellular,  the chain map induced by~$(u,h)$ is homotopic to the one described in Lemma~\ref{lem:InducedMapkInvariants}.
\end{lemma}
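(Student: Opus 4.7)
The plan is to exploit the asphericity of $B\pi'$ to reduce the construction to a choice of classifying-space map together with a mapping cylinder interpolation. Since $B\pi'$ is a $K(\pi',1)$ and $B\pi$ is a CW complex, there exists a map $v \colon B\pi \to B\pi'$ realising $u$ on fundamental groups, unique up to homotopy. The hypothesis $u\circ\nu_* = \nu'_*\circ h_*$ says that the maps $v\circ\nu$ and $\nu'\circ h$ from $Y$ to $B\pi'$ induce the same homomorphism on $\pi_1$; asphericity of $B\pi'$ then supplies a homotopy $H\colon Y \times I \to B\pi'$ with $H_0 = \nu'\circ h$ and $H_1 = v\circ\nu$.

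For existence, I use the model $M(\nu) = (Y\times[0,1]\sqcup B\pi)/((y,1)\sim\nu(y))$ (and similarly for $M(\nu')$) and define $(u,h)$ by sending $B\pi\subset M(\nu)$ into $B\pi'\subset M(\nu')$ via $v$, sending $(y,t)\in Y\times[0,1]$ to $(h(y),2t)\in Y'\times[0,1]$ for $t\in[0,\tfrac12]$, and to $H(y,2t-1)\in B\pi'$ for $t\in[\tfrac12,1]$. The two formulas agree at $t=\tfrac12$ via the cylinder relation $(h(y),1)\sim \nu'(h(y)) = H(y,0)$, and at $t=1$ we have $H(y,1) = v(\nu(y))$, which matches the image of $\nu(y)\in B\pi$. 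By construction the map extends $h$, and its composition with the retraction $M(\nu')\to B\pi'$ restricts on $B\pi\subset M(\nu)$ to $v$, which shows that $(u,h)$ induces $u$ on $\pi_1$.

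For uniqueness rel $Y$, I use that cellularity of $\nu$ together with the CW structures on $Y$ and $B\pi$ makes $M(\nu)$ a CW complex in which $Y$ is a subcomplex. Given two maps $\phi_0,\phi_1\colon (M(\nu),Y)\to(M(\nu'),Y')$ which agree on $Y$ and induce $u$, I build a homotopy rel $Y$ cell by cell via relative obstruction theory: the successive obstructions lie in $H^n(M(\nu),Y;\pi_n(M(\nu')))$, and these vanish for $n\geq 2$ because $M(\nu')\simeq B\pi'$ is aspherical, while the $\pi_1$-level obstruction vanishes by hypothesis on the induced maps. This uniqueness rel $Y$ immediately yields a well-defined map on relative cohomology $(u,h)^*$. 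For the final assertion, when $Y,Y'$ are CW and $\nu,\nu'$ are cellular one may take $(u,h)$ to be cellular (by cellular approximation rel $Y$), so it induces a chain map fitting into the diagram of \cref{lem:InducedMapkInvariants}; the uniqueness clause of that lemma then identifies the induced chain map with $(u,h)^v$ up to chain homotopy.

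The principal difficulty is the uniqueness step, since the codomain $M(\nu')$ need not be a CW complex. The obstruction-theoretic argument nevertheless goes through because it only requires a CW structure on the source pair $(M(\nu),Y)$ (supplied by the cellularity of $\nu$) and knowledge of the homotopy groups $\pi_n(M(\nu'))$, which are all controlled by the homotopy equivalence $M(\nu')\simeq B\pi'$. Every other step in the plan reduces to a routine verification on the mapping cylinder or to the previously-established CW case.
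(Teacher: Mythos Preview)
Your proof is correct and follows essentially the same path as the paper for existence: both realise $u$ by a map $v\colon B\pi\to B\pi'$, use asphericity of $B\pi'$ to produce a homotopy between $\nu'\circ h$ and $v\circ\nu$, and glue this homotopy to $v$ to obtain the map on mapping cylinders.

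For uniqueness the arguments diverge. The paper invokes an external lemma (from the companion paper) to produce a self-homotopy-equivalence of $M(\nu')$ fixing $Y'$ that intertwines the two candidate maps. Your direct obstruction-theoretic argument is more self-contained and is the standard way one proves such statements for aspherical targets. One small point worth tightening: the phrase ``the $\pi_1$-level obstruction vanishes by hypothesis on the induced maps'' glosses over the fact that this step involves \emph{choosing} paths over the $0$-cells of $M(\nu)\setminus Y$ so that the resulting $\pi_1$-valued obstruction cochain on $1$-cells vanishes; this choice is possible precisely because $\phi_0$ and $\phi_1$ induce the same homomorphism on $\pi_1$. The cleanest way to make this precise is to pass to universal covers: both maps lift to $\pi$-equivariant maps $\widetilde{M(\nu)}\to\widetilde{M(\nu')}$ agreeing on $\widetilde{Y}$, and since the target is contractible one builds a $\pi$-equivariant homotopy rel $\widetilde{Y}$ cell by cell with all obstructions in trivial groups. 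Your final compatibility check with the CW case is handled exactly as in the paper, via the uniqueness clause of Lemma~\ref{lem:Cofibre}.
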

\begin{proof}
	This follows directly from the property that (based) maps into $B\pi'$ are determined up to (based) homotopy by the induced map on fundamental groups and that $M(\nu')$ is again a model for $B\pi'$. We give more details for the reader's convenience.
	
Realise $u \colon \pi \to \pi'$ by a map $v \colon  B\pi \to B\pi'$, so that $v \circ \nu \simeq \nu' \circ h$.
It follows that $Y\xrightarrow{h}Y'\hookrightarrow M(\nu')$ is homotopic to~$g\colon Y\xrightarrow{\nu}B\pi\xrightarrow{v}B\pi'\hookrightarrow M(\nu')$.
Choosing a homotopy~$H \colon Y \times I \to M(\nu')$ between these maps leads to a map extending $h$ that induces $u$ on fundamental groups, namely 
$$(u,h):=g\cup H\colon 
M(\nu)\to M(\nu').$$
 It remains to show uniqueness.	
	Let $g,g'\colon M(\nu)\to M(\nu')$ be maps that extend $h$ and induce $u$ on fundamental groups
and let $H,H'$ be defined defined as above.
We show that~$(u,h):=g \cup H$ and~$(u,h)':=g' \cup H'$ are homotopic rel.~$Y$.
Since~$M(\nu')$ is~$3$-coconnected,  and the restriction~$(u,h)|_Y=g|_Y=h=g'|_Y=(u,h)'|_Y \colon Y \to Y' \subset M(\nu')$ is a homotopy equivalence,~\cite[Lemma 5.3]{ConwayKasprowski4Manifolds}
ensures that there is a homotopy equivalence~$\phi \colon M(\nu') \to M(\nu')$ that is the identity on~$Y'$ and such that~$(u,h)' \simeq \phi \circ (u,h)$.

When both $Y$ and $Y'$ are CW complexes and $\nu$ and $\nu'$ are cellular, it follows from Lemma~\ref{lem:Cofibre} that $(u,h)_*$ is chain homotopic to the map from Lemma~\ref{lem:InducedMapkInvariants}.
\end{proof}

For a pair $(X,Y)$ that is homotopy equivalent to a CW pair, the following lemma will make it possible to define $k_{X,Y}$ by pulling back the relative $k$-invariant of any CW pair that is homotopy equivalent to $(X,Y)$.

\begin{lemma}
	\label{lem:k-inv-well-defined}
Let $(X,Y)$ be a pair of spaces,  let $(f_i,h_i) \colon (X_i',Y_i') \to (X,Y)$ be a homotopy equivalence with  $(X_i',Y_i')$ a CW pair, and set $u_i:=(f_i)_*$ for $i=0,1$.
Fix a map $\nu \colon X \to B\pi_1(X)$ and a cellular map $\nu_i' \colon X_i' \to B\pi_1(X_i')$ for $i=0.1$.
For $i=0,1$, the composition
\[H^3(M(\nu_i'),Y_i';\pi_2(X_i'))\xleftarrow{(u_i,h_i)^*,\cong}H^3(M(\nu),Y;\Res_{u_i^{-1}}\pi_2(X'_i))\xrightarrow{(f_i)_*,\cong}H^3(M(\nu),Y;\pi_2(X))\]
 takes $k_{X_0',Y_0'}^{\nu_0'}$ and $k_{X_1',Y_1'}^{\nu_1'}$ to the same element in $H^3(M(\nu),Y;\pi_2(X))$.
\end{lemma}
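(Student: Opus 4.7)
The plan is to reduce the equality to Theorem~\ref{thm:GoalRelativekCW} applied to a composition of the given homotopy equivalences. Fix a homotopy inverse $(\bar f_1,\bar h_1)\colon (X,Y)\to (X_1',Y_1')$ of $(f_1,h_1)$ and form the homotopy equivalence of CW pairs
$$(g,k):=(\bar f_1\circ f_0,\,\bar h_1\circ h_0)\colon (X_0',Y_0')\to (X_1',Y_1'),$$
inducing $v:=g_*=u_1^{-1}\circ u_0$ on $\pi_1$ and $G:=g_*=((f_1)_*)^{-1}\circ(f_0)_*$ on $\pi_2$. Since $g$ itself witnesses the existence of a compatible map on Postnikov $2$-types, \cref{thm:GoalRelativekCW} yields
$$G_*(k_{X_0',Y_0'}^{\nu_0'})=(v,k)^*(k_{X_1',Y_1'}^{\nu_1'})\in H^3(M(\nu_0'|_{Y_0'}),Y_0';\Res_v \pi_2(X_1')).$$

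I would then apply $((u_0,h_0)^*)^{-1}$ to both sides and exploit two naturality facts. First, the coefficient push-forward $G_*$ commutes with the geometric pullback $((u_0,h_0)^*)^{-1}$, since coefficient maps and geometric pullbacks commute on cohomology; tracking the coefficient modules uses the identity $\Res_{u_0^{-1}}\Res_v \pi_2(X_1')=\Res_{u_1^{-1}}\pi_2(X_1')$, which is immediate from $v=u_1^{-1}u_0$. Second, I claim
$$(v,k)^*=(u_0,h_0)^*\circ ((u_1,h_1)^*)^{-1}.$$
This follows from the uniqueness part of \cref{lem:InducedMapkInvariantsNotCW}: picking a homotopy inverse $\overline{(u_1,h_1)}\colon M(\nu)\to M(\nu_1')$ of the homotopy equivalence $(u_1,h_1)$, the composite $\overline{(u_1,h_1)}\circ (u_0,h_0)\colon M(\nu_0')\to M(\nu_1')$ extends $\bar h_1\circ h_0=k$ and induces $v$ on fundamental groups, hence is homotopic rel $Y_0'$ to any representative of $(v,k)$.

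Combining these identities, the equality from \cref{thm:GoalRelativekCW} becomes
$$G_*\bigl(((u_0,h_0)^*)^{-1}(k_{X_0',Y_0'}^{\nu_0'})\bigr)=((u_1,h_1)^*)^{-1}(k_{X_1',Y_1'}^{\nu_1'})$$
in $H^3(M(\nu),Y;\Res_{u_1^{-1}}\pi_2(X_1'))$. Applying the coefficient push-forward $(f_1)_*\colon \Res_{u_1^{-1}}\pi_2(X_1')\to \pi_2(X)$ to both sides and using the tautological identity $(f_1)_*\circ G=(f_0)_*$ on underlying abelian groups then yields the desired equality in $H^3(M(\nu),Y;\pi_2(X))$. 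The main obstacle I anticipate is the careful bookkeeping of the restriction-of-scalars coefficient modules at each stage and verifying the factorisation $(v,k)^*=(u_0,h_0)^*\circ((u_1,h_1)^*)^{-1}$ with consistent coefficient systems; once this is set up, the rest is formal naturality.
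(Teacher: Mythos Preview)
Your proof is correct and follows essentially the same route as the paper: both compose the two CW approximations to get a homotopy equivalence $(g,k)\colon (X_0',Y_0')\to(X_1',Y_1')$, invoke \cref{thm:GoalRelativekCW} for this pair, and then unwind using naturality of coefficient push-forwards and the factorisation $(v,k)^*=(u_0,h_0)^*\circ((u_1,h_1)^*)^{-1}$. One minor point worth tightening: a homotopy inverse $\overline{(u_1,h_1)}$ need not restrict to $\bar h_1$ on the nose, so your composite only extends $k$ up to homotopy on $Y_0'$; this is harmless (use that $Y_0'\hookrightarrow M(\nu_0')$ is a cofibration, or argue instead that $(u_1,h_1)\circ(v,k)$ and $(u_0,h_0)$ agree on cohomology via the uniqueness in \cref{lem:InducedMapkInvariantsNotCW}), and the paper sidesteps it by packaging the same identities into a commutative diagram.
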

\begin{proof}
Note that our choice of model for $B\pi_1(X)$ and $B\pi_1(X_i')$ ensures that on fundamental groups, ~$(\nu|_Y)_* \circ (h_i)_*= u_i \circ (\nu_i'|_{Y_i'})_*$ for $i=0,1$.
Consequently, the maps $(u_i,h_i)$ from Lemma~\ref{lem:InducedMapkInvariantsNotCW} are defined.
Consider the following diagram:
\[
\xymatrix{
H^3(M(\nu_0'|_{Y_0'}),Y_0';\pi_2(X_0'))\ar[d]_-{(f_0)_*}&\ar[l]_-{(u_0,h_0)^*}H^3(M(\nu|_Y),Y;\Res_{u_0^{-1}}\pi_2(X_0'))\ar[rdd]^{(f_0)_*}\ar[d]^{(f_0)_*}& \\
%%%%
H^3(M(\nu_0'|_{Y_0'}),Y_0';\Res_{u_0}\pi_2(X))\ar[d]_{(f_1)_*^{-1}}&H^3(M(\nu|_Y),Y;\pi_2(X))\ar[d]^{(f_1)_*^{-1}} & \\
%%%%
H^3(M(\nu_0'|_{Y_0'}),Y_0';\Res_{u_1^{-1} \circ u_0}\pi_2(X_1'))&\ar[l]_-{(u_0,h_0)^*}
H^3(M(\nu|_Y),Y,\Res_{u_1^{-1}}\pi_2(X_1'))\ar[r]^-{(f_1)_*}& 
H^3(M(\nu|_Y),Y;\pi_2(X)). \\
%%%%
H^3(M(\nu|_Y),Y;\Res_{u_1^{-1}}\pi_2(X_1'))\ar[d]_-{(u_1,h_1)^*}\ar[u]^-{(u_0,h_0)^*}\ar[dr]^{=}&& \\
H^3(M(\nu_1'|_{Y_1'}),Y_1';\pi_2(X_1'))&\ar[l]_-{(u_1,h_1)^*}H^3(M(\nu|_Y),Y;\Res_{u_1^{-1}}\pi_2(X_1'))\ar[ruu]_{(f_1)_*}\ar[uu]^-=&
}
\]
The commutativity of the triangles and of the lower square are immediate.
The commutativity of the top square follows from the naturality of the maps from Lemma~\ref{lem:InducedMapkInvariantsNotCW}.

Choose homotopy inverses $\overline{f}_1$ of $f_1$ and $\overline{h}_1$ of $h_1$ respectively. 
Set $f':=\overline{f}_1 \circ f_0,h':=\overline{h}_1 \circ h_0$ and $u':=u_1^{-1} \circ u_0$.
Note that $(u',h')^* \circ (u_1,h_1)^*=(u_0,h_0)^*$ and $(f')_*=(f_1)^{-1}\circ (f_0)_*$, whence
\begin{align*} (f_0)_* \circ (u_0,h_0)^{-*}(k_{X_0',Y_0'}^{\nu_0'})
&=(f_1)_* \circ (u_1, h_1)^{-*}\circ(u_1,h_1)^*\circ(u_0,h_0)^{-*}\circ(f_1)_*^{-1}\circ(f_0)_* (k_{X_0',Y_0'}^{\nu_0'}) \\
&=(f_1)_* \circ (u_1, h_1)^{-*}\circ(u',h')^{-*}\circ(f')_*(k_{X_0',Y_0'}^{\nu_0'}) \\
&=(f_1)_* \circ (u_1, h_1)^{-*}(k_{X_1',Y_1'}^{\nu_1'}).
\end{align*}
Here,  the first equality uses the commutativity of the diagram above, the second follows from the definition of $f',h',u'$,  and the last relies on Theorem~\ref{thm:GoalRelativekCW}.
\end{proof}

We can now define the $k$-invariant of a pair of spaces that is homotopy equivalent to a CW pair. \cref{lem:k-inv-well-defined} ensures that this is independent of the choice of homotopy equivalence.

\begin{notation}
\label{not:kinvNotCW}
Let~$(X,Y)$ be a pair of spaces that is homotopic to a CW pair,  say via a homotopy equivalence $(f,g)\colon (X',Y')\to (X,Y)$ with~$(X',Y')$ a CW pair. 
Set~$u:=(f_*)\colon \pi_1(X')\to\pi_1(X)$.
Given a map~$\nu \colon X \to B\pi_1(X)$ and a cellular map~$\nu' \colon X' \to B\pi_1(X')$,
define the \emph{relative~$k$-invariant}
$$k_{X,Y}^\nu \in H^3(M(\nu|_Y),Y;\pi_2(X))$$
 to be the image of~$k_{X',Y'}^{\nu'}$ under the composition
	\[H^3(M(\nu'|_{Y'}),Y';\pi_2(X'))\xleftarrow{(u,g)^*,\cong}H^3(M(\nu|_Y),Y;\pi_2(X'))\xrightarrow{f_*,\cong}H^3(M(\nu|_Y),Y;\pi_2(X)).\]
\end{notation}

\begin{remark}
\label{rem:Indepj}
Given another~$\widetilde{\nu} \colon X \to B\pi_1(X)$, there is a $k$-invariant preserving isomorphism
$$H^3(M(\widetilde{\nu}|_Y),Y;\pi_2(X)) \to H^3(M(\nu|_Y),Y;\pi_2(X)),$$
namely the unique dashed map that makes the following diagram commute
\[
\xymatrix{
H^3(M(\nu'|_{Y'}),Y';\pi_2(X'))\ar[r]^{(u,g)^{-*}}_\cong \ar[d]^{(\id_\pi,\id_Y)^*}_\cong
&H^3(M(\nu|_Y),Y;\pi_2(X'))\ar[r]^{f_*}_\cong
&H^3(M(\nu|_Y),Y;\pi_2(X)) \ar@{-->}[d]\\	
%%%%%
H^3(M(\widetilde{\nu}'|_{Y'}),Y';\pi_2(X'))\ar[r]^{(u,g)^{-*}}_\cong 
&H^3(M(\widetilde{\nu}|_Y),Y;\pi_2(X'))\ar[r]^{f_*}_\cong
&H^3(M(\widetilde{\nu}|_Y),Y;\pi_2(X)).
}
\]
\end{remark}
This remark leads to the following definition.

\begin{definition}
\label{def:kinvNotCW}
The \emph{relative $k$-invariant} of a pair~$(X,Y)$ that is homotopy equivalent to a~CW pair refers to 
$$k_{X,Y}^\nu \in  H^3(M(\nu|_Y),Y;\pi_2(X))$$
for any choice of~$\nu \colon X \to B\pi_1(X)$.
\end{definition}

We defer a discussion of the invariance of $k_{X,Y}^\nu$ to Remark~\ref{rem:Invariance}. Instead,  we begin with a generalisation  of \cref{thm:GoalRelativekCW} from CW pairs to pairs that are homotopy equivalent to~CW pairs.
\begin{theorem}
	\label{thm:GoalRelativek-spaces}
	Let~$(X_0,Y_0)$ and~$(X_1,Y_1)$ be pairs of spaces that are homotopy equivalent to CW pairs.
	For an isomorphism~$u\colon \pi_1(X_0)\to \pi_1(X_1)$, a map~$h \colon Y_0 \to Y_1$ with $u\circ(\iota_0)_*= (\iota_{Y_1})_* \circ h$ and a $u$-equivariant homomorphism~$F \colon \pi_2(X_0) \to \pi_2(X_1)$,  the following assertions are equivalent:
	\begin{itemize}
		\item there is a map~$g \colon P_2(X_0) \to P_2(X_1)$ between the Postnikov $2$-types that induces $u$ and $F$ on $\pi_1$ and~$\pi_2$, respectively, and such that the following maps are homotopic
		 \[Y_0\hookrightarrow X_0\to P_2(X_0)\xrightarrow{g}P_2(X_1)\qquad\text{and}\qquad Y_1\xrightarrow{h}Y_1\hookrightarrow X_1\to P_2(X_1);\]
		\item the relative $k$-invariants satisfy
		$$(u,h)^*(k_{X_1,Y_1}^{\nu_1})=F_*(k_{X_0,Y_0}^{\nu_0}) \in H^3(M(\nu_0|_{Y_0}),Y_0;\Res_u \pi_2(X_1)),$$
		for every $\nu_0 \colon X_0 \to B\pi_1(X_0)$ and $\nu_1 \colon X_1 \to B\pi_1(X_1)$.
	\end{itemize}
\end{theorem}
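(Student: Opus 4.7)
The plan is to reduce to the CW case treated in \cref{thm:GoalRelativekCW} by transporting all the data through CW approximations of $(X_0,Y_0)$ and $(X_1,Y_1)$.

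First I would choose CW pairs $(X_i',Y_i')$ and homotopy equivalences $(f_i,h_i) \colon (X_i',Y_i') \to (X_i,Y_i)$ for $i=0,1$, together with a homotopy inverse $(\overline f_1,\overline h_1)$ of $(f_1,h_1)$. Setting
\[ u' := (f_1)_*^{-1} \circ u \circ (f_0)_*, \qquad F' := (f_1)_*^{-1} \circ F \circ (f_0)_*, \qquad h' := \overline h_1 \circ h \circ h_0, \]
a short verification using $u \circ (\iota_0)_* = (\iota_1)_* \circ h_*$ shows that $u' \circ (\iota_0')_* = (\iota_1')_* \circ h'_*$ and that $F'$ is $u'$-equivariant. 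Thus $(u',h',F')$ is valid input for \cref{thm:GoalRelativekCW} applied to the CW pairs $(X_0',Y_0')$ and $(X_1',Y_1')$.

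Next I would match the two bullets of the statement with those of \cref{thm:GoalRelativekCW}. For the first bullet, since the maps $f_i \colon X_i' \to X_i$ induce homotopy equivalences on Postnikov $2$-types, a map $g \colon P_2(X_0) \to P_2(X_1)$ as in the theorem exists if and only if a corresponding map $g' \colon P_2(X_0') \to P_2(X_1')$ inducing $u'$ and $F'$ (and satisfying the analogous homotopy condition via $h'$) exists. For the second bullet, I would unpack \cref{not:kinvNotCW}, which expresses each $k_{X_i,Y_i}^{\nu_i}$ as the image of $k_{X_i',Y_i'}^{\nu_i'}$ under an explicit composition of isomorphisms. Writing out both sides of the equation $(u,h)^*(k_{X_1,Y_1}^{\nu_1}) = F_*(k_{X_0,Y_0}^{\nu_0})$ as such images and invoking the naturality of the pullback maps from \cref{lem:InducedMapkInvariants,lem:InducedMapkInvariantsNotCW}, the equation reduces to the corresponding equation $(u',h')^*(k_{X_1',Y_1'}^{\nu_1'}) = F'_*(k_{X_0',Y_0'}^{\nu_0'})$ for the CW pairs.

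Once both bullets have been translated to the CW setting, \cref{thm:GoalRelativekCW} concludes the argument. The main obstacle will be the diagram chase in the previous paragraph, analogous to the one in the proof of \cref{lem:k-inv-well-defined}: one must carefully track how $(u,h)^*$ and $F_*$ interact with the isomorphisms induced by the CW approximations $(f_i,h_i)$ in order to confirm that the $k$-invariant equations in the non-CW and CW settings are genuinely equivalent.
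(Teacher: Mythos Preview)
Your proposal is correct and follows essentially the same approach as the paper: choose CW approximations $(f_i,g_i)\colon (X_i',Y_i')\to (X_i,Y_i)$, transport $(u,h,F)$ to data $(u',h',F')$ for the CW pairs, and then verify (via the naturality diagram chase you describe, which the paper carries out as a separate Claim) that both bullets translate to the corresponding statements for $(X_i',Y_i')$, so that \cref{thm:GoalRelativekCW} finishes the argument. The paper organises the two reductions in the opposite order (first the $k$-invariant equation, then the existence of $g$), but the content is the same.
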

\begin{proof}
For $i=0,1$, choose a homotopy equivalence~$(f_i,g_i)\colon (X_i',Y_i')\to (X_i,Y_i)$ with~$(X_i',Y_i')$ a CW pair. The homotopy equivalences~$f_i$ induce homotopy equivalences~$P_2(f_i)\colon P_2(X_i')\to P_2(X_i)$. 
For $i=0,1$, fix a map~$\nu_i \colon X_i \to B\pi_1(X_i)$ and a cellular map~$\nu_i' \colon X_i' \to B\pi_1(X_i')$.
Additionally, choose a homotopy inverse $g_0^{-1}$ of $g_0$,  and consider 
\begin{align*}
u'&:=u_1^{-1} \circ u\circ u_0\colon \pi_1(X_0')\to \pi_1(X_1'), \\
h'&:=g_1^{-1}\circ h\circ g_0, \colon Y_0' \to Y_1',   \\
F'&:=(f_1)_*^{-1}\circ F\circ (f_0)_*\colon \pi_2(X_0')\to \pi_2(X_1').
\end{align*}
\begin{claim}
The following equality holds
	\[(u,h)^*(k_{X_1,Y_1}^{\nu_1})=F_*(k_{X_0,Y_0}^{\nu_0}) \in H^3(M(\nu_0|_{Y_0}),Y_0;\Res_u \pi_2(X_1))\]
if and only if the following equality holds:
	\[(u',h')^*(k_{X'_1,Y'_1}^{\nu_1'})=F'_*(k_{X_0',Y_0'}^{\nu_0'}) \in H^3(M(\nu_0'|_{Y_0'}),Y'_0;\Res_{u'}\pi_2(X'_1)).\]
	\end{claim}
	\begin{proof}
	Consider the following diagram:
	\[
\xymatrix{
H^3(M(\nu_0'|_{Y_0'}),Y_0';\pi_2(X_0')) \ar[d]_-{F'_*}
&H^3(M(\nu_0|_{Y_0}),Y_0;\Res_{u_0^{-1}}\pi_2(X_0'))  \ar[d]_-{F'_*} \ar[r]^-{(f_0)_*}   \ar[l]_-{(u_0,g_0)^*}
&H^3(M(\nu_0|_{Y_0}),Y_0;\pi_2(X_0))   \ar[d]_-{F_*}\\
%%%%%%
H^3(M(\nu_0'|_{Y_0'}),Y_0';\Res_{u'}\pi_2(X_0'))
&H^3(M(\nu_0|_{Y_0}),Y_0;\Res_{u_1^{-1} \circ u}\pi_2(X_1'))  \ar[l]_-{(u_0,g_0)^*} \ar[r]^-{(f_1)_*}
&H^3(M(\nu_0|_{Y_0}),Y_0;\Res_u\pi_2(X_1)) \\
%%%%%%
H^3(M(\nu_1'|_{Y_1'}),Y_1';\pi_2(X_1')) \ar[u]^-{(u',h')^*}
&H^3(M(\nu_1|_{Y_1}),Y_1;\Res_{u_1^{-1}}\pi_2(X_1'))  \ar[l]_-{(u_1,g_1)^*}\ar[r]^-{(f_1)_*} \ar[u]^-{(u,h)^*}
&H^3(M(\nu_1|_{Y_1}),Y_1;\pi_2(X_1)). \ar[u]^-{(u,h)^*}
}
\]
The upper left square commutes by naturality of $(u_0,g_0)^*$, the upper right square commutes by definition of $F'$, the lower right square commutes by naturality of $(u,h)^*$ and the lower left square can be seen to commute using the uniqueness properties of Lemma~\ref{lem:Cofibre}.
The claim now follows from the commutativity of the entirety of this diagram together with the fact that all the arrows involved are isomorphisms.
	\end{proof}
	
	Thanks to the claim and to \cref{thm:GoalRelativekCW},  the equality $(u,h)^*(k_{X_1,Y_1}^{\nu_1})=F_*(k_{X_0,Y_0}^{\nu_0}) $ is equivalent to the equality~$(u',h')^*(k_{X'_1,Y'_1}^{\nu_1'})=F'_*(k_{X_0',Y_0'}^{\nu_0'})$ which is in turn equivalent to the existence of a map~$g' \colon P_2(X_0') \to P_2(X_1')$ that induces~$u'$ and~$F'$ and such that
$Y_0'\hookrightarrow X_0'\to P_2(X_0')\xrightarrow{g'} P_2(X_1')$ and $Y_0'\xrightarrow{h}Y_1'\hookrightarrow X_1'\to P_2(X_1')$ are homotopic.
Finally, this is equivalent to the existence of a map $g \colon P_2(X_0) \to P_2(X_1)$ that induces~$u$ and~$F$ and such that~$Y_0\to X_0\to P_2(X_0)\xrightarrow{g} P_2(X_1)$ is homotopic to~$Y_0\xrightarrow{h}Y_1\to X_1\to P_2(X_1)$, as can be seen by the following (homotopy) commutative diagram:
$$
\xymatrix{
Y_0 \ar[r]\ar@/_3pc/[ddd]^{h'}&X_0\ar[r]&P_2(X_0) \ar@{-->}@/^3pc/[ddd]_{g} \\
Y_0' \ar[u]^{g_0} \ar[d]_{h}\ar[r]&X_0'\ar[r]\ar[u]^{f_0}&P_2(X_0')\ar[u]^-{P_2(f_0)} \ar@{-->}@/^1pc/[d]_{g'}\\
Y_1' \ar[r] \ar[d]_{g_1}&X_1'\ar[r]\ar[d]_{f_1}&P_2(X_1')\ar[d]_-{P_2(f_1)} \\
Y_1\ar[r]&X_1\ar[r]&P_2(X_1).
}
$$
This completes the proof of the theorem.
\end{proof}

\begin{remark}
\label{rem:Invariance}
We briefly discuss the sense in which the relative~$k$-invariant is an invariant of the pair~$(X,Y)$ up to homotopy equivalences that restrict to the identity on $Y$.
If  $(X,Y)$ and $(X',Y)$ are pairs of spaces that are homotopy equivalent to CW pairs and~$f \colon X \to X'$ is a homotopy equivalence that restricts to the identity on $Y$,  then Theorem~\ref{thm:GoalRelativek-spaces} shows~$(f_*,\id)^*(k_{X',Y}^{\nu'})=f_*(k_{X,Y}^{\nu})$ for every $\nu\colon X\to B\pi_1(X)$ and $\nu'\colon X\to B\pi_1(X')$.
Thus, if one uses~$f_*$ to identify~$\pi_i(X)$ with~$\pi_i(X')$ for $i=1,2$, as is common for the usual (absolute)~$k$-invariant,  and chooses $\nu$ and $\nu'$ such that~$\nu|_Y=\nu'|_Y$,  then the relative~$k$-invariants of~$(X,Y)$ and~$(X',Y)$ agree.
\end{remark}

We can now prove Theorem~\ref{thm:RealiseAlgebraic3Type-intro} from the introduction.

\begin{customthm}{\ref{thm:RealiseAlgebraic3Type-intro}}
	Let~$(X_0,Y_0)$ and~$(X_1,Y_1)$ be pairs of spaces that are homotopy equivalent to~CW pairs, and let $c_1\colon X_1\to P_2(X_1)$ be the Postnikov $2$-type of $X_1$.
For a map~$h \colon Y_0 \to Y_1$, an isomorphism~$u\colon \pi_1(X_0)\to \pi_1(X_1)$ with $u\circ(\iota_0)_*= (\iota_1)_* \circ h$, and a $u$-equivariant homomorphism~$F \colon \pi_2(X_0) \to~\pi_2(X_1)$, the following assertions are equivalent:
	\begin{itemize}
		\item there is a map~$c_0 \colon X_0\to P_2(X_1)$ such that 
		\[(c_0)_*=u,
		\quad c_1 \circ h\simeq c_0|_{Y_0},  \quad \text{and} \quad 
(c_0)_*=F
		;\]
		\item the relative $k$-invariants satisfy
		$$(u,h)^*(k_{X_1,Y_1}^{\nu_1})=F_*(k_{X_0,Y_0}^{\nu_0}) \in H^3(M(\nu_0|_{Y_0}),Y_0;\Res_u \pi_2(X_1))$$
for every $\nu_0 \colon X_0 \to B\pi_1(X_0)$ and $\nu_1 \colon X_1 \to B\pi_1(X_1)$.
	\end{itemize}
	\end{customthm}
		\begin{proof}
Let $c_0' \colon X_0 \to P_2(X_0)$ be the Postnikov $2$-type of $X_0$, and fix a map~$\nu_i \colon X_i \to B\pi_1(X_i)$ for~$i=0,1$.
If the equality~$(u,h)^*(k_{X_1,Y_1}^{\nu_1})=F_*(k_{X_0,Y_0}^{\nu_0}) \in H^3(M(\nu_0|_{Y_0}),Y_0;\Res_u \pi_2(X_1))$ holds,  then by Theorem~\ref{thm:GoalRelativek-spaces} there is a map~$g \colon P_2(X_0) \to P_2(X_1)$ that satisfies~$c_1\circ h\simeq g\circ c_0'|_Y$, and induces~$u$ and~$F$ (meaning that~$(c_1)_*^{-1} \circ g_* \circ (c_0')_*=F$).
Setting~$c_0:=g \circ c_0'$,  we obtain the desired map.

We prove the converse.
Let~$c_0 \colon X_0 \to P_2(X_1)$ be as in the theorem. 
Then~$c_0$ factors through~$P_2(X_0)$, i.e.\ there is a~$g\colon P_2(X_0)\to P_2(X_1)$ with~$c_0\simeq g\circ c_0'$: indeed~$P_2(X_0)$ can be obtained from~$X_0$ by attaching cells of dimension~$n \geq 4$,  and~$\pi_k(P_2(X_1))=0$ for~$k\geq~3$.
			The map~$g$ still induces~$u$ on $\pi_1$, and~$F$ on $\pi_2$. Furthermore,~$g\circ c_0'|_{Y_0}\simeq c_0|_{Y_0} \simeq c_1\circ h$.
			Again by Theorem~\ref{thm:GoalRelativek-spaces},  it follows that~$(u,h)^*(k_{X_1,Y_1}^{\nu_1})=F_*(k_{X_0,Y_0}^{\nu_0}) \in H^3(M(\nu_0|_{Y_0}),Y_0;\Res_u\pi_2(X_1))$.
		\end{proof}
	
We record the following immediate consequence that will be of use in~\cite{ConwayKasprowski4Manifolds}.
	
	\begin{corollary}
		Let $(X,Y)$ be a pair of spaces that is homotopy equivalent to a CW pair, and write~$i \colon Y\to X$ for the inclusion. 
Then,  for every $\nu \colon X \to B\pi_1(X)$,  the following equality holds:
$$(\id,i)^*k_{X,X}^\nu=k_{X,Y}^{\nu}\in H^3(M(\nu|_Y),Y;\pi_2(X)).$$
	\end{corollary}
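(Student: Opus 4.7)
The plan is to deduce the corollary directly from \cref{thm:RealiseAlgebraic3Type-intro} by checking that the obvious candidate map realises the desired data. Specifically, I would apply the theorem with $(X_0,Y_0):=(X,Y)$ and $(X_1,Y_1):=(X,X)$, with $h:=i\colon Y\to X$, with $u:=\id_{\pi_1(X)}$, and with $F:=\id_{\pi_2(X)}$. The compatibility hypothesis $u\circ (\iota_0)_*=(\iota_1)_*\circ h_*$ reduces to $i_*=i_*$, and $F$ is trivially $u$-equivariant, so the theorem applies.

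For the ``existence of $c_0$'' bullet, one takes $c_0:=c_1\colon X\to P_2(X)$, namely the Postnikov $2$-type map of $X$ itself (playing the role of $X_1$). Then $(c_0)_*=\id$ on $\pi_1$ and $\pi_2$ by definition of a Postnikov $2$-type, and the restriction to $Y$ satisfies $c_0|_Y=c_1\circ i=c_1\circ h$ on the nose, hence certainly up to homotopy. Thus the first bullet of \cref{thm:RealiseAlgebraic3Type-intro} is satisfied, so the second bullet holds and yields
\[(\id,i)^*(k_{X,X}^{\nu})=\id_*(k_{X,Y}^{\nu})=k_{X,Y}^{\nu}\in H^3(M(\nu|_Y),Y;\pi_2(X)),\]
where I have used the same choice $\nu_0=\nu_1=\nu\colon X\to B\pi_1(X)$ on both sides. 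There is no real obstacle here: the whole content is packaging the tautological choice $c_0=c_1$ into the equivalence already established in \cref{thm:RealiseAlgebraic3Type-intro}.
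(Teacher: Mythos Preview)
Your proposal is correct and follows exactly the same approach as the paper's proof: the paper also applies \cref{thm:RealiseAlgebraic3Type-intro} with $(X_0,Y_0)=(X,Y)$, $(X_1,Y_1)=(X,X)$, $u=\id_{\pi_1(X)}$, $h=i$, $F=\id_{\pi_2(X)}$, and $c_0=c=c_1$. There is nothing to add.
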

	\begin{proof}
Let $c\colon X\to P_2(X)$ be the Postnikov $2$-type of $X$.
The corollary is obtained by applying \cref{thm:RealiseAlgebraic3Type-intro} to the pairs~$(X_0,Y_0)=(X,Y)$, $(X_1,Y_1)=(X,X)$,  with~$u=\id_{\pi_1(X)}$, $h=i$, $F=\id_{\pi_2(X)}$, and~$c_0=c=c_1$.
	\end{proof}
	
	As mentioned in the introduction, the following corollary shows that the relative $k$-invariant is indeed the obstruction for a section~$B\pi_1(X)\to P_2(X)$ extending~$Y \hookrightarrow X \to P_2(X)$.
	\begin{corollary}
		\label{lem:k-obstruction}
		Let $(X,Y)$ be a pair of spaces that is homotopy equivalent to a CW pair,  and let~$c\colon X\to P_2(X)$ be the Postnikov $2$-type of $X$.
For any choice of $\nu \colon X \to B\pi_1(X)$, the relative~$k$-invariant~$k_{X,Y}^\nu$ vanishes  if and only if the dashed map in the following diagram exists and both triangles commute up to homotopy:
		\[\begin{tikzcd}
			Y\ar[r,"c|_Y"]\ar[d,"\nu|_Y"']&P_2(X)\ar[d]\\
			B\pi_1(X)\ar[r,"\id"]\ar[ur,dashed]&B\pi_1(X).
		\end{tikzcd}\]
	\end{corollary}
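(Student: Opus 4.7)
The plan is to deduce the corollary by applying \cref{thm:RealiseAlgebraic3Type-intro} to a carefully chosen pair. Specifically, I would take $(X_0,Y_0):=(M(\nu|_Y),Y)$, which is homotopy equivalent to a CW pair and realises (a model of) $B\pi_1(X)$ together with $Y$ as a subspace, and $(X_1,Y_1):=(X,Y)$. For the data I choose $u:=\id_{\pi_1(X)}$ (using the canonical isomorphism $\pi_1(M(\nu|_Y))\cong \pi_1(X)$), $h:=\id_Y$, and $F\colon \pi_2(M(\nu|_Y))=0\to \pi_2(X)$ the zero homomorphism; the compatibility $u\circ (\iota_0)_*=(\iota_1)_*\circ h_*$ is immediate. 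For the reference maps I take $\nu_0\colon M(\nu|_Y)\to B\pi_1(X)$ to be the standard deformation retraction and $\nu_1:=\nu$, so that $M(\nu_0|_Y)=M(\nu|_Y)=M(\nu_1|_Y)$.

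Next, I would verify that with these choices the $k$-invariant equation in \cref{thm:RealiseAlgebraic3Type-intro} reduces precisely to the statement $k_{X,Y}^\nu=0$. The right-hand side $F_*(k_{M(\nu|_Y),Y}^{\nu_0})$ vanishes automatically since $F=0$ (equivalently, since $M(\nu|_Y)$ is aspherical and thus the coefficient module $\pi_2(M(\nu|_Y))$ is trivial). The left-hand side is $(\id,\id_Y)^*(k_{X,Y}^\nu)$, where $(\id,\id_Y)^*$ is by \cref{lem:InducedMapkInvariantsNotCW} induced by the unique-up-to-homotopy map $M(\nu|_Y)\to M(\nu|_Y)$ that restricts to $\id_Y$ and induces $\id$ on $\pi_1$, namely the identity. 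Hence $(\id,\id_Y)^*$ is the identity on $H^3(M(\nu|_Y),Y;\pi_2(X))$.

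Finally, I would translate the existence of a map $c_0\colon M(\nu|_Y)\to P_2(X)$ produced by \cref{thm:RealiseAlgebraic3Type-intro} into the existence of the dashed map. In one direction, given $c_0$, I would define the dashed map as $s:=c_0\circ j$, where $j\colon B\pi_1(X)\hookrightarrow M(\nu|_Y)$ is the canonical inclusion. The condition $c|_Y\simeq c_0|_Y$, combined with the observation that the composition $Y\hookrightarrow M(\nu|_Y)\xrightarrow{r}B\pi_1(X)$ equals $\nu|_Y$, yields the upper triangle $s\circ \nu|_Y\simeq c|_Y$; the condition that $c_0$ induces $\id$ on $\pi_1$, combined with the asphericity of $B\pi_1(X)$ and the fact that $P_2(X)\to B\pi_1(X)$ also induces $\id$ on $\pi_1$, forces the composition $P_2(X)\to B\pi_1(X)$ followed by $s$ to be homotopic to $\id$, giving the lower triangle. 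Conversely, from a dashed section $s$ making both triangles commute, I would construct $c_0$ by pasting $s\circ r$ with $c|_Y$ along the given homotopy $s\circ \nu|_Y\simeq c|_Y$, obtaining a map $M(\nu|_Y)\to P_2(X)$ with the properties required by the theorem.

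The main obstacle, though minor, is the explicit verification that $(\id,\id_Y)^*$ acts as the identity on the relevant cohomology group; every other step is a direct translation once $M(\nu|_Y)$ is used as the model of $B\pi_1(X)$ containing $Y$ as a subspace.
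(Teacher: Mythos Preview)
Your proposal is correct and follows exactly the same approach as the paper: apply \cref{thm:RealiseAlgebraic3Type-intro} with $(X_0,Y_0)=(M(\nu|_Y),Y)$, $(X_1,Y_1)=(X,Y)$, $u=\id$, $h=\id_Y$, $F=0$, and then translate between the map $c_0\colon M(\nu|_Y)\to P_2(X)$ and the dashed section using the homotopy equivalence $M(\nu|_Y)\simeq B\pi_1(X)$. You supply considerably more detail than the paper (in particular the verification that $(\id,\id_Y)^*$ is the identity and the explicit construction of $s$ from $c_0$ and conversely), which is all fine; the only slip is a wording reversal in the lower-triangle check---you want $s$ followed by $P_2(X)\to B\pi_1(X)$, not the other way around.
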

	\begin{proof}
Apply \cref{thm:RealiseAlgebraic3Type-intro} with~$(X_1,Y_1)=(X,Y)$,~$(X_0,Y_0)=(M(\nu|_Y),Y)$,~$u=\id_{\pi_1(X)}$,~$h=\id_Y$, and~$F=0$.
		If follows that $k_{X,Y}^\nu$ vanishes if and only if there exists a map~$c_0 \colon M(\nu|_Y)\to P_2(X)$ such that $c_0|_Y\simeq c|_Y$ and~$(c_0)_*=\id_{\pi_1(X)}$. 
These are precisely the conditions that the two triangles in the diagram commute up to homotopy but with~$M(\nu|_Y)$ in place of~$B\pi_1(X)$.
Since $\nu|_Y$ factors as~$Y \to B\pi_1(X) \simeq M(\nu|_Y)$, the result follows.
	\end{proof}

\section{Additional properties of relative $k$-invariants.}
\label{sec:AdditionalProperties}

This final section collects additional properties of relative $k$-invariants that we require in~\cite{ConwayKasprowski4Manifolds}.

\begin{lemma}
\label{lem:kInvariantGoesToZeroPair}
Let $(X,Y)$ be a pair of spaces that is homotopy equivalent to a CW pair, and fix a map~$\nu \colon X \to B\pi_1(X)$.
Write~$j \colon X \to (X,Y)$ for the inclusion.
If the induced map~$\pi_1(Y) \to \pi_1(X)$ is surjective, then
$$ j_*(k_{X,Y}^\nu)=0 \in H^3(M(\nu|_Y),Y;H_2(\widetilde{X},\widetilde{Y})). $$
\end{lemma}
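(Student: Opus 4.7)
The plan is to reduce to a carefully chosen CW model, exploit the surjectivity hypothesis to pick the chain-level data $(\alpha,\phi)$ from \cref{lem:existence-alpha} with $\phi=0$, and then exhibit an explicit cobounding at the chain level. Using the independence of $k_{X,Y}^\nu$ under CW replacement (\cref{not:kinvNotCW}), I would first replace $(X,Y)$ by a CW pair. The surjectivity of $\pi_1(Y)\to\pi_1(X)$ allows me to build a model of $B\pi := B\pi_1(X)$ containing $Y$ as a subcomplex and obtained from $Y$ by attaching cells of dimension $\geq 2$; in particular $B\pi^{(1)} = Y^{(1)}$. Pick $\nu \colon X\to B\pi$ cellular and extending the inclusion $Y\hookrightarrow B\pi$, and set $K_* := C_*(\widetilde X)$, $L_* := C_*(\widetilde Y)$, $C_*^\pi := C_*(\widetilde{B\pi})$, and $t := \nu_*$. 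Then $t_L \colon L_*\hookrightarrow C_*^\pi$ is the inclusion of a subcomplex and is an isomorphism in degrees $\leq 1$.

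Next, I would construct $(\alpha,\phi)$ trivially: in degrees $0,1$ set $\alpha_i := (i_L)_i$ via the identification $C_i^\pi = L_i$; in degree $2$ set $\alpha_2|_{L_2} := (i_L)_2$, and on a free complement $N_2$ of $L_2$ inside $C_2^\pi$, extend $\alpha_2$ by lifting $(i_L)_1 \circ d_2^\pi|_{N_2}$ through $d_2^K$---this is possible because $d_2^\pi|_{N_2}$ lands in $C_1^\pi = L_1 \subseteq Z_1(K)$ and $H_1(\widetilde X) = 0$. These choices give $\alpha \circ t_L = i_L$ on the nose, so $\phi = 0$ is valid in \cref{lem:existence-alpha}. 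By \cref{cons:Relativek}, the cocycle $\theta_{K,L}^t = (\alpha_2 \circ d_3^\pi,\,0) \colon C_3^\pi \oplus L_2 \to H_2(K)$ represents $k_{K,L}^t$.

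Finally, the coefficient change $j_* \colon H_2(K) \to H_2(K/L) = H_2(\widetilde X, \widetilde Y)$ converts $\theta_{K,L}^t$ into $(c,\ell)\mapsto [\alpha_2 d_3^\pi(c)]_{K/L}$. I would define $\gamma \in \Hom_{\Z[\pi]}(\operatorname{Cone}(t_L)_2, H_2(K/L))$ by $\gamma(c',\ell_1) := ([\alpha_2(c')]_{K/L},\, 0)$; this is well-defined because $d_2^K \alpha_2(c') = (i_L)_1 d_2^\pi(c') \in L_1$, so $\alpha_2(c')$ is a relative $2$-cycle. A direct computation then gives
\[
(\delta\gamma)(c,\ell) = [\alpha_2\, d_3^\pi(c) + \alpha_2\, (t_L)_2(\ell)]_{K/L} = [\alpha_2\, d_3^\pi(c)]_{K/L},
\]
since $\alpha_2 (t_L)_2(\ell) = (i_L)_2(\ell) \in L_2$ vanishes in $K/L$; hence $j_*(k_{K,L}^t) = 0$. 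Naturality of the isomorphism $\phi_{t,\nu}^*$ from \cref{lem:Identifying-targets} in the coefficient module then promotes this to $j_*(k_{X,Y}^\nu) = 0$. The main work is the CW-modelling step: once $Y$ is realized as a subcomplex of $B\pi$ with only higher-dimensional cells outside, the subsequent chain-level cancellation is essentially forced.
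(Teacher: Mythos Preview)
Your argument is correct and takes a genuinely different route from the paper's. The paper uses the surjectivity of $\pi_1(Y)\to\pi_1(X)$ to replace $X$ by a model obtained from $Y$ by attaching cells of dimension~$\geq 2$ (so $X^{(1)}=Y^{(1)}$), keeps an arbitrary pair $(\alpha,\phi)$ together with the homotopy $D\colon \alpha\circ t_L\simeq i_L+\phi$, and then shows that $j_*(\theta_{K,L}^t)$ is cobounded by the cochain $(\alpha_2,D_1)$ pushed to $H_2(\widetilde X,\widetilde Y^{(1)})$. You instead use the surjectivity to build $B\pi$ from $Y$ by attaching cells of dimension~$\geq 2$ (so $B\pi^{(1)}=Y^{(1)}$), and exploit the resulting identification $C_i^\pi=L_i$ for $i\leq 1$ to choose $\alpha$ with $\alpha\circ t_L=i_L$ exactly, whence $\phi=0$ and the cobounding cochain is simply $[\alpha_2(-)]_{K/L}$ with no homotopy term. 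Your approach is slightly cleaner at the chain level; the paper's approach avoids committing to a special model of $B\pi$ and works with generic $(\alpha,\phi)$. One small imprecision: the inclusion $L_1\subseteq Z_1(K)$ you assert is false in general; what you actually need (and what holds) is that $d_2^\pi(N_2)\subseteq Z_1(C^\pi)=Z_1(L)$, so that $(i_L)_1\circ d_2^\pi|_{N_2}$ lands in $Z_1(K)=B_1(K)$ and can be lifted through~$d_2^K$.
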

\begin{proof}
We first prove the result for CW complexes,  and then for pairs that are homotopy equivalent to CW complexes.
Let $(X,Y)$ be a CW pair,  choose an augmentation preserving chain map~$ t \colon C_*(\widetilde{X}) \to C^{\pi_1(X)}$ and write $t_Y$ for its restriction to~$C_*(\widetilde{Y})$.
Using
$$\theta_{C(\widetilde{X}),C(\widetilde{Y})}^t \colon C_3^\pi \oplus C_2(\widetilde{Y}) \to H_2(\widetilde{X})$$
to denote a chain representative of~$k_{C(\widetilde{X}),C(\widetilde{Y})}^t \in H^3(\operatorname{Cone}(t_Y),H_2(\widetilde{X}))$, we will show that~$j_*(\theta_{C(\widetilde{X}),C(\widetilde{Y})}^t)$ is a coboundary.
Write~$i \colon C_*(\widetilde{Y}) \to C_*(\widetilde{X})$ for the inclusion
and~$\alpha \colon C_*^\pi|_{[0,2]} \to C_*(\widetilde{X})|_{[0,2]}$ for the chain map underlying the definition of $\theta_{C(\widetilde{X}),C(\widetilde{Y})}^t$.
Recall from Lemma~\ref{lem:existence-alpha} that there is a chain homotopy~$\alpha \circ t \circ i \simeq i+\phi \colon C_*(\widetilde{Y})|_{[0,2]} \to C_*(\widetilde{X})|_{[0,2]}$, say via~$\{D_k \colon C_k(\widetilde{Y}) \to C_{k+1}(\widetilde{X}) \}_{k \leq 2}$.
In particular, we have
\begin{equation}
	\label{eq:kInvariantGoesToZeroPair}
	\alpha_2 \circ t_2 \circ i_2-i_2-\phi=d_3^X \circ D_2+D_1 \circ d_2^Y.
\end{equation}
Since~$\pi_1(Y)\to \pi_1(X)=:\pi$ is surjective, we can furthermore assume that~$X$ is obtained from~$Y$ by attaching cells of dimension~$\geq 2$. 
To see this, attach
$2$-cells to $Y$ to make $\pi_1(Y) \to \pi_1(X)$ a~$\pi_1$-isomorphism, 
then wedge on $S^2$'s to make it $2$-connected, and continue this process inductively to obtain a~CW complex $X'$ obtained from $Y$ by adding cells of dimension $\geq 2$ together with a homotopy equivalence $X' \to X$ that is the identity on $Y$.
This replacement is possible because the relative $k$-invariant is invariant under homotopy equivalences that are the identity on $Y$; recall Remark~\ref{rem:Invariance}.
Since $X^{(1)}=Y^{(1)}$, we have~$H_2(\widetilde{X},\widetilde{Y}^{(1)})= \frac{C_2(\widetilde{X},\widetilde{Y}^{(1)})}{\im(d_3^X)}$
and consider the diagram
$$
\xymatrix{
	C_3^\pi \oplus C_2(\widetilde{Y}) \ar[d]_{\bsm d_2^\pi& t_2 \circ i_2 \\ 0 & -d_2^Y\esm}
	\ar[r]^-{\bsm \alpha_2\circ d_3^\pi &\phi \esm}\ar@/^{2.5pc}/[rrr]^{\theta_{C(\widetilde{X}),C(\widetilde{Y})}^t}& 
	 Z_2(\widetilde{X})
	\ar[rr]^{\proj} &&H_2(\widetilde{X})
	\ar[r]^{j_*}& H_2(\widetilde{X},\widetilde{Y}) \\
	%%%%%%%%%%%
	%%%%%%%%%%%%
	C_2^\pi \oplus C_1(\widetilde{Y})\ar@/_2pc/[rrrr]_{f}
	\ar[r]^{\bsm \alpha_2&D_1 \esm}& C_2(\widetilde{X})
	\ar[r]^-{j_*}& C_2(\widetilde{X},\widetilde{Y}^{(1)})
	\ar[r]^{\proj}& \frac{C_2(\widetilde{X},\widetilde{Y}^{(1)})}{\im(d_3^X)}
	\ar[r]^-{=}& H_2(\widetilde{X},\widetilde{Y}^{(1)}). 
	\ar[u]
}
$$
This diagram commutes: using~\eqref{eq:kInvariantGoesToZeroPair}, a pair $(a,b) \in C_3^\pi \oplus C_2(\widetilde{Y})$ satisfies 
\begin{align*}
	f \circ \begin{pmatrix} d_3^\pi &t_2 \circ i_2 \\ 0&-d_2^Y \end{pmatrix}\begin{pmatrix}
		a \\ b
	\end{pmatrix}
	&= j_*(\alpha_2 \circ d_3^\pi(a)+\alpha_2 \circ t_2 \circ i_2(b)-D_1 \circ d_2^Y(b)) \\
	&= j_*(\alpha_2 \circ d_3^\pi(a)+d_3^X \circ D_2(b)+i_2(b)+\phi(b)) \\
	&\equiv  j_*(\alpha_2 \circ d_3^\pi(a)+\phi(b)) \\
	&=j_* \circ \theta_{C(\widetilde{X}),C(\widetilde{Y})}^t\begin{pmatrix}
		a \\ b
	\end{pmatrix}.
\end{align*}
This proves that $j_*(\theta_{C(\widetilde{X}),C(\widetilde{Y})}^t)$ is a coboundary,  so that
$$j_*(k_{C(\widetilde{X}),C(\widetilde{Y})}^t)=0 \in H^3(\operatorname{Cone}(t_Y),H_2(\widetilde{X},\widetilde{Y})).$$
We can now conclude the proof of the lemma when $(X,Y)$ is  a CW pair.
Choosing a map $\nu \colon X \to B\pi_1(X)$,  the naturality of the map $\phi_{t,\nu}^*$ from Lemma~\ref{lem:Identifying-targets} implies that 
$$j_*(k_{X,Y}^\nu)=j_* \circ \phi_{t,\nu}^*(k_{C(\widetilde{X}),C(\widetilde{Y})}^t)=\phi_{t,\nu}^* \circ j_*(k_{C(\widetilde{X}),C(\widetilde{Y})}^t)=0.$$
Finally, we assume that $(X,Y)$ is a pair of spaces that is homotopy equivalent to a pair of CW complexes $(X',Y')$, say via $(f,g) \colon (X',Y') \to (X,Y)$.
Setting $u:=f_*$,  and using the naturality of the map~$(u,g)^*$ (from Lemma~\ref{lem:InducedMapkInvariantsNotCW}), one now verifies that $j_*'(k_{X',Y'}^{\nu'})=0$ implies~$j_*(k_{X,Y}^\nu)=0$.
This concludes the proof of the lemma.
\end{proof}

\begin{lemma}
	\label{lem:Xi}
	Let $X$ be a space,  let $\nu \colon X \to B\pi_1(X)$ be  a map, set $\pi:=\pi_1(X)$, and given a left~$\Z[\pi]$-module $A$,  consider the group homomorphism
\begin{align*}
\Phi\colon \Hom_{\Z[\pi]}(H_2(\widetilde{X}),A)&\to H^3(M(\nu),X;A) \\
	\varphi &\mapsto \varphi_*(k_{X,X}^\nu).
	\end{align*}
Then, writing $\delta$ for the connecting homomorphism in the exact sequence of the pair $(M(\nu),X)$, we have
$$\Phi\circ \ev=\delta\colon H^2(X;A)\to H^3(M(\nu),X;A).$$
\end{lemma}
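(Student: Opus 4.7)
The plan is to reduce to the case where $X$ is a CW complex and then carry out a direct chain-level computation. For the reduction, choose a homotopy equivalence $(f,f) \colon (X',X') \to (X,X)$ with $X'$ a CW complex: the definition of $k^\nu_{X,X}$ in \cref{not:kinvNotCW} is engineered to pull back from $k^{\nu'}_{X',X'}$, while $\delta$ is natural by the functoriality of the long exact sequence of a pair. Both sides of the desired equality therefore pull back correctly, so it suffices to treat the CW case, which I assume henceforth (with $\nu$ cellular).

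To make the chain-level identifications as transparent as possible, I would choose $C_*^\pi = C_*(\widetilde{B\pi})$ and $t = \widetilde{\nu}_*$, which is augmentation-preserving. With these choices, the map $\phi_{t,\nu}^\varphi$ of \cref{lem:Identifying-targets} can be taken to be the identity on $C_*(\widetilde{M}(\nu),\widetilde{X}) \cong \operatorname{Cone}(\widetilde{\nu}_*)$, so a chain-level representative of $k^\nu_{X,X}$ on $C_3(\widetilde{M}(\nu),\widetilde{X}) = C_3(\widetilde{B\pi}) \oplus C_2(\widetilde{X})$ is the cocycle $\theta(b,c) = [\alpha_2\, d_3^{B\pi}(b) + \phi(c)]$ provided by \cref{lem:existence-alpha}, where $\alpha \circ \widetilde{\nu}_* \simeq \id + \phi$ via some chain homotopy $D_\bullet$.

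Given a cocycle $f \in Z^2(X;A) = \Hom_{\Z[\pi]}(C_2(\widetilde{X}), A)$, the evaluation $\ev(f)$ sends $[z] \in H_2(\widetilde{X})$ to $f(z)$, so $\Phi(\ev f)$ is represented by the cocycle $(b,c) \mapsto f\alpha_2 d_3^{B\pi}(b) + f\phi(c)$. On the other hand, extending $f$ by zero along the canonical decomposition of $C_2(\widetilde{M}(\nu))$ and applying the boundary shows that $\delta f$ is represented by $(b,c) \mapsto \epsilon \cdot f(c)$ for a sign $\epsilon$ arising from the identification $C_*(M(\nu),X) \cong \operatorname{Cone}(\widetilde{\nu}_*)$.

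The final step is to exhibit a cochain $g \in C^2(M(\nu),X;A)$ satisfying $\delta g = \Phi(\ev f) - \delta f$. I would take $g(b',c') := f\alpha_2(b') + \epsilon\, fD_1(c')$, where $D_1 \colon C_1(\widetilde{X}) \to C_2(\widetilde{X})$ is the degree-$1$ component of $D_\bullet$. Using the cone differential $d^Q(b,c) = (d^{B\pi}b + \widetilde{\nu}_* c,\, -d^X c)$, the chain homotopy identity $\alpha_2\widetilde{\nu}_*(c) = c + \phi(c) + d^X D_2(c) + D_1 d^X(c)$, and the cocycle condition $f \circ d^X = 0$, the equality $\delta g = \Phi(\ev f) - \delta f$ follows by direct expansion. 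The main bookkeeping obstacle is pinning down the sign $\epsilon$ in the identification $C_*(M(\nu),X) \cong \operatorname{Cone}(\widetilde{\nu}_*)$; once this is fixed, the verification is straightforward.
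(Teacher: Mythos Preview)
Your proposal is correct and follows essentially the same approach as the paper: both reduce to the CW case via naturality, choose $C_*^\pi = C_*(\widetilde{B\pi})$ with $t=\widetilde{\nu}_*$ so that $\phi_{t,\nu}$ is the identity, and then carry out a direct chain-level computation showing that $\Phi(\ev f)$ and $\delta f$ differ by a coboundary. Your explicit cochain $g=(f\alpha_2,\epsilon fD_1)$ simply packages into a single step what the paper does in two (first replacing $\varphi_2$ by the cohomologous $\varphi_2\circ\alpha_2\circ t_2-\varphi_2\circ\phi$ using $D$, then adding the $\operatorname{Cone}$-coboundary of $(\varphi_2\circ\alpha_2,0)$), and the sign $\epsilon$ you flag is indeed the only bookkeeping left.
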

\begin{proof}
We begin with the case where~$X$ is a CW complex and~$\nu$ is cellular.
In what follows, we consider~$C_*^\pi:=C_*(\widetilde{B}\pi)$ as a free resolution of~$\Z$,  pick a map $\nu \colon X \to B\pi$,  and consider the chain map~$t:= \nu_*   \colon C_*(\widetilde{X}) \to C_*^\pi$.
We deduce that the map~$\phi_{t,\nu}^{\varphi} \colon C_*(\widetilde{M}(\nu),\widetilde{X}) \to \operatorname{Cone}(t)_*$ from Lemma~\ref{lem:Identifying-targets} can be taken to be the identity.
It follows that~$k_{C(\widetilde{X}),C(\widetilde{X})}^t=k_{X,X}^\nu$.

	Use Lemma~\ref{lem:existence-alpha} to construct maps~$\alpha \colon C_*^\pi|_{[0,2]} \to C_*(\widetilde{X})|_{[0,2]}$ and $\phi \colon L_2 \to \ker(d_2^{X})$ that satisfy~$\alpha \circ t \simeq \id+\phi$ as maps on~$C_*(\widetilde{X})|_{[0,2]}.$
	As in the definition of~$k$-invariant,  consider the cocycle
	$$
	\theta_{C(\widetilde{X}),C(\widetilde{X})}^t \colon \operatorname{Cone}(t)_3=C_3^\pi\oplus C_2(\widetilde{X})\xrightarrow{(\alpha_2\circ d_3^\pi,\phi)}Z_2(\widetilde{X})\to H_2(\widetilde{X}).$$	
	We verify that~$\Phi \circ \ev=\delta$.
%------------------------------
Since $\nu$ is cellular, we can identify $C_*(\widetilde{M}(\nu))$ %$C_*(\widetilde{M}(\nu),\widetilde{X})$ 
with the algebraic mapping cylinder of the chain map~$t=\nu_*   \colon C_*(\widetilde{X}) \to C_*^\pi$,  say~$C_*(\widetilde{M}(\nu))=C_*^\pi \oplus C_*(\widetilde{X}) \oplus C_{*-1}(\widetilde{X}).$
Under this identification,  and recalling that $C_*(\widetilde{M}(\nu),\widetilde{X}) = \operatorname{Cone}(t)_*$ can be obtained from the algebraic mapping cylinder by modding out the middle summand, we see that the image of a class~$[\varphi_2 \colon C_2(\widetilde{X}) \to A] \in H^2(X;A)$ under the connecting map~$\delta \colon Z_2(X;A) \to Z_3(M(\nu),X;A)$ is
$$\delta(\varphi_2)=(0,-\varphi_2) \colon C_2(\widetilde{M}(\nu),\widetilde{X}) = \operatorname{Cone}(t)_2\to A.$$
Since~$\alpha \circ t \simeq \id+\phi$ as maps on~$C_*(\widetilde{X})|_{[0,2]}$,  we have~$[\varphi_2]=[\varphi_2 \circ \alpha_2 \circ t_2-\varphi_2 \circ \phi]$.
We also note that since~$\varphi_2 \in C^2(X,A)$ is a cocycle, the map $\varphi_2 \colon C_2(\widetilde{X}) \to A$ descends to $H_2(\widetilde{X})$.
It follows that
\begin{align*}
\delta([\varphi_2])
&=[(0,\varphi_2)]
=[(0,\varphi_2 \circ \alpha_2 \circ t_2-\varphi_2 \circ \phi)]
=[(0,\varphi_2 \circ \alpha_2 \circ t_2-\varphi_2 \circ \phi)]
+[\delta_3^{\operatorname{Cone}(t)}(\varphi_2 \circ \alpha_2,0)] \\
%%%%
&=(\varphi_2 \circ \alpha_2 \circ d_3^\pi, -\varphi_2 \circ \phi)
=[\varphi_2 \circ \theta_{C(\widetilde{X}),C(\widetilde{X})}^t ].
\end{align*}
	The definition of 
	 $\Phi$ now yields
	\[\delta([\varphi_2])
	=[\varphi_2 \circ \theta_{C(\widetilde{X}),C(\widetilde{X})}^t ]
	=[\ev ([\varphi_2]) \circ \theta_{C(\widetilde{X}),C(\widetilde{X})}^t]
	=\Phi \circ \ev ([\varphi_2]).\]
We now assume that $X$ is a homotopy equivalent to a CW complex, say via a map $f \colon X' \to X$.
Set $u:=f_*$, choose a map~$\nu' \colon X' \to B\pi_1(X')$ and consider the following diagram:
$$
\xymatrix@C1.5cm{
\Hom(H_2(\widetilde{X}),A)\ar[r]^-{(f_*)^*,\cong}\ar@/_4pc/[dd]_{\Phi}& \Hom(H_2(\widetilde{X}')), \Res_u  A) \ar@/^4pc/[dd]^{\Phi'} \\
H^2(X;A) \ar[d]^-{\delta}  \ar[u]_-{\ev} \ar[r]^{f^*,\cong}& H^2(X';\Res_u A) \ar[d]^-{\delta'}  \ar[u]_-{\ev'}  \\
H^3(M(\nu),X;A) \ar[r]^{f^*,\cong}& H^3(M(\nu'),X';\Res_u A).
}
$$
Since $X'$ is a CW complex, $\Phi' \circ\ev' \circ=\delta'$ and, by naturality, the two rectangles commute.
A rapid verification shows that $f^* \circ \Phi =\Phi' \circ (f_*)^*$.
Since the $f^*$ are isomorphisms, a short diagram chase shows that this  implies $\Phi \circ \ev=\delta$, as required.
\end{proof}

\bibliographystyle{alpha}
\def\MR#1{}
\bibliography{BiblioHomotopyBoundary}
\end{document}